\numberwithin{equation}{section}
\renewcommand{\Bbb}{\mathbb}
\theoremstyle{plain}
\newtheorem{theorem}{Theorem}[section]
\newtheorem{corollary}[theorem]{Corollary}
\newtheorem{proposition}[theorem]{Proposition}
\newtheorem{lemma}[theorem]{Lemma}
\theoremstyle{definition}
\newtheorem{definition}[theorem]{Definition}
\newtheorem{remark}[theorem]{Remark}
\newtheorem{example}[theorem]{Example}
\newcommand\restr[2]{{
  \left.\kern-\nulldelimiterspace 
  #1 
  \right|_{#2} 
}}
\newcommand{\R}{\mathbb{R}}
\renewcommand{\d}{\mathrm{d}}
\newcommand{\Cinfty}{\mathscr{C}^\infty}
\newcommand{\T}{\mathrm{T}}
\newcommand{\cT}{\mathrm{T}^\ast}
\newcommand{\Id}{\mathrm{Id}}
\newcommand{\Lie}{\mathscr{L}}
\newcommand{\X}{\mathfrak{X}}
\newcommand{\Xh}{\X_{\mathrm{ham}}}
\newcommand{\bomega}{\bm{\omega}}
\newcommand{\g}{\mathfrak{g}}
\DeclareMathOperator{\Ad}{\mathrm{Ad}}
\newcommand{\bfX}{\mathbf{X}}
\newcommand{\bfY}{\mathbf{Y}}
\newcommand{\bfJ}{\mathbf{J}}
\newcommand{\parder}[2]{\frac{\partial #1}{\partial #2}}
\newcommand{\tparder}[2]{\partial #1/\partial #2}
\DeclareMathOperator{\inn}{\iota}
\DeclareMathOperator{\Ham}{Ham}
\DeclareMathOperator{\rk}{rank}
\DeclareMathAlphabet{\mathpzc}{OT1}{pzc}{m}{it}
\def\d{\mathrm{d}}
\def\om{\omega}
\def\wtl#1{\widetilde{#1}}
\def\B#1{\mathbf{#1}}
\def\x{\times}
\def\>{\rangle}
\def\<{\langle}
\def\Lg{\mathfrak{g}}
\let\emph\textbf
\title{{\sffamily On $k$-polycosymplectic Marsden--Weinstein reductions}}
\author{{\sffamily 
$^a$Javier de Lucas%
\thanks{e-mail:
   javier.de.lucas@fuw.edu.pl \ ORCID: 0000-0001-8643-144X}\ ,\
$^b$Xavier Rivas%
\thanks{e-mail:
   xavier.rivas@unir.net \ ORCID: 0000-0002-4175-5157}\ ,\
$^c$Silvia Vilariño%
\thanks{e-mail:
   silviavf@unizar.es \ ORCID: 0000-0003-0404-1427}\ \  and,\
$^a$Bartosz M. Zawora%
\thanks{e-mail:
   b.zawora@uw.edu.pl \ ORCID: 0000-0003-4160-1411}\
}
\\[1ex]
\normalsize\itshape\sffamily
$^a$UW Institute for Advanced Studies,\\
\normalsize\itshape\sffamily
Department of Mathematical Methods in Physics, University of Warsaw, Warszawa, Poland.
\\[1ex]
\normalsize\itshape\sffamily
$^b$Escuela Superior de Ingenier\'{\i}a y Tecnolog\'{\i}a,\\
\normalsize\itshape\sffamily
Universidad Internacional de La Rioja, Logro\~no, Spain.
\\[1ex]
\normalsize\itshape\sffamily
$^c$Centro Universitario de la Defensa de Zaragoza,
\\
\normalsize\itshape\sffamily
Instituto Universitario de Matemáticas y Aplicaciones, Zaragoza, Spain.
\\[1ex]
}
\date{{\sffamily \today}}
\begin{document}

\maketitle

\begin{abstract}
    We review and slightly improve the known  $k$-polysymplectic Marsden--Weinstein reduction theory by removing some technical conditions on $k$-polysymplectic momentum maps  by developing a theory of affine Lie group actions for $k$-polysymplectic momentum maps, removing the necessity of their co-adjoint equivariance. Then, we focus on the analysis of a particular case of $k$-polysymplectic manifolds, the so-called fibred ones, and we study their $k$-polysymplectic  Marsden--Weinstein reductions. Previous results allow us to devise a  $k$-polycosymplectic Marsden--Weinstein reduction theory, which represents one of our main results. Our findings are applied to study coupled vibrating strings and, more generally, $k$-polycosymplectic Hamiltonian systems with field symmetries. We show that $k$-polycosymplectic geometry can be understood as a particular type of $k$-polysymplectic geometry. Finally, a $k$-cosymplectic to $\ell$-cosymplectic geometric reduction theory is presented, which reduces, geometrically, the space-time variables in a $k$-cosymplectic framework. An application of this latter result to a vibrating membrane with symmetries is given.

\end{abstract}

\noindent\textbf{Keywords:} 
non-autonomous Hamiltonian formalism, classical field theory,
$k$-polycosymplectic manifold, $k$-polysymplectic manifold, Marsden--Weinstein reduction, momentum map, co-adjoint equivariance, affine Lie group action.
\bigskip

\noindent\textit{{\bf MSC 2020:} 53C15, 53Z05, 70H33 (primary) 35A30, 35B06 (secondary)}
\bigskip

{\setcounter{tocdepth}{2}
\def\baselinestretch{1}
\small
\def\addvspace#1{\vskip 1pt}
\parskip 0pt plus 0.1mm
\tableofcontents
}



\section{Introduction}

The problem of the reduction of systems with symmetry has attracted for decades the interest of mathematicians and theoretical physicists, who have sought to reduce the number of equations describing the behaviour of the systems by finding first-integrals or conservation laws \cite{MW01,OR04}. 

The general procedure of the symplectic reduction can be traced back to E. Cartan, and it goes as follows (see \cite[pg. 298]{AM78} or \cite{MW01,OR04} and references therein):
\begin{quote}
    ``Suppose that $P$ is a manifold and $\omega$ is a closed two-form on $P$; let $\ker \omega = \{v \in \T P \mid \inn_v\omega = 0\}$ be the {\it characteristic distribution} of $\omega$ and call $\omega$ {\it regular} if $\ker \omega$ is a subbundle of $\T P$. In the regular case, $\ker \omega$ is an involutive distribution. By the Fr\"obenius theorem, $\ker \omega$ is integrable and it defines a foliation $\mathcal{F}$ on $P$. This gives rise to a quotient space $P/\mathcal{F}$ by identification of all points on the same leaf of $\mathcal{F}$. Assume now that $P/\mathcal{F}$ is a manifold, the canonical projection $x\in P \mapsto [x]\in P/\mathcal{F}$ being a submersion. Then, the tangent space at a point $[x]$ is isomorphic to $\T_xP/ \ker \omega_x$ and $\omega$ projects onto a well-defined closed, nondegenerate two-form on $P/\mathcal{F}$; that is, $P/\mathcal{F}$ is a symplectic manifold: a so-called {\it reduced space}.''
\end{quote}

The use of geometrical methods has proved to be a powerful tool in the study of this topic and a breakthrough was performed by Marsden and Weinstein in their  work on the reduction of autonomous Hamiltonian systems on symplectic manifolds under the action of a Lie group of symmetries, with regular values of their momentum maps \cite{MW74}. Just before that work, Meyer obtained some similar findings \cite{Me73}, but not as detailed and sound as Weinstein and Marsden's ones \cite{MW01}. In a more general context, all their results were indeed the culmination of many other previous achievements by Smale, Sternberg, Kostant, Robbin, and many others, who gave partial but relevant approaches to the reduction procedure (see \cite{MW01}). In their famous work \cite{MW74}, Marsden and Weinstein apply a very powerful version of the previous reduction scheme to the case of submanifolds defined by the level sets of an equivariant momentum mapping ${\bf J}^\Phi:P\rightarrow \mathfrak{g}^*$ for a certain Lie group action on the dual $\mathfrak{g}^*$ of a Lie algebra $\mathfrak{g}$ and a Hamiltonian action on a symplectic manifold $P$ leaving invariant a certain function on $P$: the Hamiltonian. The so-obtained reduced space is a symplectic manifold and inherits a Hamiltonian dynamics from the initial Hamiltonian.

After Marsden and Weinstein's work, the Marsden--Weinstein reduction technique was subsequently applied and generalised to many different situations. For instance, the reduction of Hamiltonian systems with singular values of the momentum map has been studied in several papers such as \cite{SL91} for the autonomous case. In that work, stratified reduced manifolds admitting symplectic structures are obtained. The so-called orbifolds can also appear as Marsden--Weinstein reductions, which have motivated a separate research topic with physical and mathematical applications \cite{GB86,HSS15,LMS93}.
The reduction of time-dependent regular Hamiltonian systems (with regular values) is developed
in the framework of cosymplectic manifolds in \cite{Al89,LS93}, obtaining a reduced phase space that is a cosymplectic manifold. The study of autonomous systems coming from certain kinds of singular Lagrangians can be found in \cite{CANTRIJN1986353}, where the conditions for the reduced phase space to inherit an almost-tangent structure are given. Another approach to this question is adopted in \cite{MRSV15}, where the authors give conditions for the existence of a regular Lagrangian function in the reduced phase space, which allows them to construct the reduced cosymplectic or contact structure (and hence the reduced Hamiltonian function) from it.

It is worth noting that there have been many generalisations of the Marsden--Weinstein reduction to deal with many types of geometric structures. The Marsden--Weinstein reduction was extended by Marsden and Ratiu to Poisson manifolds in \cite{MR86}, the case of locally conformally symplectic manifolds was developed in \cite{HR01}, while the reduction of Dirac structures was solved and further analysed in several papers \cite{CF06,CFZ10b,Co90}. On the other hand, the Marsden--Weinstein reduction of Jacobi manifolds is analysed in \cite{ILM97}. 

After almost fifty years of their foundational work \cite{MW74}, the development of Marsden--Weinstein reductions for different types of geometries and schemes of reduction is a very active research field as results can be extended to many geometric structures, it admits many modifications and generalisations, e.g. for the so-called singular cases \cite{LS93,JRS01}, and it has lots of applications as illustrated by the many works constantly using them  \cite{BG15,BT21,GB86,HSS15,LMS93,SS22,Wa23}. It is worth noting that the development of a multisymplectic Marsden--Weinstein reduction has been an open problem for decades now \cite{EMR18}, and there is a lot of interest in developing even partial results \cite{Bl21}.


Geometric covariant descriptions of first-order classical field theories can be performed by appropriate generalisations of some of the above-mentioned structures. The simplest one is {\it $k$-symplectic geometry}, introduced by A. Awane \cite{Aw92,Aw00} and used later by M. de León et al. \cite{dLe88,dLe93,DeLeo1997}, and L. K. Norris \cite{Nor2000,Nor1993} to describe first-order classical field theories. They coincide with the {\it polysymplectic structures} described by G. C. Günther \cite{Gu87} (although these last ones are different from those introduced by G. Sardanashvily et al. \cite{Gia1997,Sar1995} and I. V. Kanatchikov \cite{Kan1998}, that are also called {\it polysymplectic}). This structure is applied to first-order regular autonomous field theories \cite{Bua2015,Ech1996,Rom2007}. A natural extension of the above structures are {\it $k$-cosymplectic manifolds}, which allow us to generalise the cosymplectic description of non-autonomous mechanical systems \cite{AM78} to regular field theories whose Lagrangian and/or Hamiltonian functions, in the local description, depend on the space-time coordinates \cite{LM98,DeLeo2001}. We refer to \cite{LSV16,Gra2004,MLM10} for more details on the $k$-symplectic and $k$-cosymplectic formalisms. In \cite{RRSV_2011}, the relationships that exist between $k$-symplectic, $k$-cosymplectic and multisymplectic structures are established. 

The present work is part of an ambitious research project in which we want to develop the Marsden--Weinstein multisymplectic reduction. In this paper, we are focused on the extension of the Marsden--Weinstein reduction theory to the case of $k$-polycosymplectic manifolds \cite{BG15,LSV16}. The search for a $k$-polycosymplectic reduction can be traced back to \cite{Bl89}, where a particular case was studied. Next, some ideas of how a $k$-polycosymplectic reduction should be were pointed out in \cite{MRSV10}, but no proofs were given. Some of the ideas in \cite{MRSV10} gave rise to \cite{MC11}, where no $k$-polycosymplectic reduction was studied, and \cite{MRSV15}, where the $k$-polysymplectic reduction was fully described instead. An initial approach to $k$-polysymplectic reduction was made in the seminal work of G\"unther \cite{Gu87}, where the author attempts to reply to the Marsden--Weinstein reduction theory for symplectic manifolds to the $k$-polysymplectic case. Nevertheless, the proof of one of the fundamental results fails to be true as a consequence of the technical properties of the orthogonal $k$-polysymplectic complement. In \cite{MRSV15} this problem was solved. It is worth noting that \cite{MRSV15} provided new relevant ideas and solutions that were not depicted in \cite{MRSV10} so as to develop a Marsden--Weinstein $k$-polysymplectic reduction. Nowadays, more than a decade after \cite{MRSV10}, the Marsden--Weinstein $k$-polycosymplectic reduction is still to be developed. Achieving this reduction and studying its properties and applications is the main aim of this work, which has been drawing some attention until the present day.

This work will show how some of the ideas of \cite{MRSV10}, along with other ones in \cite{MRSV15} and new others to be disclosed hereafter, allow for devising a $k$-polycosymplectic Marsden--Weinstein reduction. In particular, a $k$-polycosymplectic manifold will be associated with a $k$-polysymplectic manifold of a larger dimension and a specific type, a so-called $k$-{\it polysymplectic fibred manifold}, to be defined and studied in this work for the first time. In particular, $k$-polysymplectic fibred manifolds admit, among other properties, the hereafter referred to as $k$-{\it polysymplectic Reeb vector fields}. A slight generalisation of the $k$-polysymplectic Marsden--Weinstein reduction developed in \cite{MRSV15} will be applied to $k$-polysymplectic fibred manifolds obtained from $k$-polycosymplectic manifolds to obtain reduced $k$-polysymplectic fibred manifolds. The latter will be related to $k$-polycosymplectic manifolds that will be identified with the Marsden--Weinstein reductions of the initial $k$-polycosymplectic manifolds. In particular, the general scheme for our $k$-polycosymplectic reduction to be explained in detail in this paper is displayed in Figure \ref{Fig::Scheme}. It is worth noting that $k$-polysymplectic manifolds and $k$-polycosymplectic manifolds, in the way defined in this work, do not need to admit Darboux coordinates \cite{PhDThesisXRG}, which makes many proofs of our paper more technical and complicated than other generalisations of the Marsden--Weinstein symplectic reduction. Most main results describing our $k$-polycosymplectic Marsden--Weinstein reduction are described in Section \ref{Sec::PolycoReduction}, while Theorem \ref{Th::kpolycoreduction} is one of the most important results concerning this topic. Note that Theorem \ref{Th::PolyCoReductionDynamics} describes the reduction of Hamiltonian $k$-polycosymplectic vector fields, which is related to the HDW (Hamilton--De Donder--Weyl) equations for the $k$-polycosymplectic formalism. 
\begin{figure}\label{Fig::Scheme}
    \centering
\begin{center}
    \begin{tikzcd}
        (\R^k\times M,\widetilde{\bm\omega}={\rm pr}_M^*{\bm \omega}+\d {\bm u}\barwedge {\rm pr}_M^*{\bm \tau}) \arrow[rr,"\mathrm{pr}_M"] 
        &
        &
        (M,\bm\tau,\bm\omega)  
        \\
        \\
        (\widetilde{\bf J}^{\Phi-1}(\bm\mu) = \R^k\times {\bf J}^{\Phi-1}(\bm\mu),\widetilde\jmath^*_{\bm\mu}\widetilde{\bm\omega}) 
        \arrow[dd, "\wtl\pi_{\bm\mu}=\Id_{\R^k}\otimes\pi_{\bm\mu}"]
        \arrow[uu,"\wtl\jmath_{\bm{\mu}}"]
        &
        &
        ({\bf J}^{\Phi-1}(\bm\mu),\jmath^*_{\bm\mu}\bm\tau,\jmath^*_{\bm\mu}\bm\omega)
        \arrow[ll,"\iota_{\bm u}^{{\bf J}^{\Phi-1}(\bm\mu)}"]
        \arrow[dd,"\pi_{\bm\mu}"] \arrow[uu,"\jmath_{\bm\mu}"]
        \\ 
        \\
        (\R^k\times M^{\bm \Delta}_{\bm  \mu}, \widetilde{\bm\omega}_{\bm\mu}={\rm pr}_{M^{\bm \Delta}_{\bm \mu}}^*{\bm \omega}_{\bm \mu}+\d {\bm u}\barwedge {\rm pr}_{M^{\bm \Delta}_{\bm \mu}}^*{\bm \tau}_{\bm \mu}) \arrow[rr, swap, bend right=3, "\mathrm{pr}_{M^{\bm \Delta}_{\bm  \mu}}"] 
        &
        &
        (M^{\bm \Delta}_{\bm \mu}={\bf J}^{\Phi-1}(\bm\mu) / G_{\bm\mu}^{\bm\Delta},\bm\tau_{\bm\mu},\bm\omega_{\bm\mu})
        \arrow[ll, "\iota_{\bm u}^{M^{\bm \Delta}_{\bm \mu}}", swap, bend right=3]
    \end{tikzcd}
\end{center}
    \caption{Scheme of the different structures involved in the $k$-polycosymplectic reduction through $k$-polysymplectic fibred manifolds. It is worth noting that $k$-polysymplectic manifolds in the above diagram admit a series of vector fields satisfying properties extending the ones for Reeb vector fields in $k$-polycosymplectic geometry.}
    \label{fig:ReductionScheme}
\end{figure}

As a byproduct of our work, the $k$-polysymplectic reduction theory in \cite{MRSV15} is improved by removing some unnecessary technical conditions imposed there, namely the {\rm Ad}$^{*k}$-invariance of the momentum map and other minor aspects concerning the existence of a quotient manifold structure in the quotients spaces appearing in the $k$-polysymplectic Marsden--Weinstein reduction. This is mainly performed by generalising the standard theory of affine Lie group actions for the symplectic case (see \cite{OR04}) to the $k$-polycosymplectic and $k$-polysymplectic realms. Our techniques also generalise the known relations between cosymplectic and symplectic manifolds and their reductions \cite{LS93}.

Also as a byproduct of our results, we obtain in Theorem \ref{Prop::PolyCosymSymEqui} that $k$-polycosymplectic geometry is a particular case of $k$-polysymplectic geometry when the $k$-polysymplectic manifolds admit a certain fibration. Such $k$-polysymplectic manifolds are here called $k$-polysymplectic fibred manifolds. Note that this result is very relevant, as it shows that $k$-polycosymplectic geometry is a particular case of $k$-polysymplectic geometry and it allows us to use the techniques of $k$-polysymplectic geometry to study $k$-polycosymplectic manifolds. In fact, this result is employed here to devise a $k$-polycosymplectic reduction from a  $k$-polysymplectic fibred one. 

It is convenient to stress that Theorem \ref{Prop::PolyCosymSymEqui} connects a $k$-polycosymplectic structure on $M$ with a $k$-polysymplectic structure on $\mathbb{R}^k\times M$. The larger dimension of the $k$-polysymplectic manifold may lead to certain complications. For example,  Section \ref{Sec::Polyco&Polysym} shows that Hamiltonian $k$-vector fields in the $k$-polycosymplectic realm are related to $k$-polysymplectic Hamiltonian $k$-vector fields with different equilibrium points, which may potentially introduce difficulties to study certain problems. For instance, \cite{LMZ22} shows that the extension from cosymplectic Hamiltonian vector fields with equilibrium points may lead to Hamiltonian vector fields in the associated symplectic manifolds without them, which gives rise to problems, for instance, in the study of relative equilibrium points \cite{LMZ22}. Briefly speaking, these extensions techniques are interesting for studying geometric structures in terms of others, but their interest is quite restricted for analysing associated dynamical systems.

Finally, a Marsden--Weinstein $k$-cosymplectic to $\ell$-cosymplectic reduction is presented. This reduction can be employed to reduce geometrically $k$-cosymplectic manifolds using $k$-cosymplectic Lie group symmetries that do not leave invariant the coordinates of the base manifold. As far as we know, this approach is pioneering in the literature and radically different from previous geometric results. The corresponding reduction of the HDW equations from a $k$-cosymplectic to an $\ell$-cosymplectic reduction is accomplished provided some technical conditions are satisfied. An application of our findings to a vibrating membrane is accomplished. The procedure is developed in such a way that a momentum map relative to a $k$-cosymplectic Lie group action for an original $k$-cosymplectic manifold allows for the reduction of the resulting $\ell$-cosymplectic manifold by our previous $\ell$-polycosymplectic Marsden--Weinstein reduction. An analogue result for HDW equations is devised.

Apart from the previous theoretical results, some applications of our findings to two coupled vibrating strings, a particular sort of vibrating membrane, and field theories with symmetries are accomplished.

The structure of the paper goes as follows. Section \ref{Sec::CosGeo} introduces some basic notions from cosymplectic geometry. In particular, it presents three distinguished vector fields related to a smooth function on a cosymplectic manifold and set conventions to be used hereafter. Section \ref{Sec::CosMWRed} recalls the main results from the theory of non-$\Ad^*$-equivariant momentum maps on cosymplectic manifolds and presents a generalisation of the Marsden--Weinstein reduction theorem to the cosymplectic setting. At the end of this section, the classical Marsden--Weinstein reduction is used to obtain a cosymplectic reduced manifold, this very slightly generalises a result in \cite{LS93}, while provides a very simple example about some of the ideas to be generalised  in our main theorems. Section \ref{Sec::FundFieldTh} defines and presents all crucial properties of $k$-polysymplectic and $k$-polycosymplectic manifolds. This section finishes with an example of a vibrating membrane subject to an external force, where the $k$-polycosymplectic setting applies. Section \ref{Sec::PolysymplecticReduction} generalises the notion of the $k$-polysymplectic momentum map by omitting the $\Ad^{*k}$-equivariant condition, and as a byproduct, it also generalises the known $k$-polysymplectic reduction theorem for non-$\Ad^{*k}$-equivariant $k$-polysymplectic momentum maps. Section \ref{Sec::PolycoReduction} is divided into three subsections. $k$-Polycosymplectic momentum maps with and without $\Ad^{*k}$-equivariance are defined in Sections \ref{Sec::with} and \ref{Sec::without}, respectively. Section \ref{Sec::Polyco&Polysym} performs a $k$-polycosymplectic reduction via an extended $k$-polysymplectic manifold and a $k$-polysymplectic reduction theorem. It also discusses the conditions essential to execute the reduction and comments on how these conditions recover via the approach presented in \cite{MRSV15}. Section \ref{Se::Examples} demonstrates a $k$-polycosymplectic reduction in action on the examples of a product of cosymplectic manifolds and on a system consisting of two coupled vibrating strings. Finally, Section \ref{Sec::SPRed} studies Marsden--Weinstein reductions of $k$-cosymplectic to $\ell$-cosymplectic manifolds and other associated $\ell$-polycosymplectic reductions. Section \ref{Sec::SPRed} also analyses the associated reduction of HDW equations  and it provides an example illustrating our techniques.

\section{Cosymplectic geometry}
\label{Sec::CosGeo}
 
Let us survey the basic notions and results on cosymplectic geometry and related concepts to be used in this work (see \cite{LMZ22,Lic1951, Lic1975} for details). This will serve as a simple illustrative example of our further methods. $M$ is hereafter assumed to be a manifold. Throughout the paper, all structures are assumed to be smooth and globally defined, unless otherwise stated.  Manifolds are second-countable, Hausdorff, and connected. The sum over crossed repeated indices is understood unless indexes are hatted. Indexes have a standard range of values, which are implicitly assumed. In case a sum over an index is carried out over  a non-standard range of values, a explanatory summation symbol will be explicitly written. This will mainly occur  in Section 
\ref{Sec::SPRed}.

\begin{definition}
Let $M$ be a manifold equipped with a closed differential two-form $\omega\in\Omega^2(M)$ and a non-vanishing differential one-form $\tau\in\Omega^1(M)$ such that $\ker\tau\oplus\ker\omega=\T M$. Then,  $(M,\tau,\omega)$ is called a \textit{cosymplectic manifold}.
\end{definition}

Given a point $x\in M$, the {\it cosymplectic orthogonal} of a subspace $V_x\subset \T_xM$ with respect to the cosymplectic manifold $(M,\tau,\omega)$ is defined by
\[
V^{\perp_\omega}_x = \{z_x\in \T_xM\,\mid\,\omega_x(z_x,v_x)=0\,,\ \forall v_x\in V_x\}\,.
\]
Note that $(M,\tau,\omega)$ is a cosymplectic manifold if and only if $M$ is a $(2n+1)$-dimensional manifold such that $\omega^n\wedge\tau$ is a volume form on $M$. Hence, cosymplectic manifolds are always orientable and odd-dimensional. Physically, cosymplectic manifolds appear in the description of $t$-dependent mechanical systems \cite{Al89}.




The Darboux theorem for cosymplectic manifolds \cite{Al89,LR89,Go69,LM87} states that, given a cosymplectic manifold $(M,\tau,\omega)$, each $x\in M$ admits a local coordinate system  $\{t,q^1,\dotsc,q^n,p_1,\dotsc,p_n\}$  on an open neighbourhood of $x$ so that 
\begin{equation}
    \tau = \d t\,,\qquad \omega = \d q^i\wedge \d p_i\,.
\end{equation}
Such local coordinates are called \textit{cosymplectic Darboux coordinates}, although we will simply call them \textit{Darboux coordinates} if it is clear from the context what we mean. Note that Darboux coordinates are not unique.

Every cosymplectic manifold $(M,\tau,\omega)$ gives rise to a unique vector field $R\in\X(M)$, called the {\it Reeb vector field} of $(M,\tau,\omega)$, characterised by
\begin{equation}\label{eq:Reeb-conditions}
    \inn_R\tau = 1\,,\qquad\inn_R\omega = 0\,.
\end{equation}
In cosymplectic Darboux coordinates, the Reeb vector field reads $R = \tparder{}{t}$.

\begin{definition}
    A \textit{cosymplectomorphism} is a map $\varphi\colon M_1\rightarrow M_2$ between cosymplectic manifolds $(M_1,\tau_1,\omega_1)$ and $(M_2,\tau_2,\omega_2)$ so that $\varphi^*\tau_2=\tau_1$ and $\varphi^*\omega_2=\omega_1$. 
\end{definition}

    A cosymplectic manifold $(M,\tau,\omega)$ leads to a vector bundle isomorphism   $\flat\colon\T M\to \cT M$ given by
    \begin{equation}\label{eq::form}
        v_x\in \T_xM\longmapsto \inn_{v_x}\omega_x + (\inn_{v_x}\tau_x)\tau_x\in\cT_x M\,,\qquad \forall x\in M\,.
    \end{equation}Moreover, given closed differential forms $\tau\in\Omega^1(M)$ and $\omega\in\Omega^2(M)$,  if the map $\flat:\T M\to\cT M$ of the form \eqref{eq::form} is a vector bundle isomorphism, then $\tau$ and $\omega$ give rise to a cosymplectic manifold $(M,\tau,\omega)$.


From now on, $(M,\tau,\omega)$ will always stand for a cosymplectic manifold. In cosymplectic geometry, every function $f\in \Cinfty(M)$ allows one to define three distinguished types of vector fields given in the following definition.

\begin{definition}
    Every function $f\in\Cinfty(M)$ gives rise, via $(M,\tau,\omega)$, to three relevant vector fields on $M$:
    \begin{itemize}
        \begin{subequations}
            \item A \textit{gradient vector field}, namely
            \begin{equation}\label{Eq::GradVecField}
                \nabla f = \flat^{-1}(\d f)\,,
            \end{equation}
            which amounts to $\inn_{\nabla f}\omega = \d f - (Rf)\tau$ and $\inn_{\nabla f}\tau = Rf$.
            
            \item A \textit{Hamiltonian vector field}, $X_f$, given by
            \begin{equation}\label{Eq::HamVecField}
                X_f = \flat^{-1}(\d f - (Rf)\tau)\,,
            \end{equation}
            which is equivalent to $\inn_{X_{f}}\omega = \d f-(Rf)\tau$ and $\inn_{X_f}\tau = 0$.
            
            \item An \textit{evolution vector field} \begin{equation}\label{Eq::EvVecField}
                E_f =\frac{}{} R + X_f\,,
            \end{equation}
            which is the unique vector field on $M$ satisfying the conditions  $\inn_{E_{f}}\omega = \d f-(Rf)\tau$ and $\inn_{E_f}\tau = 1$.
        \end{subequations}
    \end{itemize}
\end{definition}

In Darboux coordinates for $(M,\tau,\omega)$ around a point $x\in M$, the vector fields \eqref{Eq::GradVecField}, \eqref{Eq::HamVecField}, and \eqref{Eq::EvVecField} read
\begin{equation}
\nabla f = \frac{\partial f}{\partial t}\frac{\partial}{\partial t}+\frac{\partial f}{\partial p_i}\frac{\partial}{\partial q^i}-\frac{\partial f}{\partial q^i}\frac{\partial}{\partial p_i}\,,\qquad
X_f = \frac{\partial f}{\partial p_i}\frac{\partial}{\partial q^i}-\frac{\partial f}{\partial q^i}\frac{\partial}{\partial p_i}\,,
\end{equation}
and
\begin{equation}
E_f=\frac{\partial}{\partial t} + \frac{\partial f}{\partial p_i}\frac{\partial}{\partial q^i}-\frac{\partial f}{\partial q^i}\frac{\partial}{\partial p_i}\,.
\end{equation}
These local expressions are quite convenient to understand, quickly, several results. The integral curves of $E_f$ are given, in Darboux coordinates, by the solutions of
\begin{equation}
\label{Eq::CosymHamEq}
\frac{\d t}{\d s}=1\,,\qquad \frac{\d q^i}{\d s}=\frac{\partial f}{\partial p_i}(t,q,p)\,,\qquad \frac{\d p_i}{\d s}=-\frac{\partial f}{\partial q^i}(t,q,p)\,,\qquad i=1,\ldots,n\,,
\end{equation}
where $(t,q,p)$ stands for $(t,q^1,\ldots,q^n,p_1,\ldots,p_n)$. 

Let $N$ be a one-dimensional manifold and let $(P,\omega_P)$ be a symplectic manifold. Let us define a cosymplectic manifold on $M=N\times P$ and its related  Hamilton equations. Let $\pi_N: M\rightarrow N$ and let $\pi_P: M\rightarrow P$ be the projections onto the first and second factors of $N\times P$, respectively. Then, $\omega=\pi_P^*\omega_P$ becomes a closed differential two-form  on $M$. Meanwhile, a non-vanishing closed differential one-form $\tau_N$ on $N$ gives rise to a non-vanishing closed differential one-form $\tau=\pi_N^*\tau_N$ on $M$ and a cosymplectic manifold $(M,\tau,\omega)$. If not otherwise stated, Darboux coordinates on $(N\times P,\tau,\omega)$ will be assumed to be of the form $\{t,q^1,\ldots,q^n,p_1,\ldots,p_n\}$, where $t$ is the pull-back to $N\times P$ of a primitive function of $\tau_N$, while $q^1,\ldots,q^n,p_1,\ldots,p_n$ are the pull-backs to $M$ of some Darboux coordinates for $\omega_P$ on $P$. For simplicity, it is common in the literature to denote the pull-backs of functions on $N$ and $P$ to $M=N\times P$ in the same manner as the initial variables in $N$ and $P$. Although this is a slight abuse of notation, it does not lead to any misunderstanding and simplifies the presentation of results.

If $M=\R\times \cT Q$, $\tau_\mathbb{R}=\d t$, $\omega_{\cT Q}=\d q^i\wedge \d p_i$, then equations \eqref{Eq::CosymHamEq} lead to
\begin{equation}
\label{Eq::HamCosEq}
    \frac{\d q^i}{\d t}=\frac{\partial f}{\partial p_i}(t,q,p)\,,\qquad \frac{\d p_i}{\d t}=-\frac{\partial f}{\partial q^i}(t,q,p)\,,\qquad i=1,\ldots,n\,.
\end{equation}
Hence, \eqref{Eq::HamCosEq} retrieves the Hamilton equations for a $t$-dependent Hamiltonian system on $\cT Q$ (see \cite{AM78,LMZ22,LZ21}). 
\begin{proposition}\label{prop:gradXR}
    The gradient vector field  of $f\in \Cinfty(M)$ relative to $(M,\tau,\omega)$ reads $\nabla f = X_f + (Rf)R$. Moreover, if $Rf$ is a locally constant function, then $[R,X_f] = 0$.
\end{proposition}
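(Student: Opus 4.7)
The plan is to unwind the definitions in \eqref{Eq::GradVecField}--\eqref{Eq::HamVecField} and exploit the linearity of the bundle isomorphism $\flat\colon \T M\to \cT M$ defined in \eqref{eq::form}. First, I would show that $\flat(R)=\tau$: indeed, using $\inn_R\omega=0$ and $\inn_R\tau=1$ from \eqref{eq:Reeb-conditions}, the definition of $\flat$ gives $\flat(R)=\inn_R\omega+(\inn_R\tau)\tau=\tau$, hence $\flat^{-1}(\tau)=R$. Then applying $\flat^{-1}$ to the identity $\d f=\bigl(\d f-(Rf)\tau\bigr)+(Rf)\tau$ yields
\begin{equation}
\nabla f=\flat^{-1}(\d f)=\flat^{-1}\bigl(\d f-(Rf)\tau\bigr)+(Rf)\flat^{-1}(\tau)=X_f+(Rf)R,
\end{equation}
which is the first claim.

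For the second claim, since $\flat$ is a vector bundle isomorphism, it suffices to show $\flat([R,X_f])=0$, i.e.\ that both $\inn_{[R,X_f]}\omega$ and $\inn_{[R,X_f]}\tau$ vanish. The standard commutator identity $\inn_{[R,X_f]}\alpha=\Lie_R\inn_{X_f}\alpha-\inn_{X_f}\Lie_R\alpha$ (valid for any differential form $\alpha$) reduces the matter to computing the Lie derivatives of $\omega$ and $\tau$ along $R$. Here I would invoke Cartan's formula together with $\inn_R\omega=0$, $\inn_R\tau=1$, $\d\omega=0$, and $\d\tau=0$, which immediately give $\Lie_R\omega=0$ and $\Lie_R\tau=0$.

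Combining these with $\inn_{X_f}\omega=\d f-(Rf)\tau$ and $\inn_{X_f}\tau=0$ yields
\begin{equation}
\inn_{[R,X_f]}\omega=\Lie_R\bigl(\d f-(Rf)\tau\bigr)=\d(Rf)-\bigl(R(Rf)\bigr)\tau-(Rf)\Lie_R\tau,\qquad \inn_{[R,X_f]}\tau=0.
\end{equation}
The hypothesis that $Rf$ is locally constant means $\d(Rf)=0$ (and therefore $R(Rf)=0$), so the right-hand side of the first equation also vanishes. Since both contractions vanish, $\flat([R,X_f])=0$ and hence $[R,X_f]=0$, as claimed. No step here is a real obstacle; the only point to keep an eye on is the correct bookkeeping when pulling the factor $Rf$ through the Lie derivative, where one uses that $\Lie_R(g\alpha)=(Rg)\alpha+g\Lie_R\alpha$ for any function $g$ and form $\alpha$.
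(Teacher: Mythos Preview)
Your proof is correct in every detail. The paper itself does not supply a proof of this proposition---it is stated without demonstration---so there is no approach to compare against; your argument via $\flat(R)=\tau$ and the commutator identity $\inn_{[R,X_f]}=\Lie_R\inn_{X_f}-\inn_{X_f}\Lie_R$ is exactly the natural way to establish it.
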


    

Every cosymplectic manifold $(M,\tau,\omega)$ yields a Poisson bracket $\{\cdot,\cdot\}_{\tau,\omega}\colon\Cinfty(M)\times \Cinfty(M)\rightarrow \Cinfty(M)$ given by
\begin{equation}\label{Eq:PoissonStructure}
    \{f,g\}_{\tau,\omega} = \omega(\nabla f,\nabla g) = \omega(X_f,X_g)\,,\qquad \forall f,g\in \Cinfty(M)\,,
\end{equation}
where the last equality results from Proposition \ref{prop:gradXR} and $\inn_R\omega = 0$.
It can be proved that
\begin{equation}\label{eq:AntiMorphism}
    X_{\{f,g\}_{\tau,\omega}}= -[X_f,X_g]\,,\qquad \forall f,g\in \Cinfty(M)\,.
\end{equation}
Then, the space of Hamiltonian vector fields relative to a cosymplectic manifold $(M,\tau,\omega)$, let us say $\Ham(M,\tau,\omega)$, is a Lie algebra relative to the commutator of vector fields. Since $\Cinfty(M)$ is, in particular, a Lie algebra relative to $\{\cdot,\cdot\}_{\tau,\omega}$, there exists a Lie algebra  homomorphism $f\in\Cinfty(M)\mapsto -X_f\in \Ham(M,\tau,\omega)$.

\section{Cosymplectic Marsden--Weinstein reduction}
\label{Sec::CosMWRed}

This section aims to introduce the notions and results needed to present the cosymplectic Marsden--Weinstein reduction \cite{Al89, LMZ22} and to analyse its relation with the standard symplectic Marsden--Weinstein one. The ideas given in this section will be generalised in forthcoming sections, with the help of other new techniques, to devise a $k$-polycosymplectic reduction theory. Moreover, we here slightly generalise known facts on the relation of the cosymplectic and the symplectic Marsden--Weinstein reductions by removing unnecessary technical conditions used in the previous literature \cite{LS93}.

\subsection{Momentum maps and cosymplectic reductions}
\label{Sec::AGeneralMomentumMap}

\begin{definition}
A \textit{cosymplectic Lie group action} relative to $(M,\tau,\omega)$ is a Lie group action $\Phi\colon G\times M\rightarrow M$ such that, for every $g\in G$, the map $\Phi_g\colon x\in M\mapsto \Phi(g,x)\in  M$ is a cosymplectomorphism. 
\end{definition}

As $M$ is assumed to be connected, $\Phi\colon G\times M\rightarrow M$ is a cosymplectic Lie group action for $(M,\tau,\omega)$ if, and only if,
\begin{equation}
    \Lie_{\xi_M}\tau = 0\,,\qquad\Lie_{\xi_M}\omega = 0\,,\qquad\forall\xi\in\g\,,
\end{equation}
where $\xi_M$ is the fundamental vector field of $\Phi$ related to $\xi\in\g$, namely
\begin{equation}
    \xi_M(x) = \restr{\frac{\d}{\d s}}{s=0}\Phi(\exp(s\xi),x)\,,\qquad \forall x\in M\,.
\end{equation}
Since $\d\tau = 0$, the condition $\Lie_{\xi_M}\tau = 0$ implies that $\inn_{\xi_M}\tau$ takes a constant, not necessarily zero, value on $M$. This apparently minor detail has relevant applications in the reduction of cosymplectic manifolds to symplectic manifolds and their applications to circular restricted three-body problems (cf. \cite{Al89,LMZ22}). As shown in Section \ref{Sec::SPRed}, the fact that $\iota_{\xi_M}\tau$ may not be zero will play a relevant role in the description of radically new types of Marsden--Weinstein reductions. 

\begin{definition}
    Consider a cosymplectic Lie group action $\Phi\colon G\times M\rightarrow M$ relative to $(M,\tau,\omega)$ such that $\inn_{\xi_M}\tau = 0$ for every $\xi\in\g$. A {\it cosymplectic momentum map} for the Lie group action $\Phi$ is a map $\bfJ^\Phi\colon M\rightarrow \g^*$ such that
    \begin{equation}\label{Eq::CosMomMap}
        \inn_{\xi_M}\omega = \d\langle \bfJ^\Phi,\xi\rangle = \d J_\xi\,,\qquad RJ_\xi = 0\,,\qquad \forall\xi\in\g\,.
    \end{equation}
\end{definition}
Instead of the term ``cosymplectic momentum map'', we will frequently use the term ``momentum map'' when it is clear from the context what we mean. The condition $RJ_\xi=0$, for every $\xi\in \Lg$, is required to apply the cosymplectic reduction theorem to be introduced afterwards in this  section. This condition is not too restrictive, as many relevant non-autonomous Hamiltonian systems satisfy it \cite{LMZ22}. It is worth noting that only cosymplectic Lie group actions whose fundamental vector fields are Hamiltonian admit cosymplectic momentum maps.

Before continuing our exposition on cosymplectic geometry, it is appropriate to recall that every Lie group action $\Phi: G\times M\rightarrow M$ amounts to a Lie group whose elements are the diffeomorphisms $\Phi_g: M\ni m\mapsto \Phi(g,m)\in M$ for every $g\in G$. Moreover, every Lie group $G$ acts on itself by inner automorphisms, namely $G$ gives rise to a Lie group action $I:(g,h)\in G\times G\mapsto I_{g}(h)=ghg^{-1}\in G$, whose mappings $I_g:h\in G\mapsto I(g,h)\in G$, with $g\in G$, are called {\it inner automorphisms}. This leads to the {\it adjoint action} of $G$ on its Lie algebra $\mathfrak{g}$, which takes the form $\Ad:(g,v)\in G\times\mathfrak{g}\mapsto \Ad_g(v)=\T_eI_g(v)\in  \mathfrak{g}$. In turn, the adjoint action gives rise to the {\it co-adjoint action} $\Ad^*:(g,\vartheta)\in G\times \mathfrak{g}^*\mapsto \Ad^*_{g^{-1}}\vartheta= \vartheta\circ \Ad_{g^{-1}}\in \mathfrak{g}^*$. It is worth stressing that, according to our notation, $
({\rm Ad}^*)_g={\rm Ad}^*_{g^{-1}}$.

Hereafter, a cosymplectic manifold $(M,\tau,\omega)$ will be sometimes denoted by $M_\tau^\omega$ to shorten the notation. 

\begin{definition}
The triple $(M^\omega_\tau,h,\bfJ^\Phi)$ is called a {\it $G$-invariant cosymplectic Hamiltonian system} relative to $(M,\om,\tau)$ when $\Phi$ is a cosymplectic Lie group action such that $\Phi_g^*h=h$ for every $g\in G$, and $\bfJ^\Phi:M\rightarrow \mathfrak{g}^*$ is a cosymplectic momentum map related to $\Phi$. An {\it $\Ad^*$-equivariant $G$-invariant cosymplectic Hamiltonian system} is a $G$-invariant cosymplectic Hamiltonian system with an {\it $\Ad^*$-equivariant momentum map}, namely ${\bf J}^\Phi\circ \Phi_g={\rm Ad}^*_{g^{-1}}\circ {\bf J}^\Phi$ for every $g\in G$.
\end{definition}

The following technical result, which is well-known \cite{AM78}, will be of interest. 

\begin{lemma}\label{AdjointActionTh}
If $\Phi: G \times M \rightarrow M$ is a Lie group action, then $\left(\Ad_{g*}\xi\right)_M= \Phi_{g*}\xi_M$ for every $g\in G$ and $\xi\in\mathfrak{g}$.
\end{lemma}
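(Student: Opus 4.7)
The plan is to unwind both sides to pointwise expressions at an arbitrary $x\in M$ and reduce the identity to the classical property $\exp(t\Ad_g\xi)=g\exp(t\xi)g^{-1}$ of the exponential map of a Lie group. The only tools required are the definition of a fundamental vector field, the chain rule, and the functorial behaviour of the pushforward $\Phi_{g*}$ of a vector field by a diffeomorphism.

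First, I would fix $g\in G$ and $\xi\in\mathfrak{g}$ and evaluate $(\Ad_{g*}\xi)_M$ at a point $x\in M$ using the definition
\[
(\Ad_{g*}\xi)_M(x)=\restr{\frac{\d}{\d s}}{s=0}\Phi\bigl(\exp(s\,\Ad_{g*}\xi),x\bigr).
\]
Substituting $\exp(s\,\Ad_{g*}\xi)=g\exp(s\xi)g^{-1}$ turns this into $\restr{(\d/\d s)}{s=0}\Phi_g\circ\Phi_{\exp(s\xi)}\circ\Phi_{g^{-1}}(x)$, since $\Phi$ is a left action and therefore $\Phi_{gh}=\Phi_g\circ\Phi_h$ for all $g,h\in G$.

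Next, by the chain rule and the fact that $\Phi_g$ does not depend on $s$, this derivative equals $\T_{\Phi_{g^{-1}}(x)}\Phi_g\bigl(\restr{(\d/\d s)}{s=0}\Phi_{\exp(s\xi)}(\Phi_{g^{-1}}(x))\bigr)$, and the inner derivative is precisely $\xi_M(\Phi_{g^{-1}}(x))$ by definition of the fundamental vector field. Thus
\[
(\Ad_{g*}\xi)_M(x)=\T_{\Phi_{g^{-1}}(x)}\Phi_g\bigl(\xi_M(\Phi_{g^{-1}}(x))\bigr),
\]
which is exactly $(\Phi_{g*}\xi_M)(x)$, from the general formula for the pushforward of a vector field by the diffeomorphism $\Phi_g$. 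Since $x$ was arbitrary, this yields the claimed equality of vector fields on $M$.

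There is no genuine obstacle here; the only point that merits care is the correct bookkeeping of $g$ versus $g^{-1}$ in the pushforward formula, which is why I prefer to work pointwise with the chain rule rather than manipulate vector fields globally. Everything else follows from the basic identity $\exp(s\,\Ad_{g*}\xi)=g\exp(s\xi)g^{-1}$ and the fact that $\Phi$ is a Lie group action.
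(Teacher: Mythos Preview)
Your proof is correct and is the standard argument for this classical fact. The paper does not give its own proof of this lemma; it simply states the result as well-known and cites \cite{AM78}, so there is nothing to compare against beyond noting that your approach is exactly the textbook one.
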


In Marsden--Weinstein reductions, the role played by the co-adjoint action can be substituted by means of a new action on $\mathfrak{g}^*$ (see \cite{LMZ22, OR04}), whose form is justified by means of the following propositions and results.

\begin{proposition}
\label{Prop::CosNonAdEquiv}
Let $(M^\omega_\tau, h,\bf J^\Phi)$ be a $G$-invariant cosymplectic Hamiltonian system. Define the functions $\psi_{g,\xi}\in \Cinfty(M)$ of the form
\[
\psi_{g,\xi}=J_{\xi}\circ\Phi_g- J_{\Ad_{g^{-1}}\xi}\,,\qquad \forall g\in G,\quad\quad \forall \xi\in \mathfrak{g}\,.
\]
Then, $\psi_{g,\xi}$ is constant on $M$ for every $g\in G$ and $\xi\in \mathfrak{g}$. Moreover, the mapping $\sigma:G\ni g\mapsto \sigma(g)\in\Lg^*$ determined by $\<\sigma(g),\xi\>=\psi_{g,\xi}$ satisfies
\begin{equation}\label{Eq::SigmaProperty}
\sigma(gh)=\sigma(g)+\Ad_{g^{-1}}\sigma(h)\,,
\end{equation}
and
\begin{equation}\label{eq::SigmaProperty2}
\sigma(g)=\B J^\Phi(\Phi_g(x))-\Ad_{g^{-1}}^*\B J^\Phi(x)\,,\quad \forall g,h\in G\,,\quad\forall x\in M\,,
\end{equation}
where $\Ad^*_{g^{-1}}\vartheta=\vartheta \circ \Ad_{g^{-1}}$ for all $g\in G$ and $\vartheta\in\mathfrak{g}^*$.
\end{proposition}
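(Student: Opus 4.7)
The plan is to prove the three claims in sequence, each as a direct consequence of the momentum map condition together with the $\Phi$-invariance of $\omega$.

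First I would show $\psi_{g,\xi}$ is locally constant by computing its exterior derivative and invoking connectedness of $M$. The ingredients are: (i) the defining property $\d J_\xi = \inn_{\xi_M}\omega$ of the momentum map; (ii) the standard identity $\Phi_g^*(\inn_X\alpha) = \inn_{\Phi_g^*X}\Phi_g^*\alpha$ for a diffeomorphism $\Phi_g$; (iii) the cosymplectic invariance $\Phi_g^*\omega = \omega$; and (iv) Lemma \ref{AdjointActionTh}, which yields $(\Phi_{g^{-1}})_*\xi_M = (\Ad_{g^{-1}}\xi)_M$. Chaining these gives
\[
\d(J_\xi\circ\Phi_g) \;=\; \Phi_g^*\d J_\xi \;=\; \Phi_g^*(\inn_{\xi_M}\omega) \;=\; \inn_{(\Ad_{g^{-1}}\xi)_M}\omega \;=\; \d J_{\Ad_{g^{-1}}\xi},
\]
so $\d\psi_{g,\xi}=0$ and $\psi_{g,\xi}$ is constant on $M$.

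Second, for the cocycle identity \eqref{Eq::SigmaProperty}, I would re-express Part 1 as the identity $J_\xi\circ\Phi_g = J_{\Ad_{g^{-1}}\xi} + \psi_{g,\xi}$ (the last summand now interpreted as a constant function), then substitute $\Phi_{gh}=\Phi_g\circ\Phi_h$ and $\Ad_{(gh)^{-1}}=\Ad_{h^{-1}}\Ad_{g^{-1}}$ into the definition of $\psi_{gh,\xi}$. Applying the identity twice and grouping terms yields
\[
\psi_{gh,\xi} \;=\; \psi_{g,\xi} + \psi_{h,\Ad_{g^{-1}}\xi}.
\]
Pairing with $\xi$ and using $\langle \sigma(h),\Ad_{g^{-1}}\xi\rangle = \langle \Ad^*_{g^{-1}}\sigma(h),\xi\rangle$ rewrites this as \eqref{Eq::SigmaProperty}, the co-adjoint being the tacit reading of $\Ad_{g^{-1}}\sigma(h)$ on $\mathfrak{g}^*$.

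Third, since $\psi_{g,\xi}$ is constant it may be evaluated at any $x\in M$; writing $J_\xi(\Phi_g(x))=\langle\bfJ^\Phi(\Phi_g(x)),\xi\rangle$ and $J_{\Ad_{g^{-1}}\xi}(x)=\langle\Ad^*_{g^{-1}}\bfJ^\Phi(x),\xi\rangle$, then factoring out the arbitrary $\xi\in\g$, yields \eqref{eq::SigmaProperty2}. A welcome byproduct is that this also confirms the right-hand side of \eqref{eq::SigmaProperty2} is independent of $x$, recovering Part 1 from a different angle. The only real obstacle here is notational bookkeeping between $\Ad$, $\Ad^*$ and their inverses: the computations are short, but a single wrong convention would propagate through the whole affine cocycle framework built on this proposition in the forthcoming $k$-polysymplectic and $k$-polycosymplectic generalisations.
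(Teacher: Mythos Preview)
Your proof is correct and follows essentially the same strategy as the paper. Although the paper does not include a proof of Proposition~\ref{Prop::CosNonAdEquiv} itself, it proves the $k$-polysymplectic analogue (Proposition~\ref{Prop::PsiConstant}) by the same route: compute $\d\psi_{g,\xi}=0$ via the momentum-map condition, $\Phi_g^*\omega=\omega$, and Lemma~\ref{AdjointActionTh}, and then obtain the cocycle identity by direct manipulation. The only cosmetic difference is the order of the last two steps: the paper first records the explicit formula $\sigma(g)=\mathbf{J}^\Phi\circ\Phi_g-\Ad^*_{g^{-1}}\mathbf{J}^\Phi$ (your \eqref{eq::SigmaProperty2}) and uses it to verify the cocycle relation at the level of $\sigma$, whereas you establish the cocycle relation at the level of $\psi$ and derive \eqref{eq::SigmaProperty2} afterwards; both orderings are equivalent and equally short.
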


The mapping $\sigma$ defined in Proposition \ref{Prop::CosNonAdEquiv} is called the {\it  co-adjoint cocycle} associated with the cosymplectic momentum map $\B J^\Phi$ on $M$. It steams from Proposition \ref{Prop::CosNonAdEquiv} that $\mathbf{J}^\Phi$ is $\Ad^*$-equivariant if and only if $\sigma=0$. Hence, $\sigma$ measures the lack of $\Ad^*$-equivariance of its associated momentum map. A map $\sigma:G\rightarrow\mathfrak{g}^*$ is a {\it coboundary}, if there exists $\mu\in\mathfrak{g}^*$ such that
\begin{equation}
\label{CEq:on}
\sigma(g)=\mu-\Ad_{g^{-1}}^*\mu,\qquad \forall g\in G.
\end{equation}
 
Every coboundary satisfies the relation \eqref{Eq::SigmaProperty}, which motivates that it is also called a {\it co-adjoint cocycle}. The space of co-adjoint cocycles on a Lie group $G$ admits an equivalence relation, whose equivalence classes, the so-called {\it cohomology classes}, are determined by setting that two co-adjoint cocycles are related if their difference is a coboundary. The following proposition shows that a cosymplectic Lie group action admitting a cosymplectic momentum map with associated co-adjoint cocycle $\sigma$ is such that it gives rise to cohomology class $[\sigma]$ that is independent of the chosen associated cosymplectic momentum map. 

\begin{proposition}
Let $\Phi: G\times M\rightarrow M$ be a cosymplectic action. If ${\bf J}_1^\Phi$ and ${\bf J}_2^\Phi$ are two momentum maps with co-adjoint  cocycles $\sigma_1$ and $\sigma_2$, respectively, then $[\sigma_1]=[\sigma_2]$.
\end{proposition}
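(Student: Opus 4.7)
The plan is to show that any two cosymplectic momentum maps for the same action $\Phi$ differ by a constant element of $\mathfrak{g}^*$, and then to exploit formula \eqref{eq::SigmaProperty2} to write the difference $\sigma_1-\sigma_2$ explicitly as a coboundary of the form \eqref{CEq:on}.

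First, fix $\xi\in\mathfrak{g}$ and consider the two components $J_{1,\xi}=\langle \mathbf{J}_1^\Phi,\xi\rangle$ and $J_{2,\xi}=\langle \mathbf{J}_2^\Phi,\xi\rangle$. Both satisfy the defining condition \eqref{Eq::CosMomMap}, namely $\inn_{\xi_M}\omega=\d J_{1,\xi}=\d J_{2,\xi}$, so that $\d(J_{1,\xi}-J_{2,\xi})=0$. Since $M$ is connected, $J_{1,\xi}-J_{2,\xi}$ is a real constant depending only on $\xi$. Moreover, the dependence on $\xi$ is linear because $\mathbf{J}_1^\Phi$ and $\mathbf{J}_2^\Phi$ are linear in $\xi$ through the pairing with $\mathfrak{g}^*$. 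Hence there exists a unique $\mu\in\mathfrak{g}^*$ such that $\langle\mu,\xi\rangle=J_{1,\xi}(x)-J_{2,\xi}(x)$ for every $x\in M$ and every $\xi\in\mathfrak{g}$; equivalently, $\mathbf{J}_1^\Phi-\mathbf{J}_2^\Phi=\mu$ as a constant map $M\to\mathfrak{g}^*$. Note that the extra condition $RJ_{i,\xi}=0$ imposed in the definition of cosymplectic momentum map is automatically preserved under this difference, so no additional compatibility has to be checked.

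Second, I would apply formula \eqref{eq::SigmaProperty2} of Proposition \ref{Prop::CosNonAdEquiv} to both momentum maps at an arbitrary point $x\in M$, obtaining
\begin{equation}
\sigma_1(g)-\sigma_2(g)=(\mathbf{J}_1^\Phi-\mathbf{J}_2^\Phi)(\Phi_g(x))-\Ad_{g^{-1}}^*(\mathbf{J}_1^\Phi-\mathbf{J}_2^\Phi)(x)=\mu-\Ad_{g^{-1}}^*\mu\,,\qquad \forall g\in G\,.
\end{equation}
This is exactly the form of the coboundary in \eqref{CEq:on}. Therefore $\sigma_1-\sigma_2$ is a coboundary, which by the equivalence relation on co-adjoint cocycles gives $[\sigma_1]=[\sigma_2]$.

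I do not expect any serious obstacle: the only delicate point is using the connectedness of $M$ (assumed throughout the paper) to upgrade ``locally constant'' to ``constant'' when concluding that $\mathbf{J}_1^\Phi-\mathbf{J}_2^\Phi$ is a global element of $\mathfrak{g}^*$. All the remaining manipulations are formal consequences of the bilinear pairing between $\mathfrak{g}$ and $\mathfrak{g}^*$, the definition of $\Ad^*_{g^{-1}}$, and the identity \eqref{eq::SigmaProperty2} established in the preceding proposition.
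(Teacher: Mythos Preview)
Your argument is correct and is precisely the standard proof of this fact. The paper does not actually supply a proof of this proposition; it is stated without proof, and for the analogous $k$-polysymplectic statement (Proposition~\ref{Prop::CoAdjCocycles}) the paper simply notes that ``the proof is analogous to the one of the equivalent proposition on symplectic manifolds \cite{OR04, Za21}.'' Your write-up is exactly that standard argument: the difference of two momentum maps is a constant $\mu\in\mathfrak{g}^*$ by connectedness of $M$, and plugging this into \eqref{eq::SigmaProperty2} exhibits $\sigma_1-\sigma_2$ as the coboundary $g\mapsto \mu-\Ad_{g^{-1}}^*\mu$.
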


If a momentum map ${\bf J}^\Phi : M\rightarrow \mathfrak{g}^*$ associated with a Lie group action $\Phi: G\times M\rightarrow M$ is equivariant with respect to a Lie group action $\Psi: G\times \mathfrak{g}^*\rightarrow \mathfrak{g}^*$, then it is said that ${\bf J}^\Phi$ is 
{\it $\Psi$-equivariant}. That is why the following proposition can be summarised by saying that ${\bf J}^\Phi$ is $\Delta$-equivariant for the so-called {\it affine Lie group action} $\Delta: G\times\mathfrak{g}^*\rightarrow\mathfrak{g}^*$ to be defined by ${\bf J}^\Phi$.

\begin{proposition}\label{Prop::GenEqJ}
Let ${\bf J}^\Phi:M\rightarrow\mathfrak{g}^*$ be a momentum map associated with a cosymplectic Lie group action $\Phi:G\times M\rightarrow M$ with co-adjoint cocycle $\sigma$. Then,
\begin{enumerate}[{\rm(1)}]
    \item the map $\Delta:G\times \mathfrak{g}^*\ni(g,\mu)\mapsto \Delta_g=\Ad_{g^{-1}}^*\mu +\sigma(g)\in\mathfrak{g}^*$ is a Lie group action,
    \item the cosymplectic momentum map ${\bf J}^\Phi$ is equivariant with respect to $\Delta$, in other words, every $g\in G$ gives rise to a commutative diagram
\begin{center}
 \begin{tikzcd}
   M 
   \arrow{r}{\Phi_g}
   \arrow{d}{\B J^\Phi}
   &
   M
   \arrow{d}{\B J^\Phi}
   \\
   \mathfrak{g}^*
   \arrow{r}{\Delta_g}
   &
   \Lg^*.
 \end{tikzcd}
\end{center}
\end{enumerate}
\end{proposition}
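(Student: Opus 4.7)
The plan is to verify the two group-action axioms for $\Delta$ by exploiting the cocycle identity \eqref{Eq::SigmaProperty} already established in Proposition \ref{Prop::CosNonAdEquiv}, and then to read off the equivariance in part (2) directly from \eqref{eq::SigmaProperty2}.

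For part (1), I would first check that $\Delta_e = \Id_{\g^*}$. Setting $g=h=e$ in \eqref{Eq::SigmaProperty} yields $\sigma(e) = \sigma(e) + \sigma(e)$, whence $\sigma(e)=0$, and so $\Delta_e(\mu) = \Ad^*_e\mu + 0 = \mu$. Next, to verify the composition axiom $\Delta_{gh} = \Delta_g \circ \Delta_h$, I would compute both sides directly. Using \eqref{Eq::SigmaProperty},
$$\Delta_{gh}(\mu) \;=\; \Ad^*_{(gh)^{-1}}\mu + \sigma(gh) \;=\; \Ad^*_{(gh)^{-1}}\mu + \sigma(g) + \Ad^*_{g^{-1}}\sigma(h),$$
while
$$\Delta_g(\Delta_h(\mu)) \;=\; \Ad^*_{g^{-1}}\bigl(\Ad^*_{h^{-1}}\mu + \sigma(h)\bigr) + \sigma(g) \;=\; \Ad^*_{g^{-1}}\Ad^*_{h^{-1}}\mu + \Ad^*_{g^{-1}}\sigma(h) + \sigma(g).$$
Equality of the two expressions then reduces to the standard identity $\Ad^*_{g^{-1}}\Ad^*_{h^{-1}} = \Ad^*_{(gh)^{-1}}$, which follows from the paper's convention $\Ad^*_{g^{-1}}\vartheta = \vartheta \circ \Ad_{g^{-1}}$ together with the functorial property $\Ad_{h^{-1}} \circ \Ad_{g^{-1}} = \Ad_{(gh)^{-1}}$ on $\g$.

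For part (2), equivariance is essentially a rewriting of \eqref{eq::SigmaProperty2}: solving that identity for $\bfJ^\Phi(\Phi_g(x))$ gives
$$\bfJ^\Phi(\Phi_g(x)) \;=\; \Ad^*_{g^{-1}}\bfJ^\Phi(x) + \sigma(g) \;=\; \Delta_g\bigl(\bfJ^\Phi(x)\bigr),$$
for every $x \in M$ and $g \in G$, which is exactly the identity $\bfJ^\Phi \circ \Phi_g = \Delta_g \circ \bfJ^\Phi$, i.e., the commutativity of the diagram in the statement.

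The only mildly delicate point, and the main place where one has to be careful, is keeping the conventions consistent throughout: the cocycle identity \eqref{Eq::SigmaProperty} has the shape it does precisely so that, when composed with the co-adjoint \emph{anti}-morphism $g \mapsto \Ad^*_{g^{-1}}$, the affine perturbation $\sigma(g)$ combines to yield a genuine (left) group action rather than an anti-action. Once this bookkeeping is set up, both statements follow by the short computations above, with no further input beyond Proposition \ref{Prop::CosNonAdEquiv}.
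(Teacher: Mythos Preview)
Your proposal is correct and follows essentially the same approach as the paper: the proof of the analogous $k$-polysymplectic statement (Proposition \ref{Prop::GenEqJPolySym}) proceeds in exactly this way, deriving $\bm\sigma(e)=0$ to get $\bm\Delta_e=\Id$, using the cocycle identity from Proposition \ref{Prop::PsiConstant} to verify $\bm\Delta_{g_1}\circ\bm\Delta_{g_2}=\bm\Delta_{g_1g_2}$, and then reading off equivariance directly from the definition of $\bm\sigma$.
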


\begin{theorem}
\label{Th::SigmabracketCo}
Let $\Phi: G\times M\rightarrow M$ be a cosymplectic Lie group action with a cosymplectic momentum map $\B J^\Phi: M\rightarrow\mathfrak{g}^*$ and let $\sigma: G\rightarrow \mathfrak{g}^*$ be the co-adjoint cocycle associated with $\B J^\Phi$. Define
\begin{equation*}
\sigma_\nu:G\ni g\longmapsto \langle\sigma(g),\nu\rangle\in\mathbb{R}\,,\qquad \Sigma:\mathfrak{g}\times\mathfrak{g}\ni(\xi_1,\xi_2)\longmapsto \T_e\sigma_{\xi_2}(\xi_1) \in\mathbb{R}\,, \quad\forall\nu\in\mathfrak{g}\,.
\end{equation*}
Then,
\begin{enumerate}[{\rm(1)}]
    \item the map $\Sigma$ is a skew-symmetric bilinear form on $\mathfrak{g}$ satisfying that
    \begin{equation}
    \Sigma(\xi,[\zeta,\nu]) + \Sigma(\nu,[\xi,\zeta]) + \Sigma(\zeta,[\nu,\xi]) = 0\,,\qquad \forall \xi,\zeta,\nu\in \mathfrak{g}\,,
    \end{equation}
    \item $\Sigma(\xi,\nu) = \{J_\nu,J_\xi\}_{\tau,\omega}-J_{[\nu,\xi]}$ for all $\xi,\nu\in\mathfrak{g}$.
\end{enumerate}
\end{theorem}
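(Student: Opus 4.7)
The plan is to prove part (2) first, since it supplies the concrete formula from which the abstract properties in (1) follow essentially for free.

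I would start from the explicit description of the co-adjoint cocycle in \eqref{eq::SigmaProperty2}, namely $\sigma(g) = \B J^\Phi(\Phi_g(x)) - \Ad^*_{g^{-1}}\B J^\Phi(x)$, whose right-hand side is (per Proposition \ref{Prop::CosNonAdEquiv}) independent of the chosen $x\in M$. Pairing both sides with $\nu\in\mathfrak{g}$ gives
\[
\sigma_\nu(g) = J_\nu(\Phi_g(x)) - \langle \B J^\Phi(x), \Ad_{g^{-1}}\nu\rangle.
\]
Differentiating at $g=e$ along $\xi\in\mathfrak{g}$, and using $\tfrac{\d}{\d s}\big|_{s=0}\Ad_{\exp(-s\xi)}\nu = -[\xi,\nu]$, one obtains $\Sigma(\xi,\nu) = \xi_M(J_\nu)(x) + J_{[\xi,\nu]}(x)$. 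The key identification is $\xi_M = X_{J_\xi}$: indeed, the definition of a cosymplectic momentum map in \eqref{Eq::CosMomMap} yields $\inn_{\xi_M}\omega = \d J_\xi$ and $\inn_{\xi_M}\tau = 0$, together with $RJ_\xi = 0$, which are exactly the conditions characterising $X_{J_\xi}$ in \eqref{Eq::HamVecField}. Combining this with $\{J_\nu,J_\xi\}_{\tau,\omega} = \omega(X_{J_\nu},X_{J_\xi}) = X_{J_\xi}(J_\nu)$ (the second equality using $\inn_{X_{J_\nu}}\tau = 0$ and $\inn_{X_{J_\xi}}\omega = \d J_\xi - (RJ_\xi)\tau = \d J_\xi$), we get $\Sigma(\xi,\nu) = \{J_\nu,J_\xi\}_{\tau,\omega} + J_{[\xi,\nu]} = \{J_\nu,J_\xi\}_{\tau,\omega} - J_{[\nu,\xi]}$, establishing (2).

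I would then deduce (1) from (2). Bilinearity of $\Sigma$ is immediate from the bilinearity in $(\xi,\nu)$ of both terms on the right. Skew-symmetry follows from the antisymmetry of $\{\cdot,\cdot\}_{\tau,\omega}$ together with that of the Lie bracket on $\mathfrak{g}$. For the Jacobi-type identity, I would sum $\Sigma(\xi,[\zeta,\nu])$ cyclically over $(\xi,\zeta,\nu)$ using (2); the cyclic sum of $J_{[[\zeta,\nu],\xi]}$ vanishes by the Jacobi identity in $\mathfrak{g}$ and the linearity of $\xi\mapsto J_\xi$. For the remaining cyclic sum of Poisson brackets, I would use (2) once more to rewrite $J_{[\zeta,\nu]} = \{J_\zeta,J_\nu\}_{\tau,\omega} + \Sigma(\zeta,\nu)$, where $\Sigma(\zeta,\nu)\in\R$ is a constant. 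Since $X_c = 0$ for any constant $c$ (as $\inn_{X_c}\omega = \d c - (Rc)\tau = 0$ and $\inn_{X_c}\tau = 0$), Poisson brackets with constants vanish, and the cyclic sum of $\{J_{[\zeta,\nu]},J_\xi\}_{\tau,\omega}$ collapses to that of $\{\{J_\zeta,J_\nu\}_{\tau,\omega},J_\xi\}_{\tau,\omega}$, which vanishes by the Jacobi identity of the Poisson bracket $\{\cdot,\cdot\}_{\tau,\omega}$.

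The main obstacle I anticipate is keeping careful track of sign conventions, particularly in the derivative $\tfrac{\d}{\d s}\big|_{s=0}\Ad_{\exp(-s\xi)}\nu = -[\xi,\nu]$ and in the sign of the identification $\xi_M = X_{J_\xi}$, as these determine whether $\Sigma$ ends up with the prescribed sign in the statement. Once these are fixed, the rest of the argument reduces to the Jacobi identities in $\mathfrak{g}$ and in $\{\cdot,\cdot\}_{\tau,\omega}$, combined with the observation that Hamiltonian vector fields of constants vanish on a cosymplectic manifold.
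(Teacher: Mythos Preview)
Your proof is correct. The paper states this theorem without proof, treating it as part of the background survey on cosymplectic reduction (the result is the cosymplectic analogue of a classical fact in symplectic geometry, for which the paper cites \cite{LMZ22,OR04}). Your argument follows exactly the standard route one finds in those references: differentiate the cocycle formula \eqref{eq::SigmaProperty2} to obtain the identity in (2), then read off bilinearity, skew-symmetry, and the $2$-cocycle condition from the Poisson and Lie-algebra Jacobi identities. The only cosymplectic-specific steps---the identification $\xi_M=X_{J_\xi}$ via \eqref{Eq::CosMomMap} and \eqref{Eq::HamVecField}, and the vanishing of $X_c$ for constants $c$---are handled correctly; in particular, the condition $RJ_\xi=0$ in the definition of a cosymplectic momentum map is precisely what makes $\inn_{\xi_M}\omega=\d J_\xi$ coincide with the Hamiltonian-vector-field condition.
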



Recall that if $\mathbf{J}^\Phi$ is an $\Ad^*$-equivariant momentum map, then its associated co-adjoint cocycle satisfies $\sigma(g)=0$ for every $g\in G$. Thus, $\Sigma(\xi,\eta)=0$ for all $\xi,\eta\in\mathfrak{g}$ and the following corollary becomes an immediate consequence of Theorem \ref{Th::SigmabracketCo}.
\begin{corollary}\label{Cor:Lieal2}
    If $\mathbf{J}^\Phi:M\rightarrow\mathfrak{g}^*$ is an $\Ad^*$-equivariant momentum map relative to $(M,\tau,\omega)$, then
    \begin{equation}
    \{J_\xi,J_\eta\}_{\tau,\omega}=J_{[\xi,\eta]}\,,\qquad \forall \xi,\eta\in\mathfrak{g}\,.
    \end{equation}
    In other words, $\lambda:\xi\in \mathfrak{g}\mapsto J_\xi\in \Cinfty(M)$ is a Lie algebra homomorphism.
\end{corollary}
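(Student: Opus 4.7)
The plan is to derive the corollary as a direct specialisation of Theorem \ref{Th::SigmabracketCo}(2) once we verify that the bilinear form $\Sigma$ vanishes identically in the $\Ad^*$-equivariant setting. The argument splits into two small steps plus a final reformulation as a Lie algebra homomorphism statement, so there is no essential obstacle; the proof is routine once the machinery of the preceding proposition and theorem is in hand.

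First, I would invoke the characterisation of $\Ad^*$-equivariance established via the co-adjoint cocycle. By the relation \eqref{eq::SigmaProperty2} in Proposition \ref{Prop::CosNonAdEquiv}, the $\Ad^*$-equivariance condition ${\bf J}^\Phi\circ \Phi_g = \Ad^*_{g^{-1}}\circ {\bf J}^\Phi$ is equivalent to $\sigma(g)=0$ for all $g\in G$. Consequently, for every $\nu\in\mathfrak{g}$, the smooth function $\sigma_\nu\colon G\to\mathbb{R}$, $g\mapsto\langle\sigma(g),\nu\rangle$, vanishes identically, and therefore its differential at the identity satisfies $\T_e\sigma_\nu=0$. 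By the very definition of $\Sigma$ provided in Theorem \ref{Th::SigmabracketCo}, this yields
\begin{equation}
\Sigma(\xi,\nu)=\T_e\sigma_\nu(\xi)=0\,,\qquad\forall \xi,\nu\in\mathfrak{g}\,.
\end{equation}

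Second, I would substitute this identity into part (2) of Theorem \ref{Th::SigmabracketCo}, which asserts $\Sigma(\xi,\nu)=\{J_\nu,J_\xi\}_{\tau,\omega}-J_{[\nu,\xi]}$. Vanishing of $\Sigma$ gives $\{J_\nu,J_\xi\}_{\tau,\omega}=J_{[\nu,\xi]}$ for all $\xi,\nu\in\mathfrak{g}$, which is the claimed identity (after relabelling the indices as $\xi,\eta$).

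Finally, the homomorphism statement for $\lambda\colon\xi\mapsto J_\xi$ follows immediately. The map $\lambda$ is manifestly $\mathbb{R}$-linear because $\langle{\bf J}^\Phi,\cdot\rangle$ is linear in its second slot, and the compatibility with brackets is precisely $\lambda([\xi,\eta])=J_{[\xi,\eta]}=\{J_\xi,J_\eta\}_{\tau,\omega}=\{\lambda(\xi),\lambda(\eta)\}_{\tau,\omega}$. This exhibits $\lambda$ as a Lie algebra homomorphism from $(\mathfrak{g},[\cdot,\cdot])$ into $(\Cinfty(M),\{\cdot,\cdot\}_{\tau,\omega})$, completing the proof.
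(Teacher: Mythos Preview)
Your proposal is correct and follows exactly the approach indicated in the paper: the paragraph immediately preceding the corollary already notes that $\Ad^*$-equivariance forces $\sigma\equiv 0$, hence $\Sigma\equiv 0$, so the corollary is an immediate consequence of Theorem~\ref{Th::SigmabracketCo}(2). You have simply spelled out this one-line observation in full detail, including the Lie algebra homomorphism reformulation.
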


Let us give some technical definitions that are useful to remove certain conditions appearing in many Marsden--Weinstein reduction theories \cite{MRSV15}.

Recall that a {\it weakly regular value} of ${\bf J}^\Phi:M\rightarrow \Lg^*$ is a point $\mu\in\Lg^*$ such that ${\bf J}^{\Phi-1}(\mu)$ is a submanifold in $M$ and $\T_x({\bf J}^{\Phi-1}(\mu))=\ker \T_x{\bf J}^\Phi$ for every $x\in {\bf J}^{\Phi-1}(\mu)$ (see \cite{Al89}). It is hereafter assumed that $\mu\in \mathfrak{g}^*$ is a weakly regular value of ${\bf J}^\Phi$. Additionally, we also assume that the isotropy subgroup $G_\mu^\Delta$ of $\mu\in\Lg^*$ relative to the affine Lie group action $\Delta:G\times\mathfrak{g}^*\rightarrow\mathfrak{g}^*$ acts via $\Phi$ on ${\bf J}^{\Phi-1}(\mu)$ in a {\it quotientable manner}, namely ${\bf J}^{\Phi-1}(\mu)/G_\mu^\Delta$ is a manifold and the projection $\pi_\mu:{\bf J}^{\Phi-1}(\mu)\rightarrow {\bf J}^{\Phi-1}(\mu)/G_\mu^\Delta$ is a submersion. To guarantee that ${\bf J}^{\Phi-1}(\mu)/G^\Delta_\mu$ is a manifold, one may require  $G^\Delta_\mu$ to act freely and properly on ${\bf J}^{\Phi-1}(\mu)$ (see \cite{AM78,Al89,LMZ22} for details).

Let us enunciate, without proofs, the generalisation to the cosymplectic realm of the standard symplectic Marsden--Weinstein reduction (see \cite{Al89,LMZ22} for details).

\begin{lemma}\label{lem:NonAdPerp}
    Let $\mu\in\g^*$ be a weak regular value of a momentum map $\bfJ^\Phi\colon M\rightarrow\g^*$ associated with a cosymplectic Lie group action $\Phi: G\times M\rightarrow M$ relative to $(M,\tau,\omega)$ and let $G^\Delta_\mu$ be the isotropy group at $\mu\in\g^*$ of the action $\Delta \colon G\times\g^*\rightarrow\g^*$ relative to the co-adjoint cocycle $\sigma\colon G\rightarrow\g^*$ of $\bfJ^\Phi$. Then, for every $x\in\bfJ^{\Phi-1}(\mu)$, one has 
    \begin{enumerate}[{\rm(1)}]
        \item $\T_x(G^\Delta_{\mu} x) = \T_x(G x)\cap \T_x(\bfJ^{\Phi-1}(\mu))$, 
        \item $\T_x(\bfJ^{\Phi-1}(\mu)) = \T_x(Gx)^{\perp_\omega}$,
        \item $\left(\T_x\bfJ^{\Phi-1}(\mu)\right)^{\perp_\omega} = \T_x(Gx)\oplus\langle R_x\rangle$.
    \end{enumerate}
\end{lemma}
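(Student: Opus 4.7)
The plan is to prove the three statements in order, using the affine equivariance of the momentum map from Proposition \ref{Prop::GenEqJ}, the defining relation $\iota_{\xi_M}\omega=\d J_\xi$, and a dimension count exploiting that $\ker\omega_x=\langle R_x\rangle$ at every point $x\in M$.

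For \textbf{(1)}, I would start by computing the fundamental vector field $\xi_{\mathfrak{g}^*}$ of the affine action $\Delta$ at $\mu$. Differentiating $\Delta_{\exp(t\xi)}\mu=\Ad^*_{\exp(-t\xi)}\mu+\sigma(\exp(t\xi))$ at $t=0$ gives $\xi_{\mathfrak{g}^*}(\mu)=-\mathrm{ad}^*_\xi\mu+\T_e\sigma(\xi)$. By the $\Delta$-equivariance of $\bfJ^\Phi$, one gets $\T_x\bfJ^\Phi(\xi_M(x))=\xi_{\mathfrak{g}^*}(\bfJ^\Phi(x))=\xi_{\mathfrak{g}^*}(\mu)$ for $x\in\bfJ^{\Phi-1}(\mu)$. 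Hence, using weak regularity (so $\T_x\bfJ^{\Phi-1}(\mu)=\ker\T_x\bfJ^\Phi$), the condition $\xi_M(x)\in\T_x(\bfJ^{\Phi-1}(\mu))$ is equivalent to $\xi_{\mathfrak{g}^*}(\mu)=0$, i.e.\ $\xi\in\mathfrak{g}^\Delta_\mu$. This yields both inclusions at once: $\T_x(G^\Delta_\mu x)=\{\xi_M(x):\xi\in\mathfrak{g}^\Delta_\mu\}=\T_x(Gx)\cap\T_x(\bfJ^{\Phi-1}(\mu))$.

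For \textbf{(2)}, I would use the characterisation $v_x\in\T_x(\bfJ^{\Phi-1}(\mu))\iff v_x(J_\xi)=0$ for every $\xi\in\mathfrak{g}$, together with the defining identity $\iota_{\xi_M}\omega=\d J_\xi$. Indeed, $\omega_x(v_x,\xi_M(x))=-(\d J_\xi)_x(v_x)=-v_x(J_\xi)$, so $v_x\in\T_x(Gx)^{\perp_\omega}$ if and only if $v_x(J_\xi)=0$ for every $\xi\in\mathfrak{g}$, which is exactly the condition $\T_x\bfJ^\Phi(v_x)=0$, i.e.\ $v_x\in\T_x(\bfJ^{\Phi-1}(\mu))$.

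For \textbf{(3)}, the strategy is to combine (2) with a dimensional argument based on the degeneracy of $\omega$. Since $\iota_{\xi_M}\tau=0$ for every $\xi\in\mathfrak{g}$ but $\iota_R\tau=1$, the Reeb vector $R_x$ is not in $\T_x(Gx)$, so the sum $\T_x(Gx)\oplus\langle R_x\rangle$ is direct. Moreover, $R_x\in\ker\omega_x$ implies $R_x\in V^{\perp_\omega}$ for every subspace $V\subseteq\T_xM$, and in particular $\T_x(Gx)+\langle R_x\rangle\subseteq(\T_x\bfJ^{\Phi-1}(\mu))^{\perp_\omega}$ once (2) is applied. For the reverse inclusion I would use the flat map $\flat_\omega\colon v_x\mapsto\iota_{v_x}\omega_x$, whose kernel is $\langle R_x\rangle$ and whose image is the annihilator $\langle R_x\rangle^\circ\subseteq\cT_xM$. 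This gives $\dim V^{\perp_\omega}=1+\dim\bigl((V+\langle R_x\rangle)^\circ\cap\langle R_x\rangle^\circ\bigr)=2n+2-\dim(V+\langle R_x\rangle)$ for any $V\subseteq\T_xM$. Applying this to $V=\T_x(Gx)$ (so $\dim V^{\perp_\omega}=2n+1-\dim\T_x(Gx)$) and then to $V=\T_x(\bfJ^{\Phi-1}(\mu))=V_0^{\perp_\omega}$, where now $R_x\in V$ and so $\dim(V+\langle R_x\rangle)=\dim V$, one obtains $\dim(\T_x\bfJ^{\Phi-1}(\mu))^{\perp_\omega}=\dim\T_x(Gx)+1=\dim(\T_x(Gx)\oplus\langle R_x\rangle)$. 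The inclusion plus matching dimensions forces equality.

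The main obstacle I anticipate is part (3): the degeneracy of $\omega$ prevents the naive symplectic identity $(V^{\perp})^{\perp}=V$ from holding, so one must carefully control the one-dimensional discrepancy produced by $\ker\omega_x=\langle R_x\rangle$. The flat-map dimension formula above, together with the decisive observation that $\T_x(Gx)\subseteq\ker\tau_x$ (so that $R_x$ lies outside $\T_x(Gx)$), is what makes the argument close cleanly.
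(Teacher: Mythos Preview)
The paper does not actually give a proof of this lemma; it states the result and refers the reader to \cite{Al89,LMZ22}. Your argument is correct and is essentially the standard one found in those references: part~(1) via $\bm\Delta$-equivariance, part~(2) via $\iota_{\xi_M}\omega=\d J_\xi$, and part~(3) via a dimension count exploiting $\ker\omega_x=\langle R_x\rangle$. One point worth making explicit in part~(3): when you assert ``now $R_x\in V$'' for $V=\T_x(\bfJ^{\Phi-1}(\mu))$, this is exactly where the condition $RJ_\xi=0$ from the definition of a cosymplectic momentum map enters; it guarantees the Reeb vector is tangent to the level set, so that $\dim(V+\langle R_x\rangle)=\dim V$ and the dimension count closes.
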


\begin{theorem}\label{Th:CoSymRed}
Let $\Phi: G\times M\rightarrow M$ be a cosymplectic Lie group action relative to the cosymplectic manifold $(M,\tau,\omega)$ and associated with a cosymplectic momentum map ${\bf J}^\Phi: M\rightarrow\mathfrak{g}^*$. Assume that $\mu\in\mathfrak{g}^*$ is a weakly regular value of ${\bf J}^\Phi$ and let ${\bf J}^{\Phi-1}(\mu)$ be quotientable with respect to the action of $G_\mu^\Delta$ induced by  $\Phi$. 
Let $\jmath_\mu:{\bf J}^{\Phi-1}(\mu)\hookrightarrow M$ be the natural immersion and let $\pi_\mu:{\bf J}^{\Phi-1}(\mu)\rightarrow M^\Delta_\mu={\bf J}^{\Phi-1}(\mu)/G^\Delta_\mu$ be the canonical projection. Then, there exists a unique cosymplectic manifold $(M_\mu^\Delta,\tau_\mu,\omega_\mu)$ such that
\begin{equation}
\label{Eq::CoSymRed}
\jmath_\mu^*\tau = \pi_\mu^*\tau_\mu\,,\qquad \jmath_\mu^*\omega = \pi_\mu^*\omega_\mu\,.
\end{equation}
\end{theorem}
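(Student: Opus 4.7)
The plan is to descend $\jmath_\mu^*\tau$ and $\jmath_\mu^*\omega$ along $\pi_\mu$ to obtain $\tau_\mu$ and $\omega_\mu$, and then check the cosymplectic conditions on the quotient. First I would verify that $G_\mu^\Delta$ preserves $\mathbf{J}^{\Phi-1}(\bm\mu)$, which is immediate from the $\Delta$-equivariance of $\mathbf{J}^\Phi$ given by Proposition \ref{Prop::GenEqJ}: for $g\in G_\mu^\Delta$ and $x\in\mathbf{J}^{\Phi-1}(\mu)$, $\mathbf{J}^\Phi(\Phi_g(x))=\Delta_g(\mu)=\mu$. Next, each of $\jmath_\mu^*\tau$ and $\jmath_\mu^*\omega$ is $G_\mu^\Delta$-invariant (because $\Phi$ is cosymplectic) and horizontal for the $G_\mu^\Delta$-action on $\mathbf{J}^{\Phi-1}(\mu)$: for every $\xi\in\mathfrak{g}_\mu^\Delta$ one has $\iota_{\xi_M}\tau=0$ by hypothesis, and $\jmath_\mu^*(\iota_{\xi_M}\omega)=\jmath_\mu^*\d J_\xi=\d(J_\xi\circ\jmath_\mu)=0$ because $J_\xi\equiv\langle\mu,\xi\rangle$ on $\mathbf{J}^{\Phi-1}(\mu)$. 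Since $\pi_\mu$ is a surjective submersion, this produces unique forms $\tau_\mu\in\Omega^1(M_\mu^\Delta)$ and $\omega_\mu\in\Omega^2(M_\mu^\Delta)$ satisfying \eqref{Eq::CoSymRed}; closedness transfers at once from $\pi_\mu^*\d\tau_\mu=\jmath_\mu^*\d\tau=0$ and the analogous identity for $\omega_\mu$, together with the injectivity of $\pi_\mu^*$ on forms.

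The main obstacle is the decomposition $\ker\tau_\mu\oplus\ker\omega_\mu=\T M_\mu^\Delta$, which I would check pointwise at $[x]\in M_\mu^\Delta$ by describing both kernels inside $\T_x\mathbf{J}^{\Phi-1}(\mu)/\T_x(G_\mu^\Delta x)$. For $\ker\omega_\mu$, a class $[v]$ belongs there precisely when $v\in(\T_x\mathbf{J}^{\Phi-1}(\mu))^{\perp_\omega}\cap\T_x\mathbf{J}^{\Phi-1}(\mu)$; by Lemma \ref{lem:NonAdPerp}(2)--(3) this intersection equals $(\T_x(Gx)\oplus\langle R_x\rangle)\cap\T_x(Gx)^{\perp_\omega}$, and using $\iota_R\omega=0$ and Lemma \ref{lem:NonAdPerp}(1) it reduces to $\T_x(G_\mu^\Delta x)\oplus\langle R_x\rangle$. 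Quotienting by $\T_x(G_\mu^\Delta x)$ leaves $\ker\omega_\mu([x])=\langle[R_x]\rangle$, which is genuinely nonzero since $\tau(R_x)=1$ while $\T_x(G_\mu^\Delta x)\subseteq\ker\tau_x$ (because $\iota_{\xi_M}\tau=0$).

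Finally, the complementarity $\ker\tau_\mu([x])\oplus\langle[R_x]\rangle=\T_{[x]}M_\mu^\Delta$ follows from the cosymplectic splitting $\T_xM=\ker\tau_x\oplus\langle R_x\rangle$ together with $R_x\in\T_x\mathbf{J}^{\Phi-1}(\mu)$ (ensured by $RJ_\xi=0$): every $w\in\T_x\mathbf{J}^{\Phi-1}(\mu)$ decomposes uniquely as $w=w_0+\tau(w)R_x$ with $w_0\in\ker\tau_x\cap\T_x\mathbf{J}^{\Phi-1}(\mu)$, and projecting this identity gives the desired direct-sum decomposition on the quotient, using that $\tau_\mu([R_x])=\tau(R_x)=1$ while the projected $\ker\tau_x\cap\T_x\mathbf{J}^{\Phi-1}(\mu)$ coincides with $\ker\tau_\mu([x])$. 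The only delicate point in the whole argument is that the Reeb direction survives the reduction and spans the kernel of the reduced two-form, so the quotient inherits a genuine cosymplectic rather than symplectic structure; this hinges on the two compatibility conditions $\iota_{\xi_M}\tau=0$ and $RJ_\xi=0$ built into the definition of a cosymplectic momentum map.
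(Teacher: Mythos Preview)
Your argument is correct. The descent of $\jmath_\mu^*\tau$ and $\jmath_\mu^*\omega$ is justified exactly as you say, and your identification of $\ker\omega_\mu([x])$ with $\langle [R_x]\rangle$ via Lemma~\ref{lem:NonAdPerp} is clean: writing any element of $(\T_x(Gx)\oplus\langle R_x\rangle)\cap \T_x(Gx)^{\perp_\omega}$ as $u+cR_x$ and using $\iota_R\omega=0$ forces $u\in \T_x(Gx)\cap \T_x(Gx)^{\perp_\omega}=\T_x(G_\mu^\Delta x)$, so the preimage of $\ker\omega_\mu$ is precisely $\T_x(G_\mu^\Delta x)\oplus\langle R_x\rangle$. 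The splitting argument for $\ker\tau_\mu\oplus\ker\omega_\mu=\T M_\mu^\Delta$ is likewise correct, and $\tau_\mu\neq 0$ follows from $\tau_\mu([R_x])=1$.

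The paper itself does not prove Theorem~\ref{Th:CoSymRed}; it merely states the result and refers to \cite{Al89,LMZ22}. What the paper \emph{does} do, in Section~\ref{Sec:CosymSym}, is derive the same reduced cosymplectic structure by a different route: it passes to the symplectic manifold $(\R\times M,\widetilde\omega={\rm pr}^*\omega+\d u\wedge{\rm pr}^*\tau)$, extends $\Phi$ and $\mathbf{J}^\Phi$ trivially in the $\R$-factor, performs the classical symplectic Marsden--Weinstein reduction to obtain $(\R\times M_\mu^\Delta,\widetilde\omega_\mu)$, and then reads off $\tau_\mu=\iota_u^*(\iota_{\partial/\partial u}\widetilde\omega_\mu)$ and $\omega_\mu=\iota_u^*\widetilde\omega_\mu$. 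Your approach is the intrinsic one, working directly on $M$ with the cosymplectic orthogonal and Lemma~\ref{lem:NonAdPerp}; the paper's alternative instead leverages the equivalence between cosymplectic and fibred symplectic geometry (Lemma~\ref{lem:CosymSym}) so that only the symplectic reduction theorem is needed. Your route is more self-contained and makes the role of the conditions $\iota_{\xi_M}\tau=0$ and $RJ_\xi=0$ transparent; the paper's route is the template for the $k$-polycosymplectic reduction developed later via $k$-polysymplectic fibred manifolds.
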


\subsection{Cosymplectic and symplectic Marsden--Weinstein reductions}
\label{Sec:CosymSym}

Let us recall how the cosymplectic Marsden--Weinstein reduction can be reformulated by using the classical Marsden--Weinstein reduction theorem (see \cite{LS93} for details). To start with, the following lemma ensures that every cosymplectic manifold naturally gives rise to a symplectic form on a manifold of a larger dimension (see \cite{LMZ22} for details). This idea will be generalised in the following sections to devise a $k$-polycosymplectic Marsden--Weinstein reduction from a $k$-polysymplectic one.

\begin{lemma}\label{lem:CosymSym}
    Let $\omega'\in\Omega^2(M)$, $\tau'\in\Omega^1(M)$ and consider the canonical projection ${\rm pr}\colon\R\times M\rightarrow M$. Let $u$ be the pull-back to $\mathbb{R}\times M$ of the natural coordinate in $\R$ relative to the projection $\pi_\R:\R\times M\rightarrow \R$. Then, $(M,\tau',\omega')$ is a cosymplectic manifold if and only if $(\R\times M,\,\widetilde{\omega}' = {\rm pr}^*\omega' + \d u\wedge {\rm pr}^*\tau')$ is a symplectic manifold. Moreover, ${\rm pr}$ is a Poisson morphism, i.e.
    $$
    \{f_1,f_2\}_{\tau,\omega}\circ {\rm pr}=\{f_1\circ {\rm pr},f_2\circ {\rm pr}\}_{\widetilde{\omega}'}\,,\qquad \forall f_1,f_2\in \Cinfty(\R\times M)\,. 
    $$
\end{lemma}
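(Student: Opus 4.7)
The plan is to split the equivalence into the two defining features of a symplectic/cosymplectic structure (closedness and non-degeneracy), exploit the product structure of $\R\times M$, and then verify the Poisson morphism property in Darboux coordinates.

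First, I would prove the closedness equivalence. A direct computation gives
\[
\d \widetilde{\omega}' = {\rm pr}^*\d\omega' - \d u \wedge {\rm pr}^*\d\tau'.
\]
Because $\d u$ is nowhere zero and vanishes on ${\rm pr}$-horizontal directions while ${\rm pr}^*\d\omega'$ lies in the image of ${\rm pr}^*$ (and in particular $\iota_{\partial_u}{\rm pr}^*\d\omega' = 0$), the two summands are pointwise linearly independent whenever either is nonzero. Hence $\d\widetilde{\omega}' = 0$ is equivalent to $\d\omega' = 0$ and $\d\tau' = 0$ simultaneously.

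Next, I would establish non-degeneracy through the top exterior power. Assuming $\dim M = 2n+1$ (which is forced in either direction: in the cosymplectic case by definition; in the symplectic case because $\R\times M$ must be even-dimensional), I observe that $(\d u\wedge {\rm pr}^*\tau')\wedge(\d u\wedge {\rm pr}^*\tau')=0$ and that $({\rm pr}^*\omega')^{n+1}={\rm pr}^*(\omega'^{n+1})=0$ for degree reasons on $M$. The binomial expansion therefore collapses to
\[
(\widetilde{\omega}')^{n+1} = (n+1)\,({\rm pr}^*\omega')^n\wedge \d u\wedge {\rm pr}^*\tau' = (n+1)\,\d u\wedge {\rm pr}^*(\omega'^n\wedge \tau').
\]
This $(2n+2)$-form is a volume form on $\R\times M$ if and only if $\omega'^n\wedge\tau'$ is a volume form on $M$, which is the standard characterisation of the cosymplectic condition. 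Combined with Step 1, this proves the ``iff''.

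Finally, for the Poisson morphism claim, I would pick cosymplectic Darboux coordinates $(t,q^i,p_i)$ on $M$ (using the Darboux theorem recalled in Section \ref{Sec::CosGeo}), so that $\tau' = \d t$, $\omega' = \d q^i\wedge \d p_i$ and $\widetilde{\omega}' = \d q^i\wedge \d p_i + \d u\wedge \d t$ in the coordinates $(u,t,q^i,p_i)$ on $\R\times M$. For $f\in \Cinfty(M)$, the symplectic Hamiltonian vector field of ${\rm pr}^*f$ is
\[
X_{{\rm pr}^*f} = \parder{f}{p_i}\parder{}{q^i}-\parder{f}{q^i}\parder{}{p_i}+\parder{f}{t}\parder{}{u},
\]
which projects under ${\rm pr}_*$ onto the cosymplectic Hamiltonian vector field $X_f$. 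A direct pairing then gives
\[
\{{\rm pr}^*f_1,{\rm pr}^*f_2\}_{\widetilde{\omega}'} = \parder{f_1}{p_i}\parder{f_2}{q^i}-\parder{f_1}{q^i}\parder{f_2}{p_i} = \{f_1,f_2\}_{\tau',\omega'}\circ {\rm pr},
\]
since the extra $\partial_u$-components are paired against $\d u\wedge\d t$ with pull-backs independent of $u$, and cancel. Globality follows because both sides are tensorial.

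The only mild subtlety is the non-degeneracy step, where one must recognise that the two directions of the ``iff'' both assume (or force) $M$ to be $(2n+1)$-dimensional so that $({\rm pr}^*\omega')^{n+1}$ vanishes identically; once that parity observation is in place, the computation of $(\widetilde{\omega}')^{n+1}$ is routine and everything else follows from an explicit Darboux calculation.
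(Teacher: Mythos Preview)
The paper does not supply its own proof of this lemma; it states the result and refers the reader to \cite{LMZ22} for details. Your argument is correct and follows the expected route: closedness by separating the $\d u$-dependent and $\d u$-independent parts of $\d\widetilde{\omega}'$, non-degeneracy via the top exterior power and the volume-form characterisation of cosymplectic structures, and the Poisson morphism by a direct Darboux computation. One small remark: the statement as printed has $f_1,f_2\in\Cinfty(\R\times M)$, which is a typo (the left-hand side only makes sense for $f_1,f_2\in\Cinfty(M)$); you have interpreted this correctly.
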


Let $\Psi\colon G\times M\rightarrow M$ be a cosymplectic Lie group action with an associated cosymplectic momentum map $\bfJ^\Psi\colon M\rightarrow\g^*$ relative to $(M,\tau,\omega)$. Then, $\Psi$ and $\bfJ^\Psi$ can be extended to $\R\times M$ in the following ways, respectively,
\begin{equation}
    \widetilde{\Psi}\colon (g,u,x)\in G\times \R\times M\mapsto  (u,\Psi_g(x))\in \R\times M
\end{equation}
and
\begin{equation}
    \bfJ^{\wtl\Psi}:(u,x) \in \R\times M\mapsto \bfJ^\Psi(x)\in\g^*\,.
\end{equation}
The fundamental vector fields $\xi_M$, with $\xi\in\mathfrak{g}$, related to $\Psi$, can be understood as vector fields on $\R\times M$ via the isomorphisms $\T_{(u,x)}(\R\times M)\simeq \T_u \R\times \T_x M$ for every $(u,x)\in \R\times M$. In this manner, they are locally Hamiltonian relative to $\widetilde \omega$ if and only if $\d(R J_\xi) = 0$ (see \cite{LMZ22}). Hence, the condition $R J_\xi = 0$ ensures that $\widetilde \Psi$ admits a momentum map $\bfJ^{\wtl\Psi}$ relative to the symplectic manifold $(\R\times M,\widetilde\omega)$, namely $\inn_{\xi_{\R\x M}}\widetilde{\omega}=\d\<\mathbf{J}^{\wtl\Psi},\xi\>$. Moreover, if $\bfJ^\Psi$ is $\Delta$-equivariant with respect to $\Psi$, then $\mathbf{J}^{\widetilde\Psi}$ is also $\Delta$-equivariant with respect to $\widetilde\Psi$. Further, since $\mathbf{J}^{\widetilde \Psi-1}(\mu)\simeq\R\times {\mathbf{J}}^{\Psi-1}(\mu)$ for every $\mu\in\Lg^*$ and ${\rm pr}\circ \widetilde{\Psi}_g=\Psi_g\circ {\rm pr}$ for every $g\in G$, i.e. $\widetilde\Psi$ does not change the first component of $\R\times M$, then $\mathbf{J}^{\widetilde \Psi-1}(\mu)$ is quotientable by $G^\Delta_\mu$ if and only if ${\mathbf{J}}^{\Psi-1}(\mu)$ is so. Moreover, $\mu\in\g^*$ is a (resp. weak) regular value of $ {\mathbf{J}}^\Psi$ if and only if $\mu$ is a (resp. weak) regular value of $\mathbf{J}^{\widetilde \Psi}$. Therefore, we can apply the Marsden--Weinstein reduction theorem to the symplectic manifold $(\R\times M,\widetilde\omega)$ to obtain a reduced symplectic manifold $\widetilde M^\Delta_\mu = \mathbf{J}^{\widetilde \Psi-1}(\mu)/G_\mu^\Delta$ endowed with the {\it reduced symplectic form}, $\widetilde\omega_\mu$, determined univocally by the condition
\begin{equation}
    \widetilde \jmath_\mu^{\;*}\widetilde\omega=\widetilde\pi_\mu^*\widetilde\omega_\mu\,,
\end{equation}
where $\widetilde\jmath_\mu:\mathbf{J}^{\widetilde \Psi-1}(\mu)\hookrightarrow \R\times M$ is the natural immersion and $\widetilde\pi_\mu:\mathbf{J}^{\widetilde \Psi-1}(\mu)\rightarrow \mathbf{J}^{\widetilde \Psi-1}(\mu)/G^\Delta_\mu$ is the canonical projection \cite{AM78}. Note that $\widetilde M^\Delta_\mu\simeq (\R\times\mathbf{J}^{ \Psi-1}(\mu))/G^\Delta_\mu\simeq\R\times M^\Delta_\mu$, where $M^\Delta_\mu=\mathbf{J}^{\Psi-1}(\mu)/G^\Delta_\mu$. Then, a reduced cosymplectic manifold $(M_\mu^\Delta, \tau_\mu,\omega_\mu)$ can be retrieved from $\widetilde\omega_\mu$ in the following way
\begin{equation*}
     \tau_\mu = \iota_u^*\left(\inn_{\tparder{}{u}}\widetilde\omega_\mu\right)\,,\qquad \omega_\mu = \iota_u^*\widetilde\omega_\mu\,,
\end{equation*}
where $\iota_u:M^\Delta_\mu\ni[x]\mapsto(u,[x])\in\R\times M^\Delta_\mu$ and $[x]$ stands for the orbit of $x\in\mathbf{J}^{\Psi-1}(\mu)$ relative to $G^\Delta_\mu$. In particular,
\begin{equation}
    \d\omega_\mu = \d\iota_u^*\widetilde{\omega}_\mu = \iota_u^*\d\widetilde{\omega}_\mu = 0\,,\qquad \d\tau_\mu = \d\iota_u^*\left(\inn_{\tparder{}{u}}\widetilde\omega_\mu\right) = \iota_u^*\left(\Lie_{\tparder{}{u}}\widetilde\omega_\mu\right) = 0\,,
\end{equation}
where the last equality holds since $\Lie_{\tparder{}{u}}\widetilde\omega_\mu = 0$. Moreover, if $X\in\mathfrak{X}(M^\Delta_\mu)$,  then $\iota_{u*}X$ takes values in $\ker \d u$. If, additionally,  $\inn_X\omega_\mu = 0$ and $\inn_X\tau_\mu = 0$, then $\iota_{\iota_{u*}X}\widetilde{\omega}_\mu=0$ and $X=0$ because $\widetilde{\omega}_\mu$ is symplectic. Hence, $\ker\omega_\mu\cap\ker\tau_\mu=0$. A Reeb vector field $R$ on $M$ gives rise to a unique  vector field $\widetilde{R}$ on $\mathbb{R}\times M$ projecting onto $M$ via ${\rm pr}$  and taking values in $\ker \d u$. Since $\widetilde{R}$ is tangent to ${\bf J}^{\widetilde{\Psi}-1}(\mu)$ and projectable onto a vector field $\widetilde{R}_\mu$ on $\widetilde{M}^\Delta_\mu$, one has that $\iota_{R_\mu}\tau_\mu=\iota_{\widetilde{R}_\mu}\iota_{\partial/\partial u}\widetilde{\omega}_\mu=\iota_R\iota_{\partial/\partial u}\omega=1$. Hence, $\tau_\mu$ is different from zero, and $(M^\Delta_\mu,\tau_\mu,\omega_\mu)$ becomes a cosymplectic manifold. It can be shown that $(M^\Delta_\mu,\tau_\mu,\omega_\mu)$ does not depend on the particular map $\iota_u$. Indeed, this fact can be considered as a particular consequence of the proof of Theorem \ref{Th::kpolycoreduction}. 

The above approach shows that the cosymplectic Marsden--Weinstein reduction can be obtained through a symplectic reduction on $\R\times M$ of a particular type. Note that instead of $T=\R$, one can also consider $T=\mathbb{S}^1$ endowed with $\d\theta$, where $\theta$ is a locally defined angular coordinate on $\mathbb{S}^1$ giving rise to a global closed differential one-form. The above procedure is analogous in this latter case.

What has not been stressed so far in the literature is that the above discussion also implies that cosymplectic geometry can be understood as a type of symplectic geometry in a larger manifold. Although this can be of interest, it is known that the extension of mathematical entities on a cosymplectic manifold to a symplectic one of a larger dimension may change the properties of such entities in such a way that their study may be more complicated. This may happen, for instance, in the study of energy-momentum methods  \cite{LMZ22}.



\section{Fundamentals on geometric field theory}
\label{Sec::FundFieldTh}

Let us review the geometric fundamentals needed to develop our further geometric formulation for Hamiltonian field theories (see \cite{Aw92, LM98,Gra2020, Gu87,  MC11, MRSV15, MLM10} for details on $k$-polysymplectic and $k$-polycosymplectic formalisms). We hereafter assume that $\mathbb{R}^k$ has a fixed basis $\{e_1,\ldots,e_k\}$ giving rise to a dual basis $\{e^1,\ldots, e^k\}$ in $\R^{k*}$.

\subsection{\texorpdfstring{$k$}--Polysymplectic and \texorpdfstring{$k$}--symplectic manifolds}

This section surveys the theory of $k$-polysymplectic, polysymplectic, $k$-symplectic structures, and related concepts that appear in the literature and the terminology to be employed in this work. This introduces the definitions to be used and allows one to understand the relations between the results of our work and other previous ones. This description is quite relevant as the terminology appearing in the literature is not unified and the same term can refer to different not equivalent geometric concepts. Some examples of this can be found in the foundational works by Günther \cite{Gu87} and Awane \cite{Aw92}.

\begin{definition}
A {\it $k$-polysymplectic form} on $M$ is a closed non-degenerate $\R^k$-valued differential two-form
    $$ \bm{\omega} = \omega^\alpha\otimes e_\alpha\in\Omega^2(M,\R^k)\,. $$
The pair $(M,\bm{\omega})$ is called a {\it $k$-polysymplectic manifold}.
\end{definition}

$k$-Polysymplectic manifolds are called, for simplicity, polysymplectic manifolds in the literature (see \cite{MRSV15} for instance). Nevertheless, the latter term refers to a different notion that is shown below. That is why we will not simplify the term `$k$-polysymplectic manifold' and other related ones in our work so as to avoid  misunderstandings. 
It is worth noting that $M$ has a $k$-polysymplectic form $\bm{\omega}$ if, and only if, there exists a family of $k$ closed two-forms $\omega^1,\dotsc,\omega^k\in\Omega^2(M)$ such that
$$ 
\ker \bm{\omega}=\ker (\omega^\alpha\otimes e_\alpha)=\bigcap_{\alpha = 1}^k \ker\omega^\alpha = 0\,. 
$$
Hereafter, $\R^k$-valued differential forms will be written in bold.
Now, we can define polysymplectic manifolds as follows.

\begin{definition}\label{Def::CoNotions}
    Let $M$ be an $n(k+1)$-dimensional manifold. Then,
    \begin{itemize}
        \item A {\it polysymplectic structure} on $M$ is a differential two-form taking values in $\mathbb{R}^k$ given by $\bomega=\omega^\alpha\otimes e_\alpha\in\Omega^2(M,\R^k)$ for certain $\omega^1,\dotsc,\omega^k\in\Omega^2(M)$ such that
        $$
        \ker \bomega=\bigcap_{\alpha = 1}^k \ker\omega^\alpha =0\,. 
        $$
        In this case, $(M,\bomega)$ is called a {\it polysymplectic manifold}.
        \item A {\it $k$-symplectic structure} on $M$ is a pair $(\bomega,V)$, where $(M,\bomega)$ is a polysymplectic manifold and $V\subset\T M$ is an integrable distribution on $M$ of rank $nk$ such that
        $$ \restr{\bomega}{V\times V} = 0\,. $$
        In this case, $(M,\bm\omega,V)$ is a {\it $k$-symplectic manifold}. We call $V$ a {\it polarisation} of the $k$-symplectic manifold.
    \end{itemize}
    If the two-form $\bm\omega$ is exact, namely $\bomega = \d\bm\theta$ for some $\bm\theta \in\Omega^1(M,\R^k)$, in any of the above-mentioned concepts introduced in Definition \ref{Def::CoNotions}, then such concepts are said to be {\it exact}.
\end{definition}

The previous $k$-symplectic manifold notion coincides with the one given by A.~Awane \cite{Aw92, Aw00}. 
In addition, it is locally equivalent to the concepts of 
\textit{standard polysymplectic structure} of C.~Günther \cite{Gu87} 
(they are globally equivalent provided there exist compatible Darboux charts\footnote{Note that it is not clear what G\"unther means by an atlas of canonical charts, namely which is the equivalence between different pairs of Darboux charts.}) and globally equivalent to the
\textit{integrable $p$-almost cotangent structure} introduced by M.~de León \textit{et al} \cite{dLe88, dLe93}. 
In the case of $k=1$, Awane's definition reduces to the notion of \textit{polarised symplectic manifold}, 
namely a symplectic manifold with a Lagrangian distribution  \cite{LV2013}. 

G\"unther calls polysymplectic manifolds the differential geometric structures obtained from our $k$-symplectic definition by removing the existence of the distribution $V$. 
Meanwhile, a \textit{standard} polysymplectic manifold in G\"unther's paper \cite{Gu87} is a polysymplectic manifold admitting local Darboux coordinates, 
which is equivalent to our definition of a $k$-symplectic manifold. Note that the condition concerning the polarisation $V$ in the definition of a $k$-symplectic structure is necessary to ensure the existence of an atlas of compatible Darboux-type coordinates (see \cite{Aw92, GLRR22} and \cite[p. \!57]{PhDThesisXRG}) and vice versa. 

It is very important to remark that the polysymplectic reduction in \cite{MRSV15} does not rely on any relationship between the dimension of the manifold and the number $k$ of the $k$-polysymplectic form $\bm\omega$ on it. It is also relevant to stress that the polysymplectic manifold term in \cite{MRSV15} is just a simplification of the term $k$-polysymplectic manifold, which is defined in our work. Finally, Definition \ref{Def::CoNotions} leads to a linear analogue definition by assuming ${\bm \omega}$ to be restricted to a point $x\in M$. This allows us to define $k$-{\it polysymplectic structures on linear spaces}, {\it $k$-polysymplectic spaces}, and so on. In such cases, one assumes ${\bm \omega}\in \Lambda^2E^*\otimes\mathbb{R}^k$, where $E$ is a vector space and $\Lambda^2E^*$ stands for the space of two-covectors on $E$ while $V$ is substituted by a linear subspace $W\subset E$.


\begin{example}[Canonical model for $k$-symplectic manifolds]\label{ex:canonical-model-k-symplectic}\rm
    Let $Q$ be an $n$-dimensional manifold and consider the Whitney sum
    $$ \bigoplus^k\cT Q = \cT Q\oplus_Q\overset{(k)}{\dotsb}\oplus_Q \cT Q\,, $$
    with natural projections $\pi^\alpha\colon\bigoplus^k\cT Q\to\cT Q$, from the $\alpha$-th component of $\bigoplus^k\cT Q$ onto $\cT Q$, with $\alpha=1,
\ldots,k$, and $\pi_Q\colon\bigoplus^k\cT Q\to Q$. A coordinate system $\{q^i\}$ in $Q$ induces a natural coordinate system $\{q^i,p_i^\alpha\}$ in $\bigoplus^k\cT Q$, where $\alpha$ ranges from $1$ to $k$. Consider the canonical forms in the cotangent bundle $\T^*Q$  of $Q$ given by $\theta\in\Omega^1(\cT Q)$ and $\omega = -\d\theta\in\Omega^2(\cT Q)$. Hence, the Whitney sum $\bigoplus^k\cT Q$ has the canonical forms taking values in $\mathbb{R}^k$ given by
    $$ \bm{\theta}_k = (\pi^\alpha)^\ast\theta\otimes e_\alpha\ ,\quad \bm \omega_k = -\d\bm\theta_k\,, 
    $$
    which, in natural coordinates $\{q^i,p^\alpha_i\}$ in $\bigoplus^k\cT Q$, read
    $$ 
    \bm \theta_k= p_i^\alpha\d q^i\otimes e_\alpha \ ,\quad \bm\omega_k = \d q^i\wedge\d p_i^\alpha\otimes e_\alpha \,.
    $$
    Taking all this into account, the triple $(\bigoplus^k\cT Q,\boldsymbol{\omega}_k,V_k)$, with $V_k = \ker\T\pi_Q$, is a $k$-symplectic manifold. Notice that the natural coordinates $\{q^i,p_i^\alpha\}$ in $\bigoplus^k\cT Q$ are the canonical example of {\it $k$-symplectic Darboux coordinates}. 
\end{example}

Given a $k$-symplectic manifold $(M,\boldsymbol{\omega},V)$, the vector bundle morphism
\begin{equation}\label{eq:k-symplectic-flat}
    \flat: (v_1,\dotsc,v_k)\in \bigoplus^k\T M \longmapsto \inn_{v_\alpha}\langle \bm \omega,e^\alpha\rangle \in\cT M\,,
\end{equation}
induces a morphism of $\Cinfty(M)$-modules $\flat\colon\X^k(M)\to\Omega^1(M)$. 
The morphism $\flat$ is surjective because the annihilator of its image belongs to $\bigcap_{\alpha=1}^k\ker\omega^\alpha=\ker\bm\omega=0$.

\subsection{\texorpdfstring{$k$}--Polycosymplectic and \texorpdfstring{$k$}--cosymplectic manifolds}

Non-autonomous field theories can be modelled by means of the so-called $k$-polycosymplectic \cite{MC11} and $k$-cosymplectic \cite{LM98} geometries. Let us introduce the basic definitions related to these geometric theories.

\begin{definition}
\label{Def::polyco}
         A {\it $k$-polycosymplectic structure} on $M$ is a pair $(\bm\tau ,\bm\omega )$, where $\bm\tau\in\Omega^1(M,\R^k)$ and $\bm\omega \in\Omega^2(M,\R^k)$ are closed differential one- and two-forms taking values in $\R^k$ such that
        \[
        \rk \ker \bm\omega=\rk\left(\bigcap_{\alpha = 1}^k\ker\omega^\alpha\right) =  k\,,\qquad\ker\bm\omega\cap \ker \bm\tau=\bigcap_{\alpha = 1}^k(\ker\tau^\alpha\cap\ker\omega^\alpha)= 0\,. 
        \]
        In this case, $(M,\bm{\tau},\bm{\omega})$ is called a {\it $k$-polycosymplectic manifold}. If, in addition, $\dim M = k + n(k+1)$ for a certain $n\in\mathbb{N}$, it is said that $(M,\bm{\tau},\bm{\omega})$ is a {\it polycosymplectic manifold} and $(\bm\tau ,\bm\omega )$ is a {\it polycosymplectic structure}. 
        \end{definition}

        \begin{definition}\label{Def::polycoes}
         A {\it $k$-cosymplectic structure} on $M$ is a family $(\bm{\tau},\bm{\omega},V)$, where $(\bm{\tau},\bm{\omega})$ is a polycosymplectic structure on $M$ and $V\subset\T M$ is a distribution of rank $nk$ on $M$ such that
        $$
        \restr{\bm{\tau}}{V} = 0\qquad\text{and}\qquad \restr{\bm{\omega}}{V\times V} = 0\,.
        $$
        Then, $(M,\bm{\tau},\bm{\omega},V)$ is {\it $k$-cosymplectic manifold}.
    
\end{definition}
If $\bm \omega$ is exact, namely $\bm\omega = \d\bm\theta$ for some $\bm\theta\in\Omega^1(M,\R^k)$,  the polycosymplectic (resp. $k$-polycosymplectic or $k$-cosymplectic)  structure is said to be {\it exact}. If not otherwise stated, we will assume that the indices $\alpha,\beta$ range from $1$ to $k$.
Throughout the paper, $M^{\bm \omega}_{\bm\tau}$ will be sometimes used to denote a $k$-polycosymplectic manifold $(M,\bm\tau,\bm\omega)$. This will allow for shortening the notation. Note that Definition \ref{Def::polyco} and Definition \ref{Def::polycoes} can be immediately modified to define a linear analogue, namely by assuming ${\bm \tau}\in E^*\otimes \mathbb{R}^k$ and ${\bm \omega}\in \Lambda^kE^*\otimes\mathbb{R}^k$ for a linear space $E$ and $\Lambda^kE^*$ being the space of $k$-covectors of $E$. 

For clarity, it is convenient to prove the following result. Moreover, this will be necessary to relate our $k$-polycosymplectic manifolds to a certain type of $k$-polysymplectic manifolds.
\begin{proposition}\label{prop:reeb-polyco}
    Let $(M,\bm\tau,\bm\omega)$ be a $k$-polycosymplectic manifold. There exists a unique family of vector fields $R_1,\ldots,R_k$ on $M$, called {\it Reeb vector fields}, such that
    \begin{equation}
        \label{Eq::ReebCon}
    \inn_{R_\alpha}\bm \tau = e_\alpha\,,\qquad \inn_{R_\alpha}\bm \omega = 0\,,\qquad \alpha=1,\ldots,k\,.
    \end{equation}
\end{proposition}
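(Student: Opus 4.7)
The plan is to show that $\ker\bm\omega$ and $\bm\tau$ together force a unique splitting at each point, and then argue smoothness by a vector bundle isomorphism.

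First I would observe that the conditions $\inn_{R_\alpha}\bm\tau = e_\alpha$ and $\inn_{R_\alpha}\bm\omega = 0$ are equivalent to
\[
\tau^\beta(R_\alpha) = \delta_\alpha^\beta, \qquad \inn_{R_\alpha}\omega^\beta = 0, \qquad \alpha,\beta = 1,\ldots,k,
\]
so the second condition says $R_\alpha(x) \in \ker\bm\omega_x$ for every $x\in M$, while the first prescribes the values of $\bm\tau_x$ on these vectors.

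Next I would exploit the hypotheses of Definition \ref{Def::polyco} pointwise. By assumption, $\ker\bm\omega$ has constant rank $k$, so it is a smooth vector subbundle of $\T M$. Regard $\bm\tau$ as a smooth vector bundle morphism $\bm\tau\colon \T M \to M\times\R^k$, and consider its restriction
\[
\bm\tau|_{\ker\bm\omega}\colon \ker\bm\omega \longrightarrow M\times\R^k.
\]
At every $x\in M$, the kernel of the linear map $\bm\tau_x|_{\ker\bm\omega_x}$ equals $\ker\bm\tau_x \cap \ker\bm\omega_x$, which vanishes by hypothesis. Since $\ker\bm\omega_x$ and $\R^k$ are both $k$-dimensional, the restriction is a linear isomorphism at every point, hence a smooth vector bundle isomorphism over $M$. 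This is the main structural step, and I expect it to carry all the content of the proof; the only subtlety is confirming that $\ker\bm\omega$ is genuinely a subbundle (constant rank), which follows directly from the hypothesis $\rk\ker\bm\omega = k$.

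Finally, I would define $R_\alpha\in\X(M)$ as the unique smooth section of $\ker\bm\omega$ mapping to the constant section $e_\alpha$ under the inverse isomorphism $(\bm\tau|_{\ker\bm\omega})^{-1}$. By construction, $R_\alpha(x)\in\ker\bm\omega_x$ gives $\inn_{R_\alpha}\bm\omega = 0$, and $\bm\tau(R_\alpha) = e_\alpha$ gives $\inn_{R_\alpha}\bm\tau = e_\alpha$. Uniqueness is automatic: any vector field satisfying \eqref{Eq::ReebCon} must take values in $\ker\bm\omega$ and must map to $e_\alpha$ under the pointwise isomorphism $\bm\tau|_{\ker\bm\omega}$, so it coincides with $R_\alpha$ at every point. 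Smoothness is inherited from the smoothness of the inverse vector bundle isomorphism.
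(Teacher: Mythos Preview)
Your proposal is correct and follows essentially the same approach as the paper: both arguments observe that $\bm\tau$ restricted to the rank-$k$ distribution $\ker\bm\omega$ is a pointwise linear isomorphism onto $\R^k$ (since its kernel is $\ker\bm\tau\cap\ker\bm\omega=0$), and then take $R_1,\ldots,R_k$ to be the unique dual basis / preimages of $e_1,\ldots,e_k$. Your version is slightly more explicit about the vector bundle framework and the smoothness argument, whereas the paper phrases the key step as ``$\tau^1\wedge\cdots\wedge\tau^k$ does not vanish on $D=\ker\bm\omega$'' and invokes the existence of a dual basis directly.
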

\begin{proof} By Definition \ref{Def::polyco}, one has $\ker \bm\tau\cap \ker \bm \omega=0$, which means that, if $\bm\tau=\tau^\alpha\otimes e_\alpha$, then $\tau^1\wedge \ldots\wedge \tau^k$ does not vanish on $D=\ker \bm \omega$. The distribution $D$ has rank $k$ by the definition of a $k$-polycosymplectic manifold. Therefore,  $\tau_x^1|_{D_x},\ldots,\tau_x^k|_{D_x}$  are linearly independent at every $x\in M$ and the restrictions of $\tau^1,\ldots,\tau^k$ to $D$ admit a unique dual basis $R_1,\ldots,R_k$ of vector fields on $M$ taking values in $D$. Then, the vector fields $R_1,\ldots,R_k$ satisfy the conditions \eqref{Eq::ReebCon}.
\end{proof}



\subsection{\texorpdfstring{$k$}--Vector fields and integral sections}

The notion of a $k$-vector field is of great use in the geometric study of partial differential equations \cite{MRSV15}. In particular, the dynamics of the so-called $k$-polycosymplectic Hamiltonian systems are determined by $k$-vector fields. Moreover, the reduction of a $k$-polycosymplectic Hamiltonian system is accomplished by reducing an associated  Hamiltonian $k$-vector field. Let us present some relevant details.

 Consider the Whitney sum of $k$ copies of the tangent bundle  to $M$, namely $\bigoplus^k\T M = \T M\oplus_M\overset{(k)}{\dotsb}\oplus_M \T M$, and the natural projections
\begin{equation*}
    {\rm pr}^\alpha\colon\bigoplus^k\T M\to\T M\ , \qquad {\rm pr}^1_M\colon\bigoplus^k\T M\to M,\qquad\alpha=1,\dots,k.
\end{equation*}

\begin{definition}
    A {\it $k$-vector field} on a manifold $M$ is a section
    $ \bfX\colon M\to\bigoplus^k\T M $
    of ${\rm pr}^1_M$. The space of  $k$-vector fields on $M$ is denoted by  $\X^k(M)$.
\end{definition}

\begin{figure}[ht]
    \centering
    \begin{tikzcd}[row sep=huge,
    column sep=huge]
        & \bigoplus^k\T M \arrow[d, "{\rm pr}^\alpha"]\\
        M \arrow[r, "X_\alpha"] \arrow[ur, "\bfX"] & \T M
    \end{tikzcd}
    \label{Diag::kvectorfield}
\end{figure}

Taking into account the commutative diagrams above, which are concerned with $\alpha=1,\ldots,k$, a $k$-vector field $\bfX\in\X^k(M)$ amounts to $k$ vector fields $X_1,\dotsc,X_k\in\X(M)$ such that $X_\alpha = {\rm pr}^\alpha\circ\bfX$ with $\alpha=1,\ldots,k$. With this in mind, it makes sense to denote $\bfX = (X_1,\dotsc, X_k)$. A $k$-vector field $\bfX = (X_1,\dotsc,X_k)$ induces a decomposable contravariant skew-symmetric tensor field, $X_1\wedge\dotsb\wedge X_k$, which is a section of the bundle $\bigwedge^k\T M\to M$. This also induces a (generalised) distribution on $M$ spanned by the vector fields $X_1,\ldots, X_k$ on $M$.

\begin{definition}\label{dfn:first-prolongation-k-tangent-bundle}
    Given a map $\phi\colon U\subset\R^k\to M$, its {\it first prolongation} to $\bigoplus^k\T M$ is the map $\phi'\colon U\subset\R^k\to\bigoplus^k\T M\,$
    defined by
    $$ \phi'(s) = \left( \phi(s); \T\phi\left( \parder{}{s^1}\bigg\vert_s \right),\dotsc,\T\phi\left( \parder{}{s^k}\bigg\vert_s \right) \right) = (\phi(s); \phi'_1(s),\ldots,\phi'_k(s))\,, $$
    where $s = (s^1,\dotsc,s^k)$ and $\{s^1,\ldots,s^k\}$ are the canonical coordinates of $\R^k$.
\end{definition}

Analogously to the case of integral curves of vector fields, one can define integral sections of a $k$-vector field as follows.

\begin{definition}
    Let $\bfX = (X_1,\dotsc,X_k)\in\X^k(M)$ be a $k$-vector field. An {\it integral section} of $\bfX$ is a map $\phi\colon U\subset\R^k\to M$ such that
    $$ \phi' = \bfX\circ\phi\,, $$
    that is, $\T\phi\circ\parder{}{s^\alpha} = X_\alpha\circ\phi$ for $\alpha=1,\ldots,k$. A $k$-vector field $\bfX\in\X^k(M)$ is {\it integrable} if every point of $M$ is in the image of an integral section of $\bfX$.
\end{definition}

Consider a $k$-vector field $\bfX = (X_1,\ldots, X_k)$ on $M$ with a local expression
\[
X_\alpha = X_\alpha^i\parder{}{x^i}\,,\qquad \alpha=1,\ldots,k\,,
\]
where $i$ ranges from $1$ to $\dim M$.
Then, $\phi\colon s\in U\subset\R^k\mapsto \phi(s)\in M$ is an integral section of $\bfX$ if, and only if, $\phi$ is a solution of the system of partial differential equations
$$ \parder{\phi^i}{s^\alpha} = X_\alpha^i(\phi)\,,\qquad i=1,\ldots,\dim M,\quad \alpha=1,\ldots,k\,.$$

Let $\bfX = (X_1,\dotsc,X_k)$ be a $k$-vector field on $M$. Then, $\bfX$ is integrable if, and only if, $[X_\alpha,X_\beta] = 0$ for $\alpha,\beta=1,\ldots,k$. These are precisely the necessary and sufficient conditions for the integrability of the above systems of partial differential equations in {\it normal form}, namely the partial derivatives of the coordinates $\phi$ at a point of $M$ can be written as  functions of the value of $\phi$ at that point \cite{Le13}.

\subsection{Non-autonomous field theory}

Given an $\R^k$-valued differential $\ell$-form $\bm\theta = \theta^\alpha\otimes e_\alpha\in\Omega^\ell(M,\R^k)$, the contraction of $\bm \theta$ with a vector field $X\in\X(M)$ is defined as
\[
\inn_X\bm\theta = (\inn_X\theta^\alpha)\otimes e_\alpha\in\Omega^{\ell-1}(M,\R^k)\,.
\]
For a $k$-vector field $\bfX = (X_1,\dotsc,X_k)\in\X^k(M)$, its contraction with $\bm\theta$ reads
\[
\inn_{\bfX}\bm\theta = \inn_{X_\alpha}\theta^\alpha\in\Omega^{\ell-1}(M)\,. 
\]
The exterior product of two $\mathbb{R}^k$-valued differential forms $\bm{\vartheta} = \vartheta^\alpha\otimes e_\alpha\in\Omega^{\ell_1}(M,\R^k)$ and $\bm{\mu} = \mu^\alpha\otimes e_\alpha\in\Omega^{\ell_2}(M,\R^k)$ is defined as follows
$$ \bm{\vartheta}\barwedge\bm{\mu} = \sum_{\alpha=1}^k(\vartheta^\alpha\wedge\mu^\alpha)\otimes e_\alpha\in\Omega^{\ell_1 + \ell_2}(M,\R^k)\,. $$

The above definitions will be very useful to simplify the notation of our further theory. Note that a point-wise analogue of the above definitions can be defined {\it mutatis mutandis}. In particular, the latter conveys implicitly some definitions for the contraction of elements of $E$ or $E\otimes \mathbb{R}^k$ with $E^*\otimes \mathbb{R}^k$ for a vector space $E$.

\begin{definition}
\label{Def::PolycoHamSystem}
    Let $(M,\bm\tau,\bm\omega)$ be a $k$-polycosymplectic manifold and let $h\in\Cinfty(M)$. Then, $(M,\bm\tau,\bm\omega,h)$ is called a {\it $k$-polycosymplectic Hamiltonian system}. A $k$-vector field on $M$, let us say $\bfX = (X_1,\dotsc,X_k)\in\X^k(M)$, is a {\it $k$-polycosymplectic Hamiltonian $k$-vector field} if it is a solution to the system of equations
    \begin{equation}\label{eq:polycosymplectic-equations-fields}
        \begin{dcases}
            \inn_{\bfX}\bomega = \d h - (R_\alpha h)\tau^\alpha\,,\\
            \inn_{X_\beta}\bm\tau = e_\beta\,,
        \end{dcases}\qquad \beta=1,\ldots,k.
    \end{equation}
    The function $h$ is the {\it Hamiltonian function} of $\bfX$.
    Hereafter, $\Xh^k(M,{\bm \tau},{\bm \omega})$ stands for the space of $k$-polycosymplectic Hamiltonian $k$-vector fields on $M$. 
\end{definition}

If the $k$-polycosymplectic manifold is understood from the context, we will simply write $\mathfrak{X}^k(M)$ for its space of Hamiltonian $k$-polycosymplectic vector fields on $M$. It is worth noting that every function $f\in\Cinfty(M)$ admits different $k$-polycosymplectic Hamiltonian $k$-vector fields. 
If $k = 1$, Definition \ref{Def::PolycoHamSystem} retrieves the notion of a cosymplectic Hamiltonian system.

Suppose that, around a point $x\in M$, we have Darboux coordinates $\{t^\alpha,q^i,p_i^\alpha\}$ and consider now a $k$-vector field $\bfX = (X_1,\dotsc,X_k)\in\X^k(M)$ given locally by
$$
X_\alpha = (X_\alpha)^\beta\parder{}{t^\beta} + (X_\alpha)^i\parder{}{q^i} + (X_\alpha)^\beta_i\parder{}{p_i^\beta}\,. 
$$
If $\bfX$ is a $k$-polycosymplectic Hamiltonian vector field,  conditions \eqref{eq:polycosymplectic-equations-fields} give
\begin{equation}\label{eq:polycosymplectic-equations-fields-coordinates}
    (X_\alpha)^\beta = \delta_\alpha^\beta\,,\qquad
    \parder{h}{p_i^\beta} = (X_\beta)^i\,,\qquad
    \parder{h}{q^i} = -\sum_{\alpha=1}^k(X_\alpha)^\alpha_i\,.
\end{equation}
This shows that every $h\in \Cinfty(M)$ may have different $k$-polycosymplectic Hamiltonian $k$-vector fields.
Let $\psi:\R^k\to M$ be an integral section with local expression
$$
\psi(s) = (t^\alpha(s),q^i(s), p_i^\alpha(s))\,,\quad s\in\R^k\,,
$$
 of a $k$-polycosymplectic Hamiltonian $k$-vector field $\bfX$.
In this case, $\psi$ satisfies the systems of partial differential equations
\begin{equation}\label{eq:polycosymplectic-hamiltonian-equations-coordinates}
        \parder{t^\beta}{s^\alpha} = \delta_\alpha^\beta\,,\qquad
        \parder{q^i}{s^\alpha} = \parder{h}{p_i^\alpha}\,,\qquad
\sum_{\alpha=1}^k\parder{p_i^\alpha}{s^\alpha} = -\parder{h}{q^i}        \,.
\end{equation}

\begin{example}[The vibrating membrane with external force]

Consider a horizontal vibrating membrane with coordinates $\{x,y\}$ subjected to a time-dependent external force given by a function $f(t,x,y)$. The phase space of this system is $M = \R^3\times\bigoplus^3\cT\R$ and it admits global Darboux coordinates $\{t,x,y, \zeta, p^t,p^x,p^y\}$, where $u$ stands for the distance of every point in the membrane with respect to its equilibrium position, and $p^t,p^x,p^y$ are the corresponding momenta. This system is described by the Hamiltonian function $h\in\Cinfty(M)$ given by
$$ h(t,x,y,\zeta,p^t,p^x,p^y) = \frac{1}{2}(p^t)^2 - \frac{1}{2c^2}(p^x)^2 - \frac{1}{2c^2}(p^y)^2 - \zeta f(t,x,y)\,, $$
where $c\in\R$ is a constant related to the physical features of the membrane, which takes into account its properties and tension. In this case, equations \eqref{eq:polycosymplectic-hamiltonian-equations-coordinates} for a section $$\psi:(t,x,y)\in \mathbb{R}^3\mapsto (t,x,y,\zeta(t,x,y),p^t(t,x,y),p^x(t,x,y),p^y(t,x,y))\in \R^3\times\textstyle\bigoplus_{\alpha=1}^3\cT \R$$ 
yield
\begin{gather}
    \parder{p^t}{t} + \parder{p^x}{x} + \parder{p^y}{y} = f(t,x,y)\,,\\
    \parder{\zeta}{t} = p^t\,,\quad \parder{\zeta}{x} = -\frac{1}{c^2}p^x\,,\quad \parder{\zeta}{y} = -\frac{1}{c^2}p^y\,.
\end{gather}
Note that the use of $\{t,x,y\}$ as the coordinates of the domain of an integral section of ${\bf X}_{\widetilde{h}}$ is a slight abuse of notation that is nevertheless common in the literature \cite{PhDThesisXRG} and it will be hereafter employed in this work.   
Combining the previous equations, we obtain the equation of a forced vibrating membrane, namely
$$ \parder{^2\zeta}{t^2} - \frac{1}{c^2}\left( \parder{^2\zeta}{x^2} + \parder{^2\zeta}{y^2} \right) = f(t,x,y)\,. $$
To describe this system in polar coordinates, we can consider the Hamiltonian function
$$ \widetilde h(t,r,\theta,\zeta,p^t,p^r,p^\theta) = \frac{1}{2r}\left( (p^t)^2 - \frac{1}{c^2}(p^r)^2 - \frac{r^2}{c^2}(p^\theta)^2 \right) - r \zeta f(t,r,\theta)\,, $$
and now equations \eqref{eq:polycosymplectic-hamiltonian-equations-coordinates} for a section $$\psi:(t,r,\theta)\in \mathbb{R}^3\mapsto (t,r,\theta,\zeta(t,r,\theta),p^t(t,r,\theta),p^x(t,r,\theta),p^y(t,r,\theta))\in \R^3\times\textstyle\bigoplus_{\alpha=1}^3\cT \R $$ become
\begin{gather}
    \parder{p^t}{t} + \parder{p^r}{r} + \parder{p^\theta}{\theta} = rf(t,r,\theta)\,,\\
    \parder{\zeta}{t} = \frac{1}{r}p^t\,,\quad \parder{\zeta}{r} = -\frac{1}{rc^2}p^r\,,\quad \parder{\zeta}{\theta} = -\frac{r}{c^2}p^\theta\,.
\end{gather}
Combining these equations, the equation of a forced vibrating membrane in polar coordinates reads
$$ \parder{^2\zeta}{t^2} - c^2\left( \parder{^2\zeta}{r^2} + \frac{1}{r}\parder{\zeta}{r} + \frac{1}{r^2}\parder{^2\zeta}{\theta^2} \right) = f(t,r,\theta)\,. $$
\end{example}

\section{\texorpdfstring{$k$}--Polysymplectic Marsden--Weinstein reduction without Ad\texorpdfstring{$^*$}--equivariance}
\label{Sec::PolysymplecticReduction}

This section introduces the notion of a momentum map for $k$-polysymplectic manifolds. In particular, we present $\Ad^{*k}$-equivariant $k$-polysymplectic momentum maps, which are well-known in Section \ref{Sec::ParticularPoly}. A theory of affine Lie group actions for $k$-polysymplectic manifolds, which seems to be new, will be devised in Section \ref{SubSec::Generalmomentummapspolysym}. We will show that ${\rm Ad}^{*k}$-equivariance is not required to achieve a $k$-polysymplectic Marsden--Weinstein reduction in Section \ref{SubSec::FinalRedPoly}.

\subsection{\texorpdfstring{$k$}--Polysymplectic momentum maps}\label{Sec::ParticularPoly}

The following definition introduces a fundamental class of Lie group actions preserving a $k$-polysymplectic form. As shown later, these actions play a crucial role in the Marsden--Weinstein $k$-polysymplectic reduction theory.

\begin{definition}
        An action $\Phi\colon G\times P\to P$ of a Lie group $G$ on a $k$-polysymplectic manifold $(P,\bm\omega)$ is said to be a \textit{$k$-polysymplectic Lie group action} if $\Phi_g^*\bm\om=\bm\om$ for each $g\in G$.
\end{definition}

\begin{definition}
\label{Def::PolysymMomentumMap}
A {\it $k$-polysymplectic momentum map} for a Lie group action $\Phi: G\!\x \!P\!\rightarrow\! P$ relative to a $k$-polysymplectic manifold $(P,\bm\om)$ is a map $\mathbf{J}^\Phi:P\rightarrow (\mathfrak{g}^*)^k$ such that
\begin{equation}
\label{Eq::PolycoMomentumMap}
\inn_{\xi_P}\bm\omega=(\iota_{\xi_P}\omega^\alpha)\otimes e_\alpha=\d\left<\mathbf{J}^\Phi,\xi\right\>,\qquad \forall \xi\in \Lg.
\end{equation}
\end{definition}
Note that equation \eqref{Eq::PolycoMomentumMap} implies that $\mathbf{J}^\Phi:P\rightarrow(\Lg^*)^k$ satisfies 
\begin{equation}
\label{Eq:EqCond}
\inn_{\boldsymbol{\xi}_P}\boldsymbol{\omega}=\d\left\<\mathbf{J}^\Phi,\boldsymbol{\xi}\right\>,\qquad\forall\boldsymbol{\xi}\in\Lg^k,
\end{equation}
where ${\bm \xi}_P$ is the $k$-vector field on $P$ whose $k$ vector field components are the fundamental vector fields of $\Phi$ related to the $k$-components of ${\bm \xi}\in \mathfrak{g}^k$. If we write ${\bm \xi}=(0,\ldots,  \overset{(\alpha)}{\xi} ,\ldots,0)\in \mathfrak{g}^k$ for any $\xi\in \mathfrak{g}$ and $\alpha=1,\ldots,k$ and impose \eqref{Eq:EqCond} to hold for a basis $\{\xi_1,\ldots,\xi_r\}$ for $\mathfrak{g}$,  we obtain $kr$ conditions, which uniquely fix the value of the $kr$ coordinates of ${\bf J}^\Phi$.
Conversely, the equation \eqref{Eq::PolycoMomentumMap} evaluated on the previous basis of $\mathfrak{g}$ imposes $r$ conditions for each one of the $k$ components of $\mathbf{J}^\Phi$, giving rise to $kr$ conditions.

The following definition is well-known and widely used in the literature \cite{MRSV15}, but we changed the notation of ${\rm Coad}^k$ to $\Ad^{*k}$ to shorten it. Nevertheless, we will see later that the $\Ad^{*k}$-equivariance condition is not necessary.

\begin{definition}
    A $k$-polysymplectic momentum map $\mathbf{J}^\Phi:P\rightarrow (\Lg^*)^k$ is {\it $\Ad^{*k}$-equivariant} if
    \[
    \mathbf{J}^\Phi\circ \Phi_g =\Ad^{*k}_{g^{-1}}\circ\,\, \mathbf{J}^\Phi\,,\quad \forall g\in G\,,
    \]
    where ${\rm Ad}^{*k}_{g^{-1}}={\rm Ad}^*_{g^{-1}}\stackrel{}\otimes \overset{(k)}{\dotsb}\otimes {\rm Ad}^*_{g^{-1}}$ and
    \[
    \begin{array}{rccc}
    \Ad^{*k}&:G\x(\Lg^*)^k & \longrightarrow & (\Lg^*)^k\\
    & (g,\bm \mu) &\longmapsto & \Ad^{*k}_{g^{-1}}\bm\mu\,.
    \end{array}
    \]
    In other words, for every $g\in G$, the following diagram commutes
\begin{center}
    \begin{tikzcd}
    P
    \arrow[r,"\mathbf{J}^\Phi"]
    \arrow[d,"\Phi_g"]& (\Lg^*)^k
    \arrow[d,"\Ad^{*k}_{g^{-1}}"]\\
    P
    \arrow[r,"\mathbf{J}^\Phi"]&
    (\Lg^*)^k.
    \end{tikzcd}
    \end{center}
\end{definition}

To simplify the notation, let us introduce the following definition. 

\begin{definition}
The four-tuple $(P,\bm\omega,h,{\bf J}^\Phi)$ is called a {\it $G$-invariant $k$-polysymplectic Hamiltonian system} if it consists of a $k$-polysymplectic manifold $(P,\bm \om)$, a $k$-polysymplectic Lie group action $\Phi:G\x P\rightarrow P$ such that $\Phi_g^*h=h$ for every $g\in G$, and a $k$-polysymplectic momentum map $\mathbf{J}^\Phi$ related to $\Phi$. An {\it $\Ad^{*k}$-equivariant $G$-invariant $k$-polysymplectic Hamiltonian system} is a $G$-invariant $k$-polysymplectic Hamiltonian system whose $k$-polysymplectic momentum map is $\Ad^{*k}$-equivariant.
\end{definition}

\subsection{General \texorpdfstring{$k$}--polysymplectic momentum maps}
\label{SubSec::Generalmomentummapspolysym}
This section develops the theory of $k$-polysymplectic momentum maps that are not necessarily $\Ad^{*k}$-equivariant. In particular, we demonstrate that every $k$-polysymplectic momentum map $\mathbf{J}^\Phi: P\rightarrow(\Lg^*)^k$ admits a Lie group action on $(\Lg^*)^k$ such that $\mathbf{J}^\Phi$ is equivariant with respect to such a Lie group action. The hereafter introduced techniques are more complex technically, but analogous, to the ones used in \cite{OR04, Za21}, where the $\Ad^{*k}$-equivariant momentum map methods on symplectic manifolds were extended by removing the ${\rm Ad}^{*k}$-equivariance condition. Recall that all manifolds are assumed to be connected unless otherwise stated.

\begin{proposition}
\label{Prop::PsiConstant}
Let $(P,\bm\omega,h,\mathbf{J}^\Phi)$ be a $G$-invariant $k$-polysymplectic Hamiltonian system and let us define the functions on $P$ given by
\[
    \bm\psi _{g,\boldsymbol{\xi}}:P\ni x\longmapsto  {\bf J}^\Phi_{\boldsymbol{\xi}}(\Phi_g(x))-{\bf J}^\Phi_{\Ad_{g^{-1}}^k\boldsymbol{\xi}}(x)\in\R\,,\quad\forall g\in G\,,\quad\forall \boldsymbol{\xi}\in\mathfrak{g}^k\,.
\]
Then, $\bm\psi_{g,\boldsymbol{\xi}}$ is constant on $P$ for every $g\in G$ and $\boldsymbol{\xi}\in\Lg^k$. Moreover, the function $\bm\sigma:G\ni g\mapsto {\bm \sigma}(g)\in\mathfrak (\mathfrak{g}^*)^k$ univocally determined by the condition $\<\bm\sigma(g),\boldsymbol{\xi}\>=\bm\psi_{g,\boldsymbol{\xi}}$ satisfies
\[    \bm\sigma(g_1g_2)=\bm\sigma(g_1)+\Ad^{*k}_{g_1^{-1}}\bm\sigma(g_2)\,,\quad\forall g_1,g_2\in G\,.
\]
\end{proposition}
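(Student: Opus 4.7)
The plan is to mirror the strategy of Proposition \ref{Prop::CosNonAdEquiv} with the appropriate $k$-polysymplectic bookkeeping. The statement splits naturally into two pieces: first, showing $\bm\psi_{g,\boldsymbol{\xi}}$ is constant on the connected manifold $P$; second, extracting the cocycle identity from an associativity argument.

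For the constancy, I would compute $d\bm\psi_{g,\boldsymbol{\xi}}$ and show it vanishes. Writing $\boldsymbol{\xi}=(\xi_1,\ldots,\xi_k)\in\mathfrak{g}^k$, the defining relation \eqref{Eq:EqCond} of the $k$-polysymplectic momentum map gives
\[
d\mathbf{J}^\Phi_{\boldsymbol{\xi}} \;=\; \inn_{\boldsymbol{\xi}_P}\bm\omega \;=\; \inn_{(\xi_\alpha)_P}\omega^\alpha,
\]
so
\[
d(\mathbf{J}^\Phi_{\boldsymbol{\xi}}\circ\Phi_g) \;=\; \Phi_g^*\inn_{(\xi_\alpha)_P}\omega^\alpha \;=\; \inn_{\Phi_{g^{-1}*}(\xi_\alpha)_P}\Phi_g^*\omega^\alpha \;=\; \inn_{\Phi_{g^{-1}*}(\xi_\alpha)_P}\omega^\alpha,
\]
where $\Phi_g^*\omega^\alpha=\omega^\alpha$ because $\Phi$ is $k$-polysymplectic. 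Invoking Lemma \ref{AdjointActionTh} in the form $\Phi_{g^{-1}*}\xi_P=(\Ad_{g^{-1}}\xi)_P$, this equals $\inn_{(\Ad_{g^{-1}}^k\boldsymbol{\xi})_P}\bm\omega = d\mathbf{J}^\Phi_{\Ad_{g^{-1}}^k\boldsymbol{\xi}}$. Hence $d\bm\psi_{g,\boldsymbol{\xi}}=0$ and, as $P$ is connected, $\bm\psi_{g,\boldsymbol{\xi}}$ is constant.

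Since the map $\boldsymbol{\xi}\mapsto\bm\psi_{g,\boldsymbol{\xi}}$ is manifestly $\R$-linear on $\mathfrak{g}^k$, this defines an element $\bm\sigma(g)\in(\mathfrak{g}^*)^k$ by $\langle\bm\sigma(g),\boldsymbol{\xi}\rangle=\bm\psi_{g,\boldsymbol{\xi}}$. For the cocycle identity, I would use $\Phi_{g_1 g_2}=\Phi_{g_1}\circ\Phi_{g_2}$ and the telescoping trick
\begin{align}
\langle\bm\sigma(g_1g_2),\boldsymbol{\xi}\rangle &= \mathbf{J}^\Phi_{\boldsymbol{\xi}}(\Phi_{g_1}(\Phi_{g_2}(x))) - \mathbf{J}^\Phi_{\Ad_{g_2^{-1}}^k\Ad_{g_1^{-1}}^k\boldsymbol{\xi}}(x) \\
&= \bigl[\mathbf{J}^\Phi_{\boldsymbol{\xi}}(\Phi_{g_1}(\Phi_{g_2}(x))) - \mathbf{J}^\Phi_{\Ad_{g_1^{-1}}^k\boldsymbol{\xi}}(\Phi_{g_2}(x))\bigr] \\
&\quad + \bigl[\mathbf{J}^\Phi_{\Ad_{g_1^{-1}}^k\boldsymbol{\xi}}(\Phi_{g_2}(x)) - \mathbf{J}^\Phi_{\Ad_{g_2^{-1}}^k(\Ad_{g_1^{-1}}^k\boldsymbol{\xi})}(x)\bigr],
\end{align}
for any $x\in P$. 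The first bracket is $\bm\psi_{g_1,\boldsymbol{\xi}}$ evaluated at $\Phi_{g_2}(x)$, i.e. $\langle\bm\sigma(g_1),\boldsymbol{\xi}\rangle$; the second is $\bm\psi_{g_2,\Ad_{g_1^{-1}}^k\boldsymbol{\xi}}$ evaluated at $x$, i.e. $\langle\bm\sigma(g_2),\Ad_{g_1^{-1}}^k\boldsymbol{\xi}\rangle=\langle\Ad^{*k}_{g_1^{-1}}\bm\sigma(g_2),\boldsymbol{\xi}\rangle$ by the definition of the coadjoint action. Since this holds for every $\boldsymbol{\xi}\in\mathfrak{g}^k$, the stated identity follows.

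The only mildly delicate step is the diffeomorphism-contraction identity $\Phi_g^*\inn_X\omega^\alpha=\inn_{\Phi_{g^{-1}*}X}\Phi_g^*\omega^\alpha$, which must be applied componentwise to the $\R^k$-valued form, but since the sum over $\alpha$ on each side is preserved, there is no real obstacle. All remaining work is bookkeeping of indices and faithfulness to the sign conventions in the paper's definition $\Ad^*_{g^{-1}}\vartheta=\vartheta\circ\Ad_{g^{-1}}$.
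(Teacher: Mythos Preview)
Your proof is correct and follows essentially the same approach as the paper: compute $\d\bm\psi_{g,\boldsymbol{\xi}}$ using the momentum map relation, the pullback--contraction identity, $k$-polysymplectic invariance of $\bm\omega$, and $(\Ad_{g^{-1}}\xi)_P=\Phi_{g^{-1}*}\xi_P$, then obtain the cocycle identity by a telescoping insertion. The only cosmetic difference is that the paper first rewrites $\bm\sigma(g)=\mathbf{J}^\Phi\circ\Phi_g-\Ad^{*k}_{g^{-1}}\mathbf{J}^\Phi$ and telescopes at the level of $(\mathfrak{g}^*)^k$-valued maps, whereas you stay paired against $\boldsymbol{\xi}$ throughout; these are dual presentations of the same argument.
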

\begin{proof}
    Note that
    \begin{align*}
        \d\bm\psi_{g,\boldsymbol{\xi}} &=   \d({\bf J}^\Phi_{\bm\xi}\circ\Phi_g)-\d {\bf J}^\Phi_{\Ad_{g^-1}^k\boldsymbol{\xi}} =  \Phi_g^*(\inn_{\boldsymbol{\xi}_P}\bomega) - \inn_{(\Ad_{g^{-1}}^k\boldsymbol{\xi})_P}\bm\omega  \\
        &=  \inn_{\Phi_{g^{-1}*}\boldsymbol{\xi}_P}\Phi_g^*\bm\omega-\inn_{\Phi_{g^{-1}*}\boldsymbol{\xi}_P}\bm\omega =  \inn_{\Phi_{g^{-1}*}\boldsymbol{\xi}_P}\bm\omega-\inn_{\Phi_{g^{-1}*}\boldsymbol{\xi}_P}\bm\omega =0\,,
    \end{align*}
    where we have used that $ \Phi$ is a $k$-polysymplectic Lie group action and the well-known fact that $(\Ad_g\xi)_P=\Phi_{g*}\xi_P$ for every $g\in G$ and each $\xi\in\Lg$ (see \cite{AM78}). Hence, $(\Ad^k_{g^{-1}}\boldsymbol{\xi})_P=\Phi_{g^{-1}*}\boldsymbol{\xi}_P$ for every $\boldsymbol{\xi}\in\Lg^k$. Therefore, $\bm\psi_{g,\boldsymbol{\xi}}$ is constant on $P$ for all $g\in G$ and any $\boldsymbol{\xi}\in\Lg^k$.
    
    Let us rewrite $\bm\psi_{g,\boldsymbol{\xi}}$ as follows
    \begin{multline*}
        \bm\psi _{g,\boldsymbol{\xi}}  =   {\bf J}^\Phi_{\boldsymbol{\xi}}\circ \Phi_g-{\bf J}^\Phi_{\Ad_{g^{-1}}^k\boldsymbol{\xi}}= \left\langle {\bf J}^\Phi\circ\Phi_g,\bm\xi\right\rangle- \left\langle {\bf J}^\Phi,\Ad_{g^{-1}}^k\bm\xi\right\rangle\\
        =  \left\langle {\bf J}^\Phi\circ \Phi_g,\bm\xi\right\rangle- \left\langle \Ad_{g^{-1}}^{*k}{\bf J}^\Phi,\bm\xi\right\rangle =\left\langle   {\bf J}^\Phi\circ\Phi_g-\Ad_{g^{-1}}^{*k}{\bf J}^\Phi ,\bm\xi\right\rangle\,,
    \end{multline*}
    where $\Ad^{k}_{g^{-1}}:\Lg^k\rightarrow \Lg^k$ is the transpose to $\Ad^{*k}_{g^{-1}}$.
    Hence,
    \[
        \bm\sigma:G\ni g\longmapsto  {\bf J}^\Phi\circ \Phi_g-\Ad_{g^{-1}}^{*k}{\bf J}^\Phi= \bm\sigma(g)\in(\mathfrak{g}^*)^k\,.
    \]
Thus, $\bm\sigma(g)$ is constant on $P$ for every $g\in G$ and $\<\bm\sigma(g),\bm\xi\> =\bm\psi_{g,\boldsymbol{\xi}}$, for every $g\in G$ and $\boldsymbol{\xi}\in \mathfrak{g}^k$. Then, 
    \begin{align*}
        \bm\sigma(g_1g_2) &=  \left({\bf J}^\Phi\circ\Phi_{g_1g_2}-\Ad_{{(g_1g_2)}^{-1}}^{*k}{\bf J}^\Phi\right)= \left({\bf J}^\Phi\circ\Phi_{g_1}\circ\Phi_{g_2}-\Ad_{{g_1}^{-1}}^{*k}\Ad_{{g_2}^{-1}}^{*k}{\bf J}^\Phi\right)\\
        &=  \left({\bf J}^\Phi\circ\Phi_{g_1}\circ\Phi_{g_2}-\Ad^{*k}_{{g_1}^{-1}}({\bf J}^\Phi\circ\Phi_{g_2})+\Ad^{*k}_{g_1^{-1}}({\bf J}^\Phi\circ\Phi_{g_2})-\Ad_{{g_1}^{-1}}^{*k}\Ad_{{g_2}^{-1}}^{*k}{\bf J}^\Phi\right)\\
        &=  \left({\bf J}^\Phi\circ\Phi_{g_1}\!-\!\Ad^{*k}_{{g_1}^{-1}}{\bf J}^\Phi\!+\!\Ad^{*k}_{{g_1}^{-1}}({\bf J}^\Phi\circ\Phi_{g_2}\!-\!\Ad_{{g_2}^{-1}}^{*k}{\bf J}^\Phi)\right) = \bm\sigma(g_1)\!+\!\Ad_{{g_1}^{-1}}^{*k}\bm\sigma(g_2)
    \end{align*}
    for any $g_1,g_2\in G$.
\end{proof}

The map $\bm\sigma:G\rightarrow (\mathfrak{g}^*)^k$ of the form
\[
\bm\sigma(g)=\mathbf{J}^\Phi\circ \Phi_g-\Ad^{*k}_{g^{-1}}\mathbf{J}^\Phi,\qquad g\in G,
\]
is called the {\it co-adjoint cocycle} associated with the $k$-polysymplectic momentum map $\mathbf{J}^\Phi$ on $P$. A map $\bm\sigma:G\rightarrow(\mathfrak{g}^*)^k$ is a {\it coboundary} if there exists $\bm\mu\in(\mathfrak{g}^*)^k$ such that
\[
\bm\sigma(g)=\bm\mu-\Ad_{g^{-1}}^{*k}\bm\mu,\qquad \forall g\in G.
\]
If ${\bf J}^\Phi$ is an $\Ad^{*k}$-equivariant $k$-polysymplectic momentum map, then $\bm\sigma=0$.

The next proposition shows that every $k$-polysymplectic Lie group action admitting a $k$-polysymplectic momentum map induces a well-defined cohomology class $[\bm\sigma]$, namely $[{\bm \sigma}]$ is the same for every $k$-polysymplectic momentum map induced by the initial $k$-polysymplectic Lie group action. The proof is analogous to the one of the equivalent proposition on symplectic manifolds \cite{OR04, Za21}.

\begin{proposition}\label{Prop::CoAdjCocycles}
Let $\Phi: G\times P \rightarrow P$ be a $k$-polysymplectic Lie group action. If $\mathbf{J}^{1\Phi}$ and $\mathbf{J}^{2\Phi}$ are two associated $k$-polysymplectic momentum maps with co-adjoint cocycles $\bm\sigma_1$ and $\bm\sigma_2$, respectively, then $[\bm\sigma_1]=[\bm\sigma_2]$.
\end{proposition}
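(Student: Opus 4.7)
The plan is to mimic the classical symplectic argument, which extends cleanly to the $k$-polysymplectic setting thanks to the fact that the defining equation \eqref{Eq::PolycoMomentumMap} for a momentum map is linear in the momentum map. First, I would observe that both $\mathbf{J}^{1\Phi}$ and $\mathbf{J}^{2\Phi}$ satisfy $\inn_{\xi_P}\bm\omega = \d\langle \mathbf{J}^{i\Phi},\xi\rangle$ for every $\xi\in\mathfrak{g}$, hence the difference satisfies $\d\langle \mathbf{J}^{1\Phi}-\mathbf{J}^{2\Phi},\xi\rangle = 0$ on $P$. Since $P$ is connected, this pairing is constant in the point of $P$, and as $\xi\mapsto \langle \mathbf{J}^{1\Phi}(x)-\mathbf{J}^{2\Phi}(x),\xi\rangle$ is linear in $\xi$, there exists a unique element $\bm\mu\in(\mathfrak{g}^*)^k$ such that
\begin{equation*}
\mathbf{J}^{1\Phi}(x) - \mathbf{J}^{2\Phi}(x) = \bm\mu\,,\qquad \forall x\in P\,.
\end{equation*}

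Next, I would substitute this identity into the definition of the co-adjoint cocycles introduced just after Proposition \ref{Prop::PsiConstant}. Using linearity of both $\Phi_g^*$ and $\Ad^{*k}_{g^{-1}}$, one obtains
\begin{equation*}
\bm\sigma_1(g) - \bm\sigma_2(g) = \bigl(\mathbf{J}^{1\Phi}-\mathbf{J}^{2\Phi}\bigr)\circ\Phi_g - \Ad^{*k}_{g^{-1}}\bigl(\mathbf{J}^{1\Phi}-\mathbf{J}^{2\Phi}\bigr) = \bm\mu - \Ad^{*k}_{g^{-1}}\bm\mu\,,
\end{equation*}
for every $g\in G$. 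The right-hand side is, by definition, the coboundary associated with $\bm\mu\in(\mathfrak{g}^*)^k$, so $\bm\sigma_1-\bm\sigma_2$ is a coboundary and therefore $[\bm\sigma_1]=[\bm\sigma_2]$.

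There is no essential obstacle: the only subtle point is justifying that the formal pointwise difference $\mathbf{J}^{1\Phi}-\mathbf{J}^{2\Phi}$ descends to a genuine constant element of $(\mathfrak{g}^*)^k$, which uses connectedness of $P$ (assumed at the start of Section \ref{Sec::CosGeo}) together with the linearity of the defining equation of a $k$-polysymplectic momentum map. The rest is a one-line computation using the definition of $\bm\sigma$ from Proposition \ref{Prop::PsiConstant} and the formula \eqref{CEq:on} characterising coboundaries.
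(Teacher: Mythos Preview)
Your proof is correct and follows precisely the approach the paper indicates: the paper omits the proof, noting only that it ``is analogous to the one of the equivalent proposition on symplectic manifolds,'' and your argument is exactly that classical computation transported to the $k$-polysymplectic setting. The only minor remark is that your reference to \eqref{CEq:on} points to the cosymplectic coboundary formula in Section~\ref{Sec::AGeneralMomentumMap}, whereas the relevant $k$-polysymplectic coboundary notion is the one introduced immediately after Proposition~\ref{Prop::PsiConstant}; the content is identical, so this is purely a matter of citing the right spot.
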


\begin{proposition}\label{Prop::GenEqJPolySym}
Let ${\bf J}^\Phi:P\rightarrow (\Lg^*)^k$ be a $k$-polysymplectic momentum map associated with a $k$-polysymplectic Lie group action $\Phi:G\times P\rightarrow P$ with co-adjoint cocycle $\bm\sigma$. Then,
\begin{enumerate}[{\rm(1)}]
    \item the map 
    \[
    \bm\Delta:G\times (\Lg^*)^k\ni(g,\bm \mu)\mapsto \bm\sigma(g)+\Ad^{*k}_{g^{-1}}\bm\mu=\bm\Delta_g(\bm\mu)\in(\Lg^*)^k,
    \]
    is a Lie group action of $G$ on $(\Lg^*)^k$,
    \item the $k$-polysymplectic momentum map ${\bf J}^\Phi$ is equivariant with respect to $\bm\Delta$, in other words, for every $g\in G$, one has a commutative diagram
\end{enumerate}

\begin{center}
    \begin{tikzcd}
    P
    \arrow[r,"\mathbf{J}^\Phi"]
    \arrow[d,"\Phi_g"]& (\Lg^*)^k
    \arrow[d,"\bm\Delta_g"]\\
    P
    \arrow[r,"\mathbf{J}^\Phi"]&
    (\Lg^*)^k.
    \end{tikzcd}
    \end{center}
\end{proposition}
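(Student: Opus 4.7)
My plan is to proceed by direct computation, exploiting the cocycle identity established in Proposition \ref{Prop::PsiConstant} and the definition of $\bm\sigma$.

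For part (1), I would first verify that $\bm\sigma(e)=0$. Setting $g_1=g_2=e$ in the cocycle identity $\bm\sigma(g_1g_2)=\bm\sigma(g_1)+\Ad^{*k}_{g_1^{-1}}\bm\sigma(g_2)$ yields $\bm\sigma(e)=2\bm\sigma(e)$, hence $\bm\sigma(e)=0$. Combined with $\Ad^{*k}_{e^{-1}}=\mathrm{Id}$, this gives $\bm\Delta_e(\bm\mu)=\bm\mu$. The composition axiom is then a one-line application of the cocycle identity together with the fact that $\Ad^{*k}\colon G\times(\mathfrak{g}^*)^k\to(\mathfrak{g}^*)^k$ is a left Lie group action (so $\Ad^{*k}_{g_1^{-1}}\circ\Ad^{*k}_{g_2^{-1}}=\Ad^{*k}_{(g_1g_2)^{-1}}$): namely,
\begin{align}
\bm\Delta_{g_1g_2}(\bm\mu)
&=\bm\sigma(g_1g_2)+\Ad^{*k}_{(g_1g_2)^{-1}}\bm\mu
 =\bm\sigma(g_1)+\Ad^{*k}_{g_1^{-1}}\bm\sigma(g_2)+\Ad^{*k}_{g_1^{-1}}\Ad^{*k}_{g_2^{-1}}\bm\mu\notag\\
&=\bm\sigma(g_1)+\Ad^{*k}_{g_1^{-1}}\bigl(\bm\sigma(g_2)+\Ad^{*k}_{g_2^{-1}}\bm\mu\bigr)
 =\bm\Delta_{g_1}(\bm\Delta_{g_2}(\bm\mu)).
\end{align}
Smoothness of $\bm\Delta$ follows from smoothness of $\bm\sigma$ (which is inherited from that of $\mathbf{J}^\Phi$ and $\Phi$ via the defining formula $\bm\sigma(g)=\mathbf{J}^\Phi\circ\Phi_g-\Ad^{*k}_{g^{-1}}\mathbf{J}^\Phi$, noting that this value is constant on $P$ by Proposition \ref{Prop::PsiConstant}, so it can be evaluated at any single point) and of the co-adjoint action.

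For part (2), equivariance is essentially the definition of $\bm\sigma$ read in reverse. Rearranging $\bm\sigma(g)=\mathbf{J}^\Phi\circ\Phi_g-\Ad^{*k}_{g^{-1}}\mathbf{J}^\Phi$ gives
\begin{equation}
\mathbf{J}^\Phi\circ\Phi_g=\bm\sigma(g)+\Ad^{*k}_{g^{-1}}\mathbf{J}^\Phi=\bm\Delta_g\circ\mathbf{J}^\Phi,\qquad\forall g\in G,
\end{equation}
which is precisely the commutativity of the diagram in the statement.

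I do not expect any real obstacle here; the only conceptual point worth emphasising is that both verifications are driven entirely by the cocycle identity of $\bm\sigma$ from Proposition \ref{Prop::PsiConstant}, so the proof is essentially a transcription of the $\Ad^{*k}$-analogue of the classical affine action argument from the symplectic setting (cf.\ \cite{OR04}), with no new difficulty arising from having $k$ copies of $\mathfrak{g}^*$ rather than one.
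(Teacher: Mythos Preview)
Your proposal is correct and follows essentially the same approach as the paper: both verify the identity and composition axioms for $\bm\Delta$ via $\bm\sigma(e)=0$ and the cocycle identity from Proposition~\ref{Prop::PsiConstant}, and both obtain equivariance by rearranging the defining relation $\bm\sigma(g)=\mathbf{J}^\Phi\circ\Phi_g-\Ad^{*k}_{g^{-1}}\mathbf{J}^\Phi$. You add the small refinements of deriving $\bm\sigma(e)=0$ explicitly from the cocycle identity and mentioning smoothness, but the substance of the argument is identical.
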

\begin{proof}
   First, since $\bm\sigma(e)=0$, one has
    \[
    \bm\Delta(e,\bm\mu)=\bm\mu+\bm\sigma(e)=\bm\mu\,,
    \]
    Thus, $\bm\Delta(e,\bm \mu)=\bm \mu$. Moreover, by Proposition \ref{Prop::PsiConstant}, one gets
    \begin{align*}
    \bm\Delta(g_1,\bm\Delta(g_2,\bm\mu))&=\Ad_{g_{1}^{-1}}^{*k}(\Ad_{g_{2}^{-1}}^{*k}\bm\mu +\bm\sigma(g_2))+\bm\sigma(g_1)=\Ad_{g_{1}^{-1}}^{*k}\Ad_{g_{2}^{-1}}^{*k}\bm\mu +\Ad_{g_{1}^{-1}}^{*k}\bm\sigma(g_2)+\bm\sigma(g_1)\\&=\Ad^{*k}_{(g_1g_2)^{-1}}\bm\mu +\Ad_{g_{1}^{-1}}^{*k}\bm\sigma(g_2)+\bm\sigma(g_1)=\Ad^{*k}_{(g_1g_2)^{-1}}\bm\mu +\bm\sigma(g_1g_2)=\bm\Delta(g_1g_2,\bm\mu)\,,
\end{align*}
for every $g_1,g_2\in G$ and $\bm \mu\in(\Lg^*)^k$. Hence, $\bm\Delta$ is a Lie group action of $G$ on $(\Lg^*)^k$.
Second, by definition of $\bm\Delta$ and $\bm\sigma$, one has
\[
    \bm\Delta_g\circ\mathbf{J}^\Phi=\Ad^{*k}_{g^{-1}}\mathbf{J}^\Phi+\bm\sigma(g)=\mathbf{J}^\Phi\circ \Phi_g\,,\qquad\forall g\in G\,,
\]
which shows that $\mathbf{J}^\Phi$ is $\bm\Delta$-equivariant.
\end{proof}

Proposition \ref{Prop::GenEqJPolySym} ensures that a general $k$-polysymplectic momentum map $\B J^\Phi$ gives rise to an equivariant $k$-polysymplectic momentum map relative to a new action $\bm\Delta: G\x(\mathfrak{g}^*)^k\rightarrow(\mathfrak{g}^*)^k$, called a {\it $k$-polysymplectic affine Lie group action}.

It should be noted that an affine Lie group action can also be expressed by writing $\bm\Delta(g,\bm\mu)=(\Delta^1_g\mu^1,\ldots,\Delta^k_g\mu^k)\in (\mathfrak{g}^*)^k$, where the mappings $\Delta^1,\ldots,\Delta^k$ take the form $\Delta^\alpha: G\x \mathfrak{g}^*\ni(g,\vartheta)\mapsto \Ad_{g^{-1}}^*\vartheta+\sigma^\alpha(g)=\Delta_g^\alpha(\vartheta)\in\mathfrak{g}^*$  and ${\bm \sigma}(g)=(\sigma^1(g),\ldots,\sigma^k(g))$, where  $\sigma^\alpha(g)=\mathbf{J}^\Phi_\alpha\circ\Phi_g-\Ad^*_{g^{-1}}\mathbf{J}^\Phi_\alpha$ for $\alpha=1,\ldots,k$.

\subsection{\texorpdfstring{$k$}--Polysymplectic Marsden--Weinstein reduction}\label{SubSec::FinalRedPoly}

This section aims to extend the $k$-polysymplectic reduction to the case when the associated $k$-polysymplectic momentum map $\mathbf{J}^\Phi$ is not necessarily $\Ad^{*k}$-equivariant. Recall that, at the end of  Section \ref{SubSec::Generalmomentummapspolysym}, we proved that every $k$-polysymplectic momentum map $\mathbf{J}^\Phi$ gives rise to a $k$-polysymplectic affine Lie group action $\bm\Delta: G\times (\mathfrak{g}^*)^k\rightarrow (\mathfrak{g}^*)^k$ making $\mathbf{J}^\Phi$ to be $\bm\Delta$-equivariant. Moreover, the isotropy group, $G^{\bm \Delta}_{\bm\mu}$, of $\bm\mu\in(\mathfrak{g}^*)^k$ relative to $\bm\Delta$ may be different from the isotropy group of $\boldsymbol{\mu}$ with respect to ${\rm Ad}^{*k}$ since the action of both on $(\mathfrak{g}^*)^k$ is not necessarily the same (cf. \cite[Theorem 6.1.1]{OR04}). This observation is crucial to generalise the $k$-polysymplectic Marsden--Weinstein reduction theorem \cite{MRSV15} to general not necessarily $\Ad^{*k}$-equivariant $k$-polysymplectic momentum maps. We here provide the generalisation to this new realm of the theorems presented in \cite{MRSV15}. Additionally, we introduce the quotients of $k$-polysymplectic manifolds related to general $k$-polysymplectic momentum maps, which will inherit, under mild conditions, a $k$-polysymplectic manifold (see \cite{MRSV15} for details). 

Let $(E,\bm\om)$ be a $k$-polysymplectic vector space and let $W$ be a linear subspace of $E$. Then, the {\it $k$-polysymplectic orthogonal complement} of $W$ relative to  $(E,\bm\om)$ is the linear subspace defined by
\[
    W^{\perp,k} = \{v\in E\mid \inn_w\inn_v\bm\omega=0\,, \forall w\in W\}\,.
\]

The following results are a slight generalisation of some known findings in $k$-polysymplectic geometry devised in \cite{MRSV15} so as to cover the case when $k$-polysymplectic momentum maps $\mathbf{J}^\Phi$ does not need to be ${\rm Ad}^*$-equivariant. In fact, the whole work \cite{MRSV15} can be rewritten by using $k$-polysymplectic affine Lie group actions instead of coadjoint actions. The changes to be performed are based merely on  substituting $G_{\mu^\alpha}$ by $G^{\Delta^\alpha}_{\mu^\alpha}$, $\mathfrak{g}_{\mu^\alpha}$ by $\mathfrak{g}^{\Delta^\alpha}_{\mu^\alpha}$, $G_{\bm \mu}$ by $G^{\bm \Delta}_{\bm \mu}$, as well as others minor amendments of the sort. Although changes to perform are simple, verifying where corrections have to be done represents a long work.

Let us start with our slight generalisation of a standard result in $k$-polysymplectic Marsden--Weinstein reduction theory.

\begin{lemma}
\label{Lemm::NonAdPerpPS}
Let $(P,\bm\om,h,\mathbf{J}^\Phi)$ be a $G$-invariant $k$-polysymplectic Hamiltonian system and let $\bm\mu\in(\Lg^*)^k$ be a weak regular value of a $k$-polysymplectic momentum map ${\bf J}^\Phi:P\rightarrow (\Lg^*)^k$. Then, for every $p\in {\bf J}^{\Phi-1}(\bm\mu)$, one has 
\begin{enumerate}[{\rm(1)}]
\item $\T_{p}(G^{\bm\Delta}_{\bm\mu} p) = \T_{p}(G p)\cap \T_{p}\left({\bf J}^{\Phi-1}({\bm\mu})\right)$, 
\item $\T_{p}({\bf J}^{\Phi-1}(\bm\mu)) = \T_{p}(Gp)^{\perp,k}$.
\end{enumerate}
\end{lemma}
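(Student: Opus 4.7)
The strategy parallels the classical symplectic argument and its cosymplectic analogue (Lemma \ref{lem:NonAdPerp}), but with two technical changes: every object takes values in $\R^k$, and one works inside the $k$-polysymplectic orthogonal complement rather than a symplectic one. The key input is the $\bm\Delta$-equivariance of $\mathbf{J}^\Phi$ supplied by Proposition \ref{Prop::GenEqJPolySym}, which replaces the usual $\Ad^{*k}$-equivariance and is precisely what compensates for the loss of coadjoint-equivariance in this generalized setting.

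For part (1), the plan is to identify both sides with the image of $\mathfrak{g}^{\bm\Delta}_{\bm\mu}:=\mathrm{Lie}(G^{\bm\Delta}_{\bm\mu})$ under the map $\xi\mapsto\xi_P(p)$. Differentiating $\mathbf{J}^\Phi\circ\Phi_{\exp(s\xi)}=\bm\Delta_{\exp(s\xi)}\circ\mathbf{J}^\Phi$ at $s=0$ and evaluating at $p\in\mathbf{J}^{\Phi-1}(\bm\mu)$ yields
\[
T_p\mathbf{J}^\Phi(\xi_P(p))=\xi_{(\mathfrak{g}^*)^k}(\bm\mu),
\]
where $\xi_{(\mathfrak{g}^*)^k}$ is the fundamental vector field on $(\mathfrak{g}^*)^k$ of the affine action $\bm\Delta$. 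Since $\bm\mu$ is weakly regular, $T_p(\mathbf{J}^{\Phi-1}(\bm\mu))=\ker T_p\mathbf{J}^\Phi$; hence $\xi_P(p)\in T_p(\mathbf{J}^{\Phi-1}(\bm\mu))$ iff $\xi_{(\mathfrak{g}^*)^k}(\bm\mu)=0$ iff $\xi\in\mathfrak{g}^{\bm\Delta}_{\bm\mu}$. Combining this with $T_p(Gp)=\{\xi_P(p):\xi\in\mathfrak{g}\}$ and $T_p(G^{\bm\Delta}_{\bm\mu}p)=\{\xi_P(p):\xi\in\mathfrak{g}^{\bm\Delta}_{\bm\mu}\}$ yields the stated equality in both directions.

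For part (2), the plan is to unwind the definition of $T_p(Gp)^{\perp,k}$ using the momentum-map identity $\inn_{\xi_P}\omega^\alpha=\d\langle J^\alpha,\xi\rangle$ for each component $\alpha=1,\dots,k$. A vector $v\in T_pP$ lies in $T_p(Gp)^{\perp,k}$ iff, for every $\xi\in\mathfrak{g}$ and every $\alpha$, one has $\inn_{\xi_P(p)}\inn_v\omega^\alpha=0$. Skew-symmetry of each $\omega^\alpha$ together with the momentum-map identity converts this into $\langle T_pJ^\alpha(v),\xi\rangle=0$ for all $\xi$ and all $\alpha$, i.e.\ $T_p\mathbf{J}^\Phi(v)=0\in(\mathfrak{g}^*)^k$. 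Weak regularity of $\bm\mu$ again identifies this condition with $v\in T_p(\mathbf{J}^{\Phi-1}(\bm\mu))$, completing the chain of equivalences.

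The hard part is purely bookkeeping: separating the $\bm\Delta$-isotropy $G^{\bm\Delta}_{\bm\mu}$ (the correct object governing reduction in the non-equivariant setting) from the $\Ad^{*k}$-isotropy used in \cite{MRSV15}, and handling the $\R^k$-valued pairings carefully so that the vanishing of $\langle\cdot,\xi\rangle$ in every component forces vanishing of the full element of $(\mathfrak{g}^*)^k$. Beyond Proposition \ref{Prop::GenEqJPolySym} and the defining equation of a $k$-polysymplectic momentum map, no new geometric input is required; the lemma reduces componentwise to the symplectic statement and is glued back together via weak regularity of $\bm\mu$.
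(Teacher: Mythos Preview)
Your proposal is correct and follows essentially the same approach as the paper. The paper's proof is considerably more terse---for part~(1) it states the chain of equivalences without explicitly invoking Proposition~\ref{Prop::GenEqJPolySym} or the differentiation argument, and for part~(2) it simply defers to \cite{MRSV15}---but the underlying logic (using $\bm\Delta$-equivariance to link $\xi_P(p)\in\ker T_p\mathbf{J}^\Phi$ with $\xi\in\mathfrak{g}^{\bm\Delta}_{\bm\mu}$, and unwinding the $k$-polysymplectic orthogonal via the momentum-map identity) is identical to what you have written out in full.
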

\begin{proof}
Let $\xi_P(p)\in \T_p(G p)$ for some $\xi\in\mathfrak{g}$. Then,  $\xi_P(p)\in \T_p(G^{\bm\Delta}_{\bm\mu}p)$ if, and only if, $\xi_P(p)\in \T_p\left(\mathbf{J}^{\Phi-1}(\bm\mu)\right)$, or equivalently $\xi\in\mathfrak{g}^{\bm\Delta}_{\bm\mu}$ if, and only if, $\xi_P(p)\in \T_p\left(\mathbf{J}^{\Phi-1}(\bm\mu)\right)$, where $\mathfrak{g}^{\bm\Delta}_{\bm\mu}$ is the Lie algebra of $G^{\bm\Delta}_{\bm\mu}$.

The proof of (2) follows essentially the same as in \cite{MRSV15}.
\end{proof}


Let us provide a non-necessarily ${\rm Ad}^{*k}$-equivariant counterpart of some relevant technical results in \cite{MRSV15}, namely Lemmas 3.15 and 3.16, leading to the $k$-polysymplectic Marsden--Weinstein reduction theory.
\begin{lemma} The linear map
$$
\widetilde{\pi}_p^\alpha:\frac{\T_p({\bf J}^{\Phi-1}({\bm \mu}))}{\T_p(G^{\bm \Delta}_{\bm \mu}p)}\longrightarrow\frac{\left(\frac{\T_p{\bf J}^\Phi_\alpha}{\ker \omega^\alpha_p}\right)}{\{\T_p\pi_{\bm\mu}(\xi_P)_p:\xi\in \mathfrak{g}_{\mu^\alpha}^{\Delta^\alpha}\}},\qquad p\in {\bf J}^{\Phi-1}({\bm \mu})\subset P,
$$
for some $\alpha$ belonging to $\{1,\ldots,k\}$ is an epimorphism if, and only if,
$$
\ker \T_p{\bf J}^{\Phi}_\alpha=\T_p({\bf J}^{\Phi-1}({\bm \mu}))+\ker\omega^\alpha_p+\T_p(G_{\mu^\alpha}^{\Delta^\alpha}p).
$$
Meanwhile $\bigcap_{\alpha=1}^k\ker\widetilde{\pi}_p^\alpha=0$ if, and only if,
$$
\T_p(G_{\bm\mu}^{\bm\Delta} p)=\bigcap^k_{\alpha=1}\left(\ker\om^\alpha_p+\T_p(G^{\Delta^\alpha}_{\mu^\alpha}p)\right)\cap \T_p(\mathbf{J}^{\Phi-1}(\bm\mu))\,.
$$
\end{lemma}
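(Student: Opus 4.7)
The strategy is to analyse $\widetilde{\pi}^\alpha_p$ as the canonical map on successive quotients induced by the inclusion $\T_p({\bf J}^{\Phi-1}(\bm\mu))\hookrightarrow\ker \T_p{\bf J}^\Phi_\alpha$, and then unpack both conditions via direct coset chases.

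First, I would verify that the three subspaces appearing on the right-hand side of (1) all sit inside $\ker \T_p{\bf J}^\Phi_\alpha$, so that $\widetilde{\pi}^\alpha_p$ is sensibly defined. The inclusion $\T_p({\bf J}^{\Phi-1}(\bm\mu))\subseteq\ker \T_p{\bf J}^\Phi_\alpha$ is immediate. For $\ker\omega^\alpha_p\subseteq\ker \T_p{\bf J}^\Phi_\alpha$ one invokes the momentum map identity $\inn_{\xi_P}\omega^\alpha=\d\<{\bf J}^\Phi_\alpha,\xi\>$ valid for every $\xi\in\Lg$: any $v\in\ker\omega^\alpha_p$ annihilates every $\d\<{\bf J}^\Phi_\alpha,\xi\>_p$, hence lies in $\ker \T_p{\bf J}^\Phi_\alpha$. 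Finally, $\T_p(G^{\Delta^\alpha}_{\mu^\alpha}p)\subseteq\ker \T_p{\bf J}^\Phi_\alpha$ follows from the $\bm\Delta$-equivariance of ${\bf J}^\Phi$ (Proposition \ref{Prop::GenEqJPolySym}), since the orbit $G^{\Delta^\alpha}_{\mu^\alpha}p$ is contained in ${\bf J}^{\Phi-1}_\alpha(\mu^\alpha)$. Moreover $\T_p(G^{\bm\Delta}_{\bm\mu}p)\subseteq \T_p(G^{\Delta^\alpha}_{\mu^\alpha}p)$ because $G^{\bm\Delta}_{\bm\mu}=\bigcap_\alpha G^{\Delta^\alpha}_{\mu^\alpha}$, which makes the displayed map well-defined.

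For part (1), surjectivity of $\widetilde{\pi}^\alpha_p$ means that every class in the codomain admits a representative coming from $\T_p({\bf J}^{\Phi-1}(\bm\mu))$. Equivalently, for every $w\in\ker \T_p{\bf J}^\Phi_\alpha$ one must find $v\in \T_p({\bf J}^{\Phi-1}(\bm\mu))$, $u\in\ker\omega^\alpha_p$ and $z\in \T_p(G^{\Delta^\alpha}_{\mu^\alpha}p)$ with $w=v+u+z$; this is exactly $\ker \T_p{\bf J}^\Phi_\alpha=\T_p({\bf J}^{\Phi-1}(\bm\mu))+\ker\omega^\alpha_p+\T_p(G^{\Delta^\alpha}_{\mu^\alpha}p)$.

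For part (2), a class $[v]\in \T_p({\bf J}^{\Phi-1}(\bm\mu))/\T_p(G^{\bm\Delta}_{\bm\mu}p)$ lies in $\ker\widetilde{\pi}^\alpha_p$ if and only if the image of $v$ in $\ker \T_p{\bf J}^\Phi_\alpha/\ker\omega^\alpha_p$ is absorbed by the image of $\T_p(G^{\Delta^\alpha}_{\mu^\alpha}p)$, equivalently $v\in\ker\omega^\alpha_p+\T_p(G^{\Delta^\alpha}_{\mu^\alpha}p)$. Intersecting over $\alpha$ and using that $[v]=0$ precisely when $v\in \T_p(G^{\bm\Delta}_{\bm\mu}p)$, the condition $\bigcap_\alpha\ker\widetilde{\pi}^\alpha_p=0$ is equivalent to the implication that any $v\in \T_p({\bf J}^{\Phi-1}(\bm\mu))$ lying in every $\ker\omega^\alpha_p+\T_p(G^{\Delta^\alpha}_{\mu^\alpha}p)$ already belongs to $\T_p(G^{\bm\Delta}_{\bm\mu}p)$. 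Since the reverse inclusion $\T_p(G^{\bm\Delta}_{\bm\mu}p)\subseteq\bigcap_\alpha(\ker\omega^\alpha_p+\T_p(G^{\Delta^\alpha}_{\mu^\alpha}p))\cap \T_p({\bf J}^{\Phi-1}(\bm\mu))$ is automatic from $G^{\bm\Delta}_{\bm\mu}\subseteq G^{\Delta^\alpha}_{\mu^\alpha}$, the claimed equality is precisely this condition. The only real difficulty is notational: keeping the nested quotients straight so that both unpackings reduce cleanly to the displayed algebraic identities.
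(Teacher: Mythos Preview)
Your argument is correct. The paper itself does not supply a proof of this lemma: it presents the statement as the non-$\Ad^{*k}$-equivariant analogue of Lemmas~3.15 and~3.16 in \cite{MRSV15} and remarks that the only changes needed are the formal substitutions $G_{\mu^\alpha}\rightsquigarrow G^{\Delta^\alpha}_{\mu^\alpha}$, $\mathfrak{g}_{\mu^\alpha}\rightsquigarrow\mathfrak{g}^{\Delta^\alpha}_{\mu^\alpha}$, $G_{\bm\mu}\rightsquigarrow G^{\bm\Delta}_{\bm\mu}$. Your direct coset chase is exactly the argument one finds upon unpacking those lemmas, so your approach and the (implicit) one in the paper coincide.

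One small point worth tightening: in your well-definedness check you verify the inclusions of the three subspaces into $\ker\T_p{\bf J}^\Phi_\alpha$, but you should also state explicitly that the induced map on quotients sends the class of $v\in\T_p({\bf J}^{\Phi-1}(\bm\mu))$ to the class of $v$ in the double quotient, so that the image of $\T_p(G^{\bm\Delta}_{\bm\mu}p)$ lands in the image of $\T_p(G^{\Delta^\alpha}_{\mu^\alpha}p)$ modulo $\ker\omega^\alpha_p$; you have the ingredient ($G^{\bm\Delta}_{\bm\mu}\subseteq G^{\Delta^\alpha}_{\mu^\alpha}$) but the sentence connecting it to well-definedness of $\widetilde{\pi}^\alpha_p$ could be one line more explicit. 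Otherwise nothing is missing.
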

The following theorems present and describe the main results of our slightly generalised $k$-polysymplectic Marsden--Weinstein reduction theory. It is extremely important to stress that there is no restriction on the dimension of $P$ for the following Theorem \ref{Th::PolisymplecticReductionJ} and Theorem \ref{Th::PolyReductionDynamics} to hold.

\begin{theorem}[The $k$-polysymplectic Marsden--Weinstein reduction theorem]\label{Th::PolisymplecticReductionJ}
    Consider a $G$-invariant $k$-polysymplectic Hamiltonian system  $(P,\bm\om,h,\mathbf{J}^\Phi)$. Assume that $\bm\mu\in (\Lg^*)^k$ is a weak regular value of $\mathbf{J}^\Phi$ and $G_{\bm\mu}^{\bm\Delta}$ acts in a quotientable manner on $\mathbf{J}^{\Phi-1}(\bm\mu)$. Let $G^{
    \Delta^\alpha}_{\mu^\alpha}$ denote the isotropy group at $\mu^\alpha$ of the Lie group action $\Delta^\alpha:(g,\vartheta)\in G\x\mathfrak{g}^*\mapsto  \Delta^\alpha(g,\vartheta)\in \mathfrak{g}^*$ for $\alpha=1,\ldots,k$. Moreover, let the following conditions hold
    \begin{equation}\label{Eq::PolysymplecticReduction1eq}
        \ker (\T_p\mathbf{J}_\alpha^\Phi) = \T_p(\mathbf{J}^{\Phi-1}(\bm\mu))+\ker\om^\alpha_p + \T_p(G^{\Delta^\alpha}_{\mu^\alpha} p)\,,
    \end{equation}
    \begin{equation}\label{Eq::PolysymplecticReduction2eq}
        \T_p(G_{\bm\mu}^{\bm\Delta} p)=\bigcap^k_{\alpha=1}\left(\ker\om^\alpha_p+\T_p(G^{\Delta^\alpha}_{\mu^\alpha}p)\right)\cap \T_p(\mathbf{J}^{\Phi-1}(\bm\mu))\,,
    \end{equation}
    for every $p\in {\bf J}^{\Phi-1}({\bm \mu})$ and all $\alpha=1,\ldots,k$. Then,  $(\mathbf{J}^{\Phi-1}(\bm\mu)/G^{\bm\Delta}_{\bm\mu},\bm\om_{\bm\mu})$ is a $k$-polysymplectic manifold, where ${\bm \omega}_{\bm \mu}$ is  univocally determined by
    \[
        \pi_{\bm\mu}^*\bm\om_{\bm\mu}=\jmath_{\bm\mu}^*\bm\om\,,
    \]
    while $\jmath_{\bm\mu}:\B J^{\Phi-1}(\bm\mu)\hookrightarrow P$ is the natural immersion and $\pi_{\bm\mu}:\mathbf{J}^{\Phi-1}(\bm\mu)\rightarrow \mathbf{J}^{\Phi-1}(\bm\mu)/G^{\bm\Delta}_{\bm\mu}$ is the canonical projection.
\end{theorem}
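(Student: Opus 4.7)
My approach closely parallels the classical Marsden--Weinstein reduction and the version in \cite{MRSV15}, with the key modification that the coadjoint action is replaced everywhere by the affine $k$-polysymplectic action $\bm\Delta$ constructed in Proposition \ref{Prop::GenEqJPolySym}. The plan is to first verify that the quotient is a well-defined manifold, then construct $\bm\omega_{\bm\mu}$ by showing that $\jmath_{\bm\mu}^*\bm\omega$ descends through $\pi_{\bm\mu}$, then establish closedness, and finally use the two technical hypotheses \eqref{Eq::PolysymplecticReduction1eq}--\eqref{Eq::PolysymplecticReduction2eq} to establish the non-degeneracy $\bigcap_{\alpha}\ker\omega^\alpha_{\bm\mu}=0$.

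First, I would note that by Proposition \ref{Prop::GenEqJPolySym}, $\mathbf{J}^\Phi$ is $\bm\Delta$-equivariant, so the isotropy subgroup $G^{\bm\Delta}_{\bm\mu}$ preserves $\mathbf{J}^{\Phi-1}(\bm\mu)$ through $\Phi$. Since $\bm\mu$ is a weak regular value, $\mathbf{J}^{\Phi-1}(\bm\mu)$ is a submanifold with $\T_p(\mathbf{J}^{\Phi-1}(\bm\mu))=\ker \T_p\mathbf{J}^\Phi$, and the quotientability hypothesis then makes $M^{\bm\Delta}_{\bm\mu}$ a manifold with $\pi_{\bm\mu}$ a surjective submersion.

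Next, to construct $\bm\omega_{\bm\mu}$ satisfying $\pi_{\bm\mu}^*\bm\omega_{\bm\mu}=\jmath_{\bm\mu}^*\bm\omega$, I would verify that $\jmath_{\bm\mu}^*\bm\omega$ is both $G^{\bm\Delta}_{\bm\mu}$-invariant and horizontal with respect to the $G^{\bm\Delta}_{\bm\mu}$-orbit foliation. Invariance is immediate from $\Phi_g^*\bm\omega=\bm\omega$. For horizontality, I use Lemma \ref{Lemm::NonAdPerpPS}: for $\xi\in\mathfrak{g}^{\bm\Delta}_{\bm\mu}$ the fundamental vector $\xi_P(p)$ lies in $\T_p(G^{\bm\Delta}_{\bm\mu}p)\subset \T_p(Gp)$, while any $v\in \T_p\mathbf{J}^{\Phi-1}(\bm\mu)$ lies in $\T_p(Gp)^{\perp,k}$ by part (2), so $\iota_{\xi_P}\iota_v\bm\omega=0$ at $p$. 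Standard submersion arguments then produce a unique $\bm\omega_{\bm\mu}\in\Omega^2(M^{\bm\Delta}_{\bm\mu},\R^k)$ satisfying the defining relation. Closedness follows because $\pi_{\bm\mu}^*\d\bm\omega_{\bm\mu}=\d\jmath_{\bm\mu}^*\bm\omega=\jmath_{\bm\mu}^*\d\bm\omega=0$ and $\pi_{\bm\mu}^*$ is injective on forms.

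The main technical obstacle is the non-degeneracy $\bigcap_{\alpha=1}^k\ker\omega^\alpha_{\bm\mu}=0$, and this is exactly where hypotheses \eqref{Eq::PolysymplecticReduction1eq} and \eqref{Eq::PolysymplecticReduction2eq} enter. Given $[v]\in T_{[p]}M^{\bm\Delta}_{\bm\mu}$ lying in every $\ker\omega^\alpha_{\bm\mu}$, I lift to $v\in \T_p\mathbf{J}^{\Phi-1}(\bm\mu)$, so that $\omega^\alpha(v,u)=0$ for every $\alpha$ and every $u\in \T_p\mathbf{J}^{\Phi-1}(\bm\mu)$. This places $v$ in $\T_p\mathbf{J}^{\Phi-1}(\bm\mu)\cap \bigcap_\alpha (\T_p\mathbf{J}^{\Phi-1}(\bm\mu))^{\perp_{\omega^\alpha}}$. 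By the lemma preceding the theorem, condition \eqref{Eq::PolysymplecticReduction1eq} ensures each $\widetilde\pi^\alpha_p$ is surjective, which dualises to identify the $\omega^\alpha$-orthogonal of $\T_p\mathbf{J}^{\Phi-1}(\bm\mu)$ inside $\T_pP$ with $\ker\omega^\alpha_p+\T_p(G^{\Delta^\alpha}_{\mu^\alpha}p)$; intersecting over $\alpha$ and with $\T_p\mathbf{J}^{\Phi-1}(\bm\mu)$, condition \eqref{Eq::PolysymplecticReduction2eq} then collapses this to $\T_p(G^{\bm\Delta}_{\bm\mu}p)$, giving $v\in \T_p(G^{\bm\Delta}_{\bm\mu}p)$ and hence $[v]=0$. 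Uniqueness of $\bm\omega_{\bm\mu}$ is a direct consequence of the surjectivity of $\T\pi_{\bm\mu}$, completing the proof.
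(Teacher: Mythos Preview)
Your outline matches the approach the paper adopts, which is that of \cite{MRSV15} with the coadjoint action replaced throughout by the affine action $\bm\Delta$; the paper itself does not reprove the theorem but states it and the preceding lemma as direct generalisations of \cite{MRSV15}. The construction of the quotient, the descent of $\jmath_{\bm\mu}^*\bm\omega$ via invariance and horizontality (using Lemma \ref{Lemm::NonAdPerpPS}), and closedness are all handled correctly.

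There is, however, a genuine imprecision in your nondegeneracy argument. The claim that condition \eqref{Eq::PolysymplecticReduction1eq} ``dualises to identify the $\omega^\alpha$-orthogonal of $\T_p\mathbf{J}^{\Phi-1}(\bm\mu)$ inside $\T_pP$ with $\ker\omega^\alpha_p+\T_p(G^{\Delta^\alpha}_{\mu^\alpha}p)$'' is not correct: that orthogonal always contains the strictly larger subspace $\T_p(Gp)+\ker\omega^\alpha_p$, since $\T_p\mathbf{J}^{\Phi-1}(\bm\mu)\subset\ker\T_p\mathbf{J}^\Phi_\alpha=(\T_p(Gp))^{\perp_{\omega^\alpha}}$. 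The actual role of \eqref{Eq::PolysymplecticReduction1eq} is the following. If $v\in\T_p\mathbf{J}^{\Phi-1}(\bm\mu)$ satisfies $\omega^\alpha(v,u)=0$ for all $u\in\T_p\mathbf{J}^{\Phi-1}(\bm\mu)$, then since $v$ already pairs to zero under $\omega^\alpha$ with $\ker\omega^\alpha_p$ (trivially) and with $\T_p(G^{\Delta^\alpha}_{\mu^\alpha}p)$ (because $\iota_{\xi_P}\omega^\alpha=\d J^\alpha_\xi$ and $v\in\ker\T_p\mathbf{J}^\Phi_\alpha$), condition \eqref{Eq::PolysymplecticReduction1eq} upgrades this to $\omega^\alpha(v,u)=0$ for all $u\in\ker\T_p\mathbf{J}^\Phi_\alpha$. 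Hence $v\in(\ker\T_p\mathbf{J}^\Phi_\alpha)^{\perp_{\omega^\alpha}}=\T_p(Gp)+\ker\omega^\alpha_p$, and intersecting with $\T_p\mathbf{J}^{\Phi-1}(\bm\mu)$ one checks (using $\bm\Delta$-equivariance of $\mathbf{J}^\Phi_\alpha$) that this equals $\big(\T_p(G^{\Delta^\alpha}_{\mu^\alpha}p)+\ker\omega^\alpha_p\big)\cap\T_p\mathbf{J}^{\Phi-1}(\bm\mu)$. Now condition \eqref{Eq::PolysymplecticReduction2eq} finishes the argument exactly as you wrote. Equivalently, in the language of the lemma preceding the theorem, $\omega^\alpha_{\bm\mu}$ is the pullback by $\widetilde\pi^\alpha_p$ of the nondegenerate linear symplectic reduced form on the target, so $\ker\omega^\alpha_{\bm\mu}=\ker\widetilde\pi^\alpha_p$, and condition \eqref{Eq::PolysymplecticReduction2eq} is precisely $\bigcap_\alpha\ker\widetilde\pi^\alpha_p=0$.
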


Note that the above theorem introduces a few improvements relative to the $k$-polysymplectic Marsden--Weinstein reduction theorem in \cite{MRSV15}.  For instance, the ${\rm Ad}^*$-equivariance of ${\bf J}^\Phi$ is removed, which is achieved by introducing a $k$-polysymplectic affine Lie group action ${\bm \Delta}$.

The  theorem below shows when a class of $k$-polysymplectic Hamiltonian system $(P,\bm\omega,h,\mathbf{J}^{\Phi})$ induces a $k$-polysymplectic Hamiltonian system on the reduced manifold $\mathbf{J}^{\Phi-1}(\bm\mu)/G^{\bm\Delta}_{\bm\mu}$ obtained via Theorem \ref{Th::PolisymplecticReductionJ}.

\begin{theorem}
\label{Th::PolyReductionDynamics}
Let the assumptions of Theorem \ref{Th::PolisymplecticReductionJ} be satisfied. Let $h\in \Cinfty(P)$ be a $G$-invariant Hamiltonian function relative to $\Phi$ and let $ {\bfX}^h=(X^h_1,\ldots, X^h_k)$ be a $k$-vector field associated with $h$. Assume that $\Phi_{g*}\mathbf{X}^h=\mathbf{X}^h$ for every $g\in G$ and ${\bf X}^h$ is tangent to $\mathbf{J}^{\Phi-1}(\bm\mu)$. Then, the flows $F^\alpha_t$ of $X^h_\alpha$ leave $\mathbf{J}^{\Phi-1}(\bm\mu)$ invariant and they induce a unique flow $\mathcal{F}^\alpha_t$ on $\mathbf{J}^{\Phi-1}(\bm\mu)/G_{\bm\mu}^{\bm\Delta}$ satisfying $\pi_{\bm\mu}\circ F^\alpha_t=\mathcal{F}^\alpha_t\circ \pi_{\bm\mu}$ for every $\alpha=1,\ldots,k$.
\end{theorem}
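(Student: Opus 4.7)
The plan is to establish the two assertions in turn. First I would use the tangency of $\mathbf{X}^h$ to the level set $\mathbf{J}^{\Phi-1}(\bm\mu)$ to show that each flow $F_t^\alpha$ preserves it; this is standard ODE theory on manifolds, namely that integral curves of a vector field tangent to an embedded submanifold remain in the submanifold. The weak regularity of $\bm\mu$ ensures that $\mathbf{J}^{\Phi-1}(\bm\mu)$ is an embedded submanifold, and the hypothesis that $\mathbf{X}^h$ is tangent to it is already in the statement, so each $F_t^\alpha$ restricts to a (local) diffeomorphism of $\mathbf{J}^{\Phi-1}(\bm\mu)$.

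Next I would promote the $G$-invariance of the vector field to the corresponding equivariance of its flow, using the standard fact that $\Phi_g\circ F_t^\alpha=F_t^\alpha\circ\Phi_g$ for all $g\in G$ whenever $\Phi_{g*}X_\alpha^h=X_\alpha^h$. Since $G^{\bm\Delta}_{\bm\mu}$ preserves $\mathbf{J}^{\Phi-1}(\bm\mu)$ under $\Phi$, which is a direct consequence of the $\bm\Delta$-equivariance of $\mathbf{J}^\Phi$ from Proposition \ref{Prop::GenEqJPolySym}, restricting this equivariance to $g\in G^{\bm\Delta}_{\bm\mu}$ and to points of $\mathbf{J}^{\Phi-1}(\bm\mu)$ shows that $F_t^\alpha$ sends $G^{\bm\Delta}_{\bm\mu}$-orbits to $G^{\bm\Delta}_{\bm\mu}$-orbits.

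With these two ingredients in place, I would descend $F_t^\alpha$ to the quotient $\mathbf{J}^{\Phi-1}(\bm\mu)/G^{\bm\Delta}_{\bm\mu}$ using the universal property of the canonical projection $\pi_{\bm\mu}$. The quotientable hypothesis makes $\pi_{\bm\mu}$ a surjective submersion, so a smooth map on $\mathbf{J}^{\Phi-1}(\bm\mu)$ that is constant on the $G^{\bm\Delta}_{\bm\mu}$-orbits factors uniquely through a smooth map on the quotient. Applying this to $\pi_{\bm\mu}\circ F_t^\alpha$ produces the sought $\mathcal{F}_t^\alpha$ satisfying $\pi_{\bm\mu}\circ F_t^\alpha=\mathcal{F}_t^\alpha\circ\pi_{\bm\mu}$; uniqueness is immediate from the surjectivity of $\pi_{\bm\mu}$, and the one-parameter (local) group property $\mathcal{F}_s^\alpha\circ\mathcal{F}_t^\alpha=\mathcal{F}_{s+t}^\alpha$ follows by projecting the corresponding identity for $F_t^\alpha$.

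The argument has no serious obstacle, as all the delicate geometric work is already packed into the hypotheses (tangency of $\mathbf{X}^h$ to the level set, $\Phi$-invariance of each component, and the quotientability of the action). The main subtlety worth emphasising is that equivariance must be verified for the subgroup $G^{\bm\Delta}_{\bm\mu}$ defined by the affine Lie group action $\bm\Delta$ rather than for the coadjoint isotropy; this is exactly what the generalised framework of Section \ref{SubSec::Generalmomentummapspolysym} permits, since $G^{\bm\Delta}_{\bm\mu}\subset G$ and the restriction of $\Phi$ to $G^{\bm\Delta}_{\bm\mu}$ does preserve $\mathbf{J}^{\Phi-1}(\bm\mu)$.
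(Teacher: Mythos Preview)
Your proposal is correct and follows essentially the same approach as the paper, which does not give a detailed proof but simply remarks that it proceeds as in \cite{MRSV15} with the isotropy group $G^{\bm\Delta}_{\bm\mu}$ of the affine action replacing the coadjoint one. Your flow-descent argument---tangency gives invariance of the level set, $\Phi$-invariance of each $X_\alpha^h$ gives equivariance of the flow, and the quotientable hypothesis allows passage to $\mathbf{J}^{\Phi-1}(\bm\mu)/G^{\bm\Delta}_{\bm\mu}$ via the universal property of $\pi_{\bm\mu}$---is exactly that standard argument, and you correctly flag that the only novelty is working with $G^{\bm\Delta}_{\bm\mu}$ rather than the coadjoint isotropy.
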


The proof of Theorem \ref{Th::PolyReductionDynamics}, based on Theorem \ref{Th::PolisymplecticReductionJ}, is essentially the same as in \cite{MRSV15}. It is worth noting that ${\bfX}^h$ does not need to be invariant relative to the action of $G$ even when its HDW equations are so. This will be illustrated in the examples of Section \ref{Se::Examples}.

\section{A \texorpdfstring{$k$}--polycosymplectic Marsden--Weinstein reduction}
\label{Sec::PolycoReduction}
This section contains some of the main results of the paper: the development of a new $k$-polycosymplectic Marsden--Weinstein reduction that does not depend on the ${\rm Ad}^{*k}$-equivariance of the employed $k$-polycosymplectic momentum map. As a byproduct,  $k$-polycosymplectic geometry is considered as a particular case of $k$-polysymplectic geometry. Despite the relevance of the latter result, it is worth noting that this is done by showing that a $k$-polycosymplectic structure gives rise to a $k$-polysymplectic structure on a manifold of larger dimension, which may introduce additional complications to study problems in the initial $k$-polycosymplectic manifold and may need to be addressed (see \cite{LMZ22} for a more detailed comment of this fact in the cosymplectic case). In a few words, while $k$-polycosymplectic structures may be equivalent to some $k$-polysymplectic structures on manifolds of higher-dimension, the corresponding Hamiltonian $k$-polycosymplectic systems are not ``geometrically equivalent" to the associated Hamiltonian $k$-polysymplectic ones on manifolds of a higher-dimension.

\subsection{\texorpdfstring{$k$}--Polycosymplectic momentum maps}\label{Sec::with}

Let us develop a $k$-polycosymplectic momentum map notion by extending  the ideas employed to define cosymplectic momentum maps to the $k$-polycosymplectic realm. The definition of a $k$-polycosymplectic $\Ad^{*k}$-equivariant momentum map is also provided. 

Every $k$-polycosymplectic structure on a manifold $M$ gives rise to two closed differential forms $\bm\omega\in\Omega^2(M,\R^k)$ and $\bm\tau\in\Omega^1(M,\R^k)$ given by
\[
\bm\omega=\omega^\alpha\otimes e_\alpha,\qquad \bm\tau=\tau^\alpha\otimes e_\alpha,
\]
for a canonical basis $\{e_1,\ldots,e_k\}$ in $\mathbb{R}^k$ and some differential two- and one-forms on $M$ given by $\omega^\alpha$ and $\tau^\alpha$ for $\alpha=1,\ldots,k$, respectively. The standard differential calculus for differential forms can be naturally extended to differential forms taking values in $\mathbb{R}^k$ \cite{KN96I}. 

\begin{definition}
        A Lie group action $\Phi\colon G\times M\to M$   is said to be a $k$-\textit{polycosymplectic Lie group action} relative to the $k$-polycosymplectic manifold  $(M,\bm\tau,\bm\omega)$ if, for each $g\in G$, the diffeomorphism $\Phi_g:M\rightarrow M$ satisfies $\Phi_g^*\bm\om=\bm\om$ and $\Phi_g^*\bm\tau=\bm\tau$.
\end{definition}

\begin{definition}
\label{Def::kpolycosymMomentumMap}
A {\it $k$-polycosymplectic momentum map} for a Lie group action $\Phi:G\!\x \!M\!\rightarrow\! M$ relative to a $k$-polycosymplectic manifold $(M,\bm\tau,\bm\om)$ such that $\xi_M$ takes values in $\ker{\bm \tau}$ for every $\xi\in \mathfrak{g}$, is a map $\mathbf{J}^\Phi:M\rightarrow (\mathfrak{g}^*)^k$ satisfying  that
\begin{equation}
\label{Eq::polycosymMomentuMap}
\inn_{\xi_M}\bm\omega=\d\left<\mathbf{J}^\Phi,\xi\right>=\d J^\Phi_\xi\,,\quad \inn_{\xi_M}\bm\tau=0\,,\quad \Lie_{R_\alpha} J^\Phi_\xi=0\,,\quad\forall\xi\in\mathfrak{g}\,,\quad \alpha=1,\ldots,k\,.
\end{equation}
\end{definition}
Note that $J_\xi^\Phi$ in the above definition is to be considered, for any fixed $\xi\in \mathfrak{g}$, as a function on $M$ taking values in $\mathbb{R}^k$. According to Definition \ref{Def::PolysymMomentumMap}, we can express the first and the second conditions in \eqref{Eq::polycosymMomentuMap} in the following manner
\[
\iota_{\bm\xi_M}\bm\omega=\d\left\<\mathbf{J}^\Phi,\bm\xi \right\>\quad {\rm and }\quad \iota_{\bm\xi_M}\bm\tau=0,\qquad\forall \bm\xi\in\mathfrak{g}^k,
\]
where $\bm\xi=(0,\ldots,\overset{(\alpha)}{\xi},\ldots,0)\in\mathfrak{g}^k$ for any $\xi\in\mathfrak{g}$ and $\alpha=1,\ldots,k$.
The Reeb vector fields $R_1,\ldots,R_k$ corresponding to $(M,\bm\tau,\bm\om)$ are tangent to the level sets of $\mathbf{J}^\Phi$. However, $R_1,\ldots, R_k$ are not tangent to the orbits of $\Phi$ since ${\bm 
\tau}$ is required to vanish when restricted to the tangent space to the orbits of $\Phi$.

The following $\Ad^{*k}$-equivariance definition is well-known and widely used in the literature. Nevertheless, we will hereafter show that it is not necessary.

\begin{definition}
    A $k$-polycosymplectic momentum map $\mathbf{J}^\Phi:M\rightarrow (\Lg^*)^k$ is {\it $\Ad^{*k}$-equivariant} if it satisfies
    \[
    \mathbf{J}^\Phi\circ \Phi_g=\Ad^{*k}_{g^{-1}}\circ\,\, \mathbf{J}^\Phi,\qquad \forall g\in G\,,
    \]
    where
    \[
    \begin{array}{ccc}
    \Ad^{*k}:G\x(\Lg^*)^k & \longrightarrow & (\Lg^*)^k\\
    (g,{\bm {\mu}}) &\longmapsto & \stackrel{k-\text{times}}{\overbrace{(\Ad^*_{g^{-1}}\otimes \dotsb \otimes \Ad^*_{g^{-1}})}}(\bm \mu). 
    \end{array}\,
    \]
    In other words, for every $g\in G$, the following diagram commutes 
\\
\begin{center}
    \begin{tikzcd}
    M
    \arrow[r,"\mathbf{J}^\Phi"]
    \arrow[d,"\Phi_g"]& (\Lg^*)^k
    \arrow[d,"\Ad^{*k}_{g^{-1}}"]\\
    M
    \arrow[r,"\mathbf{J}^\Phi"]&
    (\Lg^*)^k
    \end{tikzcd}
    \end{center}
\end{definition}

Note that $k$-polycosymplectic Lie group actions are the analogue in $k$-polycosymplectic geometry to $k$-polysymplectic Lie group actions.

Let us introduce the following definition. 

\begin{definition}
The four-tuple $(M^{\bm\omega}_{\bm\tau},h,{\bf J}^\Phi)$ is called a {\it $G$-invariant $k$-polycosymplectic Hamiltonian system} if  it consists of a $k$-polycosymplectic manifold $(M,\bm\tau,\bm\omega)$, a $k$-polycosymplectic Lie group action $\Phi:G\x M\rightarrow M$ such that $\Phi_g^*h=h$ for every $g\in G$, and the $k$-polycosymplectic momentum map $\mathbf{J}^\Phi$ related to $\Phi$. An {\it $\Ad^{*k}$-equivariant $G$-invariant $k$-polycosymplectic Hamiltonian system} is a $G$-invariant $k$-polycosymplectic Hamiltonian whose $k$-polycosymplectic momentum map is $\Ad^{*k}$-equivariant.
\end{definition}

\subsection{General \texorpdfstring{$k$}--polycosymplectic momentum maps}\label{Sec::without}

Let us prove in this section that the $\Ad^{*k}$-equivariance condition for a $k$-polycosymplectic momentum map $\mathbf{J}^\Phi$ can be substituted by a new type of equivariance. More exactly, we show that a $k$-polycosymplectic momentum map $\mathbf{J}^\Phi$ is ${\bm\Delta}$-equivariant relative to a hereafter constructed affine Lie group action on $(\Lg^*)^k$. The proofs of the following results that are essentially the same as in the $k$-polysymplectic case will be hereafter omitted. Additionally, the techniques introduced in this section are analogous to the procedures used in \cite{LMZ22}, where momentum maps on symplectic manifolds were generalised to cosymplectic manifolds, but much more technically involved.

\begin{proposition}
\label{Prop::PsiConstant-2}
Let $(M^{\bm\omega}_{\bm\tau},h,\mathbf{J}^\Phi)$ be a $G$-invariant $k$-polycosymplectic Hamiltonian system.  Define the functions on $M$ of the form
\[
    \bm{\psi}_{g,\boldsymbol{\xi}}=\mathbf{J}^\Phi_{\boldsymbol{\xi}}\circ \Phi_g-\mathbf{J}_{\Ad_{g^{-1}}^k\boldsymbol{\xi}}^\Phi:M\rightarrow \R\,,\quad\forall g\in G\,,\quad\forall \boldsymbol{\xi}\in\mathfrak{g}^k\,.
\]
Then, each $\bm\psi_{g,\boldsymbol{\xi}}$ is constant on $M$ for every $g\in G$ and $\boldsymbol{\xi}\in\Lg^k$. Moreover, the map $\bm\sigma:G\ni g\mapsto \bm\sigma(g)\in(\mathfrak{g}^*)^k$ such that $\<\bm\sigma(g),\bm\xi\>=\bm\psi_{g,\boldsymbol{\xi}}$ satisfies
\[
    \bm\sigma(g_1g_2)=\bm\sigma(g_1)+\Ad^{*k}_{g_1^{-1}} \bm\sigma(g_2)\,,\qquad\forall g_1,g_2\in G\,.
\]
\end{proposition}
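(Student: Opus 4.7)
The plan is to mirror the proof of Proposition~\ref{Prop::PsiConstant} for the $k$-polysymplectic case, since the only ingredients that enter are the defining equation $\iota_{\bm\xi_M}\bm\omega = \d \mathbf{J}^\Phi_{\bm\xi}$ of the momentum map and the invariance $\Phi_g^*\bm\omega = \bm\omega$ of the $k$-polycosymplectic form. The extra conditions $\iota_{\xi_M}\bm\tau = 0$ and $\Lie_{R_\alpha}J^\Phi_\xi = 0$ from Definition~\ref{Def::kpolycosymMomentumMap} are not actively used in this argument; they only enter the setup.

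First I would show that $\bm\psi_{g,\bm\xi}$ is locally constant by computing its exterior derivative. Pulling back and using naturality,
\begin{equation*}
\d(\mathbf{J}^\Phi_{\bm\xi}\circ \Phi_g) = \Phi_g^*\d\mathbf{J}^\Phi_{\bm\xi} = \Phi_g^*(\iota_{\bm\xi_M}\bm\omega) = \iota_{\Phi_{g^{-1}*}\bm\xi_M}\Phi_g^*\bm\omega = \iota_{\Phi_{g^{-1}*}\bm\xi_M}\bm\omega,
\end{equation*}
where the last equality uses $\Phi_g^*\bm\omega = \bm\omega$. Lemma~\ref{AdjointActionTh}, extended component-wise to $k$-vector fields, gives $\Phi_{g^{-1}*}\bm\xi_M = (\Ad_{g^{-1}}^k\bm\xi)_M$, so the right-hand side equals $\iota_{(\Ad_{g^{-1}}^k\bm\xi)_M}\bm\omega = \d\mathbf{J}^\Phi_{\Ad_{g^{-1}}^k\bm\xi}$. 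Hence $\d\bm\psi_{g,\bm\xi}=0$, and connectedness of $M$ forces $\bm\psi_{g,\bm\xi}$ to be a constant in $\mathbb{R}$ for each fixed $g$ and $\bm\xi$. Linearity of $\bm\psi_{g,\bm\xi}$ in $\bm\xi$ then defines the claimed map $\bm\sigma\colon G \to (\mathfrak{g}^*)^k$ via $\langle \bm\sigma(g),\bm\xi\rangle = \bm\psi_{g,\bm\xi}$, and rewriting the definition yields the constant-on-$M$ expression $\bm\sigma(g) = \mathbf{J}^\Phi\circ\Phi_g - \Ad^{*k}_{g^{-1}}\mathbf{J}^\Phi$.

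For the cocycle identity, I would argue purely algebraically. Using $\Phi_{g_1 g_2} = \Phi_{g_1}\circ\Phi_{g_2}$ and $\Ad^{*k}_{(g_1g_2)^{-1}} = \Ad^{*k}_{g_1^{-1}}\Ad^{*k}_{g_2^{-1}}$, and inserting a telescoping term,
\begin{equation*}
\bm\sigma(g_1g_2) = \bigl(\mathbf{J}^\Phi\circ\Phi_{g_1} - \Ad^{*k}_{g_1^{-1}}\mathbf{J}^\Phi\bigr)\circ\Phi_{g_2} + \Ad^{*k}_{g_1^{-1}}\bigl(\mathbf{J}^\Phi\circ\Phi_{g_2} - \Ad^{*k}_{g_2^{-1}}\mathbf{J}^\Phi\bigr).
\end{equation*}
The first term equals $\bm\sigma(g_1)\circ\Phi_{g_2}$; since $\bm\sigma(g_1)$ is constant on $M$, this reduces to $\bm\sigma(g_1)$. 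The second term is by definition $\Ad^{*k}_{g_1^{-1}}\bm\sigma(g_2)$, giving $\bm\sigma(g_1g_2) = \bm\sigma(g_1) + \Ad^{*k}_{g_1^{-1}}\bm\sigma(g_2)$, as required.

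There is essentially no obstacle: the argument is the word-for-word transcription of Proposition~\ref{Prop::PsiConstant} (and of its symplectic ancestor in \cite{OR04}) to the $k$-polycosymplectic setting, because the Reeb-related and $\bm\tau$-related conditions in Definition~\ref{Def::kpolycosymMomentumMap} play no role in the computation of $\d\bm\psi_{g,\bm\xi}$ or in the cocycle manipulation. The only point meriting care is the bookkeeping of the $\mathbb{R}^k$-valued pairings and the extension of Lemma~\ref{AdjointActionTh} component-wise to $k$-vector fields, which is immediate from applying that lemma in each slot.
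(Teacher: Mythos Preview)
Your proposal is correct and follows exactly the approach the paper takes: the paper explicitly states that the proof of Proposition~\ref{Prop::PsiConstant-2} ``is essentially the same as the proof of Proposition~\ref{Prop::PsiConstant} and it will be therefore omitted.'' Your observation that the conditions $\iota_{\xi_M}\bm\tau=0$ and $\Lie_{R_\alpha}J^\Phi_\xi=0$ play no role in this particular argument is accurate and worth noting.
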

The proof of Proposition \ref{Prop::PsiConstant-2} is essentially the same as the proof of Proposition \ref{Prop::PsiConstant} and it will be therefore omitted.
Note that the map ${\bm \sigma}$ in Proposition \ref{Prop::PsiConstant-2} can be brought into the form 
\[
\bm\sigma(g)=\mathbf{J}^\Phi\circ \Phi_g-\Ad^{*k}_{g^{-1}}\mathbf{J}^\Phi=(\sigma^1(g),\ldots,\sigma^k(g))\in (\mathfrak{g}^*)^k\,,
\]
where  $\sigma^\alpha(g)=\mathbf{J}^\Phi_\alpha\circ\Phi_g-\Ad^*_{g^{-1}}\mathbf{J}^\Phi_\alpha$ for each $\alpha=1,\ldots,k$.
The map ${\bm \sigma}$ is called the {\it co-adjoint cocycle associated with $\mathbf{J}^\Phi$}. 
It is worth stressing that ${\bf J}^\Phi$ is an $\Ad^{*k}$-equivariant $k$-polycosymplectic momentum map if, and only if,  $\bm\sigma=0$. Moreover,
 every $k$-polycosymplectic Lie group action admitting a $k$-polycosymplectic momentum map also induces a well-defined cohomology class $[\bm\sigma]$.


Let us introduce an analogue to Proposition \ref{Prop::GenEqJPolySym} to show that a $k$-polycosymplectic momentum map $\mathbf{J}^\Phi$ gives rise to a so-called affine Lie group action $\bm\Delta$ of $G$ on $(\Lg^*)^k$ satisfying $\mathbf{J}^\Phi\circ\Phi_g=\bm\Delta_g\circ\mathbf{J}^\Phi$ for every $g\in G$.

\begin{proposition}\label{Prop::GenEqJPolyco}
Let ${\bf J}^\Phi$ be a momentum map for the $k$-polycosymplectic Lie group action $\Phi$ with an associated co-adjoint cocycle $\bm\sigma$. Then,
\begin{enumerate}[{\rm(1)}]
    \item the map 
    \[
    \bm\Delta :G\times (\Lg^*)^k\ni(g,\bm \mu)\longmapsto \Ad^{*k}_{g^{-1}}\bm\mu+\bm\sigma(g)={\bm\Delta}_g{\bm \mu}\in(\Lg^*)^k\,,
    \]
is a Lie group action of $G$ on $(\Lg^*)^k$,
    \item the $k$-polycosymplectic momentum map ${\bf J}^\Phi$ is equivariant with respect to $
    \bm\Delta$, in other words, every $g\in G$ gives rise to a commutative diagram
\end{enumerate}

\begin{center}
    \begin{tikzcd}
    M
    \arrow[r,"\mathbf{J}^\Phi"]
    \arrow[d,"\Phi_g"]& (\Lg^*)^k
    \arrow[d,"\bm\Delta_g"]\\
    M
    \arrow[r,"\mathbf{J}^\Phi"]&
    (\Lg^*)^k
    \end{tikzcd}
    \end{center}
\end{proposition}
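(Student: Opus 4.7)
The strategy is to mirror, essentially verbatim, the proof of Proposition \ref{Prop::GenEqJPolySym} from the $k$-polysymplectic setting, since the statement is formally the same and the only input used there is the cocycle identity for $\bm\sigma$. That identity in the $k$-polycosymplectic realm is already supplied by Proposition \ref{Prop::PsiConstant-2}, so no new differential-geometric work is required; the argument is purely algebraic on $(\mathfrak{g}^*)^k$.

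My plan is as follows. First, I would check the identity axiom for $\bm\Delta$. Setting $g_1=g_2=e$ in the cocycle identity $\bm\sigma(g_1g_2)=\bm\sigma(g_1)+\Ad^{*k}_{g_1^{-1}}\bm\sigma(g_2)$ of Proposition \ref{Prop::PsiConstant-2} yields $\bm\sigma(e)=2\bm\sigma(e)$, whence $\bm\sigma(e)=0$. Combined with $\Ad^{*k}_e=\Id_{(\mathfrak{g}^*)^k}$, this gives $\bm\Delta(e,\bm\mu)=\bm\mu$ for every $\bm\mu\in(\mathfrak{g}^*)^k$.

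Next, I would verify the composition axiom. For every $g_1,g_2\in G$ and $\bm\mu\in(\mathfrak{g}^*)^k$, I compute
\begin{align*}
\bm\Delta(g_1,\bm\Delta(g_2,\bm\mu))
&=\Ad^{*k}_{g_1^{-1}}\bigl(\Ad^{*k}_{g_2^{-1}}\bm\mu+\bm\sigma(g_2)\bigr)+\bm\sigma(g_1)\\
&=\Ad^{*k}_{(g_1g_2)^{-1}}\bm\mu+\Ad^{*k}_{g_1^{-1}}\bm\sigma(g_2)+\bm\sigma(g_1)\\
&=\Ad^{*k}_{(g_1g_2)^{-1}}\bm\mu+\bm\sigma(g_1g_2)=\bm\Delta(g_1g_2,\bm\mu),
\end{align*}
using the cocycle identity of Proposition \ref{Prop::PsiConstant-2} in the penultimate step and the standard fact $\Ad^{*k}_{g_1^{-1}}\Ad^{*k}_{g_2^{-1}}=\Ad^{*k}_{(g_1g_2)^{-1}}$ coming from the anti-homomorphism property of the coadjoint representation. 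This proves (1).

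Finally, for (2), I would merely unwind the definitions: from $\bm\sigma(g)=\mathbf{J}^\Phi\circ\Phi_g-\Ad^{*k}_{g^{-1}}\mathbf{J}^\Phi$ (Proposition \ref{Prop::PsiConstant-2}) one reads off
\begin{equation*}
\bm\Delta_g\circ\mathbf{J}^\Phi=\Ad^{*k}_{g^{-1}}\mathbf{J}^\Phi+\bm\sigma(g)=\mathbf{J}^\Phi\circ\Phi_g,\qquad\forall g\in G,
\end{equation*}
which is exactly the commutativity of the diagram. There is no real obstacle here; the only subtlety is the bookkeeping $\Ad^{*k}_{g_1^{-1}}\Ad^{*k}_{g_2^{-1}}=\Ad^{*k}_{(g_1g_2)^{-1}}$, which I would either justify by a one-line appeal to $\Ad_{g_1g_2}=\Ad_{g_1}\Ad_{g_2}$ together with transposition and inversion, or note as a standard fact. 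Everything else is a direct transcription of the $k$-polysymplectic proof, and the statement that $\mathbf{J}^\Phi$ is $\bm\Delta$-equivariant is then immediate.
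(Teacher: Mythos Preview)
Your proposal is correct and follows essentially the same route as the paper: the paper explicitly states that the proof of this proposition is the same as in the $k$-polysymplectic case (Proposition \ref{Prop::GenEqJPolySym}), and what you have written is precisely that argument, with Proposition \ref{Prop::PsiConstant-2} supplying the cocycle identity in place of Proposition \ref{Prop::PsiConstant}. The only cosmetic difference is that you derive $\bm\sigma(e)=0$ from the cocycle identity itself, whereas the paper's proof of Proposition \ref{Prop::GenEqJPolySym} simply asserts it; your derivation is correct and self-contained.
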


Again, the proof of this proposition is the same as in the $k$-polysymplectic case, i.e. like in Proposition \ref{Prop::GenEqJPolyco}. 

Note that ${\bm \Delta}$ can be rewritten in the following way
\begin{align}
{\bm \Delta}(g,\mu^1,\ldots,\mu^k)&=({\rm Ad}_{g^{-1}}^*(\mu^1)+\sigma^1(g),\ldots,{\rm Ad}_{g^{-1}}^*(\mu^k)+\sigma^k(g))\\&=(\Delta^1(g,\mu^1),\ldots,\Delta^k(g,\mu^k))\in (\mathfrak{g}^*)^k\,,
\end{align}
which gives rise to defining $k$ Lie group actions 
\[\
\Delta^\alpha:(g,\vartheta)\in G\times \mathfrak{g}^*\mapsto {\rm Ad}_{g^{-1}}^*(\vartheta)+\sigma^\alpha(g)\in \mathfrak{g}^*\,,\qquad \alpha=1,\ldots,k\,.
\]

\subsection{\texorpdfstring{$k$}--Polycosymplectic Marsden--Weinstein reduction theorem}
\label{Sec::Polyco&Polysym}

This section presents a $k$-polycosymplectic Marsden--Weinstein reduction procedure by means of a particular type of $k$-polysymplectic Marsden--Weinstein reduction. Similarly to the cosymplectic case in Section \ref{Sec:CosymSym}, one can extend a $k$-polycosymplectic manifold to a $k$-polysymplectic manifold of a special sort: a so-called fibred one. Moreover, the $k$-polycosymplectic momentum map $\mathbf{J}^\Phi: M\rightarrow (\Lg^*)^k$ related to a Lie group action $\Phi: G\x M\rightarrow M$ gives rise to an extended momentum map of an extended Lie group action defined on a  manifold $\R^k\x M$ endowed with a $k$-polysymplectic fibred structure. Then, the $k$-polycosymplectic Marsden--Weinstein reduction boils down to a Marsden--Weinstein reduction, devised in \cite{MRSV15}, of a $k$-polysymplectic fibred structure on $\R^k\x M$.

Theorem \ref{Prop::PolyCosymSymEqui} illustrates  how a $k$-polycosymplectic manifold $(M,{\bm \tau},{\bm \omega})$ gives rise to a $k$-polysymplectic fibred manifold $(\mathbb{R}^k\times M,\widetilde{\bm \omega})$ with the so-called {\it $k$-polysymplectic Reeb vector fields} and vice versa. We will show that the $k$-polysymplectic fibred manifold admits a global symmetry. 
\begin{theorem}
\label{Prop::PolyCosymSymEqui}
    Let $\bm\omega\in\Omega^2(M,\R^k)$, $\bm\tau\in\Omega^1(M,\R^k)$, and let ${\rm pr}_M:\R^k\x M\rightarrow M$ be the canonical projection onto $M$. Let $\bm u=(u^1,\ldots,u^k)$ be a natural global coordinate system in $\R^k$. Then,  $(M,\bm\tau,\bm\omega)$ is a $k$-polycosymplectic manifold if, and only if,  $(\R^k\times M,{\rm pr}_M^*\bm\omega+\d\bm u\barwedge{\rm pr}_M^*\bm\tau=\widetilde{\bm\omega})$ is a $k$-polysymplectic manifold, where $\d\bm u=\d u^\alpha\otimes e_\alpha$, admitting some  vector fields $\widetilde{R}_1,\ldots, \widetilde{R}_k$ on $\R^k\times M$, so-called $k$-polysymplectic Reeb vector fields,  such that $\iota_{\widetilde{R}_\alpha} {\widetilde{\omega}}^\beta=-\delta^\beta_\alpha\d u^\alpha$ and $ \widetilde{R}_\alpha  u^\beta=0$ for $\alpha,\beta=1,\ldots,k$. 
\end{theorem}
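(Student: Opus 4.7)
The plan is to work out a single coordinate-free identity that drives both implications and then read off the conclusions from it. Decompose a tangent vector at $(u,x)\in\R^k\times M$ as $X=A^\alpha(\partial/\partial u^\alpha)_u+B$ with $B\in T_xM$. A direct computation, using that $\d u^\alpha$ annihilates $B$ and that $\mathrm{pr}_M^*\bm\tau$ annihilates $\partial/\partial u^\alpha$, yields
\[
\iota_X\widetilde\omega^\alpha=\mathrm{pr}_M^*\!\bigl(\iota_B\omega^\alpha+A^\alpha\tau^\alpha\bigr)-(\iota_B\tau^\alpha)\,\d u^\alpha\qquad(\text{no sum on }\alpha).
\]
Similarly $\d\widetilde\omega^\alpha=\mathrm{pr}_M^*\d\omega^\alpha-\d u^\alpha\wedge \mathrm{pr}_M^*\d\tau^\alpha$; since the two summands live in different bidegrees with respect to $\d u$, and $\mathrm{pr}_M$ is a surjective submersion, $\d\widetilde{\bm\omega}=0$ is equivalent to $\d\bm\omega=0$ and $\d\bm\tau=0$ on $M$. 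This settles closedness in both directions at once.

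For the forward direction, assume $(M,\bm\tau,\bm\omega)$ is $k$-polycosymplectic, and let $R_1,\ldots,R_k$ be the Reeb vector fields furnished by Proposition \ref{prop:reeb-polyco}. If $X\in\ker\widetilde{\bm\omega}$, the identity forces $\iota_B\omega^\alpha+A^\alpha\tau^\alpha=0$ and $\iota_B\tau^\alpha=0$ for every $\alpha$. Contracting the first equation with $R_\alpha$ and using $\iota_{R_\alpha}\omega^\alpha=0$ together with $\iota_{R_\alpha}\tau^\alpha=1$ gives $A^\alpha=0$, and the residue shows $B\in\ker\bm\omega\cap\ker\bm\tau=0$; hence $\widetilde{\bm\omega}$ is nondegenerate. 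Define $\widetilde R_\alpha$ as the canonical horizontal lift of $R_\alpha$ (no $\partial/\partial u^\beta$ component, $M$-part equal to $R_\alpha$). Then $\widetilde R_\alpha u^\beta=0$ is immediate, and specialising the identity with $A^\beta=0$ and $B=R_\alpha$ yields $\iota_{\widetilde R_\alpha}\widetilde\omega^\beta=-\delta^\beta_\alpha\d u^\alpha$.

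For the backward direction, assume $(\R^k\times M,\widetilde{\bm\omega})$ is $k$-polysymplectic with $\widetilde R_\alpha$ as prescribed. The bidegree argument gives $\d\bm\omega=0$ and $\d\bm\tau=0$. To see $\ker\bm\omega\cap\ker\bm\tau=0$, lift any $v\in T_xM$ lying in this intersection to $(0,v)\in T_{(u,x)}(\R^k\times M)$ and read off from the identity that it is in $\ker\widetilde{\bm\omega}=0$. The delicate point is the rank condition $\mathrm{rk}\,\ker\bm\omega=k$. Since $\widetilde R_\alpha u^\beta=0$, the value $\widetilde R_\alpha(u,x)$ lies in the fibre tangent space $T_xM$; inserting it into the identity, the hypothesis $\iota_{\widetilde R_\alpha}\widetilde\omega^\beta=-\delta^\beta_\alpha\d u^\alpha$ is equivalent to the pair of $M$-level conditions $\iota_{\widetilde R_\alpha}\omega^\beta=0$ and $\iota_{\widetilde R_\alpha}\tau^\beta=\delta^\beta_\alpha$. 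Because $\ker\bm\omega\cap\ker\bm\tau=0$, these linear conditions at $x$ have at most one solution, so $\widetilde R_\alpha(u,x)$ is independent of $u$ and descends to a vector field $R_\alpha$ on $M$. For any $v\in\ker\bm\omega_{\,x}$ the difference $v-(\iota_v\tau^\alpha)R_\alpha(x)$ lies in $\ker\bm\omega\cap\ker\bm\tau=0$, whence $\ker\bm\omega=\mathrm{span}\{R_1,\ldots,R_k\}$, which has rank $k$ since $\iota_{R_\alpha}\tau^\beta=\delta^\beta_\alpha$ forces linear independence.

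The main technical obstacle is the rank computation in the backward direction: the existence of fibre-tangent vector fields $\widetilde R_\alpha$ on $\R^k\times M$ does not \emph{a priori} yield vector fields on $M$, and the argument crucially uses the $\widetilde{\bm\omega}$-nondegeneracy (through $\ker\bm\omega\cap\ker\bm\tau=0$) to promote the $\widetilde R_\alpha$ to genuine Reeb vector fields of $(M,\bm\tau,\bm\omega)$ and thereby to control the rank of $\ker\bm\omega$.
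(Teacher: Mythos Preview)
Your proof is correct and follows the same overall architecture as the paper's: closedness via the evident splitting, forward nondegeneracy via the Reeb vector fields, and the backward rank condition via descended Reeb fields. The forward direction is essentially identical, just organised around your single contraction identity rather than the paper's separate contractions with $\partial/\partial u^\alpha$ and $\widetilde R_\alpha$.

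The one genuinely different step is how you show that the $\widetilde R_\alpha$ descend to $M$ in the backward direction. The paper argues differentially: since $\mathscr L_{\partial/\partial u^\gamma}\widetilde{\bm\omega}=0$ and $\iota_{\widetilde R_\alpha}\widetilde{\bm\omega}$ is determined, one gets $\iota_{[\partial/\partial u^\gamma,\widetilde R_\alpha]}\widetilde{\bm\omega}=0$, and nondegeneracy forces $[\partial/\partial u^\gamma,\widetilde R_\alpha]=0$, hence projectability. You instead argue pointwise and linearly: once $\ker\bm\omega\cap\ker\bm\tau=0$ is established, the Reeb conditions $\iota_v\omega^\beta=0$, $\iota_v\tau^\beta=\delta^\beta_\alpha$ admit at most one solution in $T_xM$, so $\widetilde R_\alpha(u,x)$ is forced to be independent of $u$. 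Your route is shorter and purely algebraic, avoiding any Lie-derivative computation; the paper's route has the mild advantage of exhibiting the invariance $[\partial/\partial u^\gamma,\widetilde R_\alpha]=0$ explicitly, which is used again later in the reduction theorem. Your subsequent rank argument (subtracting $(\iota_v\tau^\alpha)R_\alpha$ from $v\in\ker\bm\omega$) is also cleaner than the paper's dimension count via annihilators, though the two are equivalent.
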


\begin{proof}
     First, note that  $\widetilde{\bm\omega}$ decomposes into $k$ components, which, in view of the proof of Lemma \ref{lem:CosymSym}, yields that $\widetilde{\bm\omega}$ is closed if, and only if, $\bm\omega$ and $\bm\tau$ are closed.
    

Second, let us show that if $(M,{\bm \tau},{\bm \omega})$ is a $k$-polycosymplectic manifold, then $\widetilde{\bm\omega}$ is non-degenerate and it possesses $k$-polysymplectic Reeb vector fields.

By Proposition \ref{prop:reeb-polyco}, there exists a family of Reeb vector fields $R_1,\dots, R_k$ on $M$ for $(M,{\bm \tau},{\bm \omega})$ that can be lifted, in a unique manner, to vector fields $\widetilde R_1,\ldots,\widetilde R_k\in\X(\mathbb{R}^k\times M)$ so that $\widetilde R_\beta u^\alpha=0$ for $\alpha,\beta=1,\ldots,k$ and they project onto $R_1,\dots, R_k$ via ${\rm pr}_M$. By construction of $\widetilde{\bm\omega}$, it follows that $\widetilde R_1,\ldots,\widetilde{R}_k$ satisfy $\iota_{\widetilde{R}_\alpha}\widetilde{\omega}^\beta=-\delta_\alpha^{\widehat{\beta}} du^{\widehat{\beta}}$ for $\alpha,\beta=1,\ldots,k$ and they become $k$-polysymplectic Reeb vector fields for $\widetilde{\bm\omega}$.

Assume that $X\in \X(\R^k\x M)$ takes values in $\ker({\rm pr}_M^*\bm\omega + \d \bm u\barwedge{\rm pr}_M^*\bm\tau)$. Then,
\[
\iota_{\partial/\partial u^\alpha}\iota_X\bm{ \widetilde{\omega}}=0 \quad\Longrightarrow\quad \iota_X{\rm pr}^*_M\tau^\alpha=0\,,\qquad \alpha=1,\ldots,k\,.
\]
Hence, $X$ takes values in $\ker {\rm pr}^*_M\boldsymbol\tau$.
 Then,
\[
\iota_{\widetilde{R}_\alpha}\iota_X\bm{ \widetilde{\omega}}=0\quad\Longrightarrow\quad Xu^\alpha=0\,,\qquad \alpha=1,\ldots,k\,.
\]
Therefore,
$$
\iota_X{\rm pr}_M^*\bm{\omega}=0
$$
and $X=0$ since $\ker\d\bm u\cap\ker {\rm pr}^*_M \bm\tau\cap \ker{\rm pr}^*_M \bm \omega=0$. Hence, $\bm{\widetilde\omega}$ is non-degenerate. 

Third, and the other way around, if $(\R^k\x M,\wtl{\bm\omega})$ is a $k$-polysymplectic manifold with $k$-polysymplectic Reeb vector fields and $X$ takes values in $\ker \bm \omega\cap \ker \bm \tau$, one can lift $X$ to a vector field $\widetilde{X}$ on $\R^k\times M$ in a unique way so that ${\rm pr}_{M*}\widetilde{X}=X$ and $\iota_{\widetilde{X}}\d{\bm{u}}=0$. It follows that $\iota_{\widetilde{X}}\widetilde{\bm\omega}=0$ and $\widetilde{X}=0$ since $\widetilde{\bm \omega}$ is assumed to be non-degenerate. Hence, $X=0$, which shows that $\ker\bm\omega\cap\ker\bm\tau=0$.

Let us prove that the $k$-polysymplectic Reeb vector fields $\widetilde{R}_1,\ldots,\widetilde{R}_k$ project onto vector fields on $M$ spanning a distribution of rank $k$ equal to the kernel of $\bm\omega$. 
Note that $\iota_{\widetilde{R}_\alpha}\widetilde{\omega}^\beta=-\delta^\beta_{\widehat{\alpha}}\d u^{\widehat{\alpha}}$ for $\alpha,\beta=1,\ldots,k$. Since $\widetilde{\bm \omega}$ is a $k$-polysymplectic form and invariant relative to the Lie derivatives with respect to $\partial/\partial u^1,\ldots,\partial/\partial u^k$, it follows from the definition $\widetilde{R}_1,\ldots,\widetilde{R}_k$ that $\Lie_{\partial/\partial u^\alpha}\iota_{\widetilde{R}_\beta}\widetilde{\bm \omega}=0$ and then $\iota_{[\partial/\partial u^\alpha,\widetilde{R}_\beta]}\widetilde{\bm \omega}=0$ for every $\alpha,\beta=1,\ldots,k$.  Hence, $\widetilde{R}_1,\ldots,\widetilde{R}_k$ project onto $M$. Then, for every $\alpha,\beta=1,\ldots,k$, one has that
\[
\iota_{\widetilde{R}_\alpha}\widetilde{\omega}^\beta=\iota_{\widetilde{R}_\alpha}{\rm pr}_M^*\omega^\beta+(\iota_{\widetilde{R}_\alpha}\d u^{\widehat{\beta}}){\rm pr}_M^*\tau^{\widehat{\beta}} -( \iota_{\widetilde{R}_\alpha}{\rm pr}_M^*\tau^{\widehat{\beta}})\d u^{\widehat{\beta}}=-\d u^{\widehat{\alpha}}\delta^\beta_{\widehat{\alpha}}\,,
\]
where we stress that there is no sum over the possible values of $\beta$ or $\alpha$ as indicated by the hatted indexes.
Hence, again without summing over $\beta$, 
\begin{equation}\label{eq::con1}
\iota_{\partial/\partial s^{\widehat{\beta}}}\iota_{\widetilde{R}_\alpha}\widetilde{\omega}^{\widehat{\beta}}=-\iota_{\widetilde{R}_\alpha}{\rm pr}_M^*\tau^\beta=-\delta^\alpha_\beta \quad\Longrightarrow\quad \langle \tau^\beta,{\rm pr}_{M*}\widetilde{R}_\alpha\rangle=\delta^\alpha_\beta\,,\qquad\forall \alpha,\beta=1,\ldots,k\,,
\end{equation}
and
$$
\iota_{\widetilde{R}_\alpha}{\rm pr}_M^*\bm{\omega}=0\,.
$$
The condition \eqref{eq::con1} yields that the vector fields ${\rm pr}_{M*}\widetilde{R}_\alpha=R_\alpha$ with $\alpha=1,\ldots,k$, span a distribution on $M$ of rank $k$ taking values in $\ker {\bm \omega}$. The rank of $D=\ker {\bm \omega}$ cannot be larger than $k$. Otherwise, there would exist a non-zero tangent vector $v_x\in \ker {\bm \omega}\cap \ker {\bm \tau}$ since the annihilator of $\langle \tau^1|_D,\ldots,\tau^k|_D\rangle$ in $D$ would be non-zero and we have already proved that $\ker {\bm \omega}\cap \ker {\bm \tau}=0$.

The first, second, and third points proved above give the searched equivalence between a $k$-polycosymplectic manifold $(M,{\bm \tau},{\bm \omega})$ and a $k$-polysymplectic manifold $(\mathbb{R}^k\times M,\widetilde{\bm \omega})$ having $k$-polysymplectic Reeb vector fields.
\end{proof}

\begin{definition}
The $k$-polysymplectic manifolds satisfying the hypotheses of Theorem \ref{Prop::PolyCosymSymEqui} are called \textit{$k$-polysymplectic fibred manifolds}. In particular, $(\R^k\times M,{\rm pr}_M^*\bm\omega+\d\bm u\barwedge{\rm pr}_M^*{\bm\tau}=\widetilde{\bm\omega})$ is called the   $k$-polysymplectic fibred manifold associated with the $k$-polycosymplectic manifold $(M,{\bm \tau},{\bm \omega})$.
\end{definition}

\begin{remark}
The condition on the existence of the $k$-polysymplectic Reeb vector fields $\widetilde R_\alpha$ in Theorem \ref{Prop::PolyCosymSymEqui} is essential to ensure that a $k$-polysymplectic structure on $\R^k\times M$ gives rise to a $k$-polycosymplectic one on $M$. Consider for instance the manifold $M = \R^4$ with natural linear coordinates $\{x,y,w,v\}$ and the closed differential forms in $M$ given by
$$ \tau^1 = \d y\,,\qquad \tau^2 = \d x\,,\qquad \omega^1 = \d x \wedge\d w\,,\qquad \omega^2 = \d y\wedge\d v\,. $$
These give rise to $\bm \tau=\tau^1\otimes e_1+ \tau^2\otimes e_2$ and $\bm\omega = \omega^1\otimes e_1 + \omega^2\otimes e_2$. We can construct the closed differential forms $\widetilde\omega^1,\widetilde\omega^2\in\Omega^2(\R^2\times M)$ given by
$$ \widetilde\omega^1 = \omega^1 + \d u^1\wedge \tau^1 = \d x\wedge\d w + \d u^1\wedge\d y\,, $$
$$ \widetilde\omega^2 = \omega^2 + \d u^2\wedge \tau^2 = \d y\wedge\d v + \d u^2\wedge\d x\,, $$
which in turn give rise to the $\R^2$-valued differential form $\widetilde{\bm\omega} = \widetilde\omega^1\otimes e_1 + \widetilde\omega^2\otimes e_2\in\Omega^2(\R^2\times M,\R^2)$.
Since $\ker\widetilde\omega^1 = \left< \tparder{}{u^2},\tparder{}{v} \right>$ and $\ker\widetilde\omega^2 = \left< \tparder{}{u^1},\tparder{}{w} \right>$, it is clear that 
\[
\ker \widetilde{\boldsymbol{\omega}}=\ker\left(\wtl\omega^1\otimes e_1+\wtl\omega^2\otimes e_2\right)=\ker\widetilde\omega^1\cap\ker\widetilde\omega^2 = 0\,,
\]
and hence $(\R^2\times M,\boldsymbol{\widetilde\omega})$ is a two-polysymplectic manifold. A simple computation shows that $\widetilde{\bm \omega}$ has no two-polysymplectic Reeb vector fields. 
Now, although
$$
\ker\boldsymbol{\tau}\cap\ker\boldsymbol{\omega}=\ker\tau^1\cap\ker\tau^2\cap\ker\omega^1\cap\ker\omega^2 = 0\,, 
$$
the rank of $\ker\omega^1\cap\ker\omega^2$ is not 2, and thus $(\bm\tau,\bm\omega)$ cannot be a two-polycosymplectic structure on $M$. 

\end{remark}
Let $(M^{\bm\omega}_{\bm\tau}, h,{\bf J}^\Phi)$ be a $G$-invariant $k$-polycosymplectic Hamiltonian system. Then, a Lie group action $\Phi:G\x M\rightarrow M$ with an associated $k$-polycosymplectic momentum map $\mathbf{J}^\Phi:M\rightarrow(\mathfrak{g}^*)^k$ extends to $\R^k\x M$ in the following way
\begin{equation}\label{Eq::ExtendedLieActionPoly}
    \wtl \Phi:G\x\R^k\x M\ni(g,\bm{u},x)\longmapsto(\bm{u},\Phi(g,x))\in\R^k\x M\,,
\end{equation}
and a mapping
\begin{equation}\label{Eq::ExtendedMomentumMapPoly}
    \bfJ^{\widetilde{\Phi}}:\R^k\x M\ni(\bm{u},x)\longmapsto\mathbf{J}^\Phi(x)\in (\Lg^*)^k\,,
\end{equation}
for every $(\bm{u},x)\in \R^k\x M$ and every $g\in G$. Note that $\mathbf{ J}^{\widetilde{\Phi}}$ is a $k$-polysymplectic momentum map for $\widetilde{\Phi}$ relative to the $k$-polysymplectic manifold $(\R^k\x M,\widetilde{\bm\omega})$ by extending the $k$-polycosymplectic manifold $(M,\bm\tau,\bm\omega)$. Let us provide Lemmas \ref{Cor::PolyDecomposition} and \ref{Lemm::MomentumMapRegValue} whose proofs are a straightforward consequence of previous facts and the relation ${\rm pr}_{\mathbb{R}^k}\circ\widetilde\Phi_g={\rm pr}_{\mathbb{R}^k}$, where ${\rm pr}_{\mathbb{R}^k}:\mathbb{R}^k\times M\rightarrow \mathbb{R}^k$, is the natural projection onto $\mathbb{R}^k$, for every $g\in G$. 

\begin{lemma}
\label{Cor::PolyDecomposition}
Assume that the hypotheses of Theorem \ref{Th::PolisymplecticReductionJ} remain valid. If $(\R^k\x M,\widetilde{\bm\omega})$ is a $k$-polysymplectic manifold, then 
\[
{\bf J}^{\widetilde{\Phi}-1}(\bm{\mu})\simeq\R^k\times {\rm pr}_M\big({\bf J}^{\widetilde{\Phi} -1}(\bm{\mu})\big)\simeq \R^k\x \mathbf{J}^{\Phi-1}(\bm\mu)\,,\quad
\mathbf{J}^{\widetilde{\Phi}-1}(\bm{\mu})/G^\Delta_{\bm{\mu}}\simeq \R^k\x \left(\mathbf{J}^{\Phi-1}(\bm{\mu})/G^{\bm\Delta}_{\bm{\mu}}\right)
\]
for every $\bm{\mu}\in(\Lg^*)^k$, where the quotients $\mathbf{J}^{\widetilde{\Phi}-1}(\bm{\mu})/G^\Delta_{\bm{\mu}}$  and $\mathbf{J}^{\Phi-1}(\bm{\mu})/G^{\bm\Delta}_{\bm{\mu}}$ are relative to the actions of $G^{\bm\Delta}_{\bm{\mu}}$ on $\mathbf{J}^{\widetilde{\Phi}-1}(\bm{\mu})$ and $\mathbf{J}^{\Phi-1}(\bm{\mu})$, respectively.
\end{lemma}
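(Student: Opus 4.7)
The plan is to exploit the product structure of both the extended action $\widetilde\Phi$ and the extended momentum map $\bfJ^{\widetilde\Phi}$, each of which is constant in one of the two factors of $\mathbb{R}^k\times M$. First, I would establish the identification of level sets. By the very definition \eqref{Eq::ExtendedMomentumMapPoly}, the map $\bfJ^{\widetilde\Phi}$ factors as $\bfJ^{\widetilde\Phi}=\bfJ^\Phi\circ \mathrm{pr}_M$. Hence a point $(\bm u,x)\in\mathbb{R}^k\times M$ lies in $\bfJ^{\widetilde\Phi-1}(\bm\mu)$ if and only if $x\in\bfJ^{\Phi-1}(\bm\mu)$, with no condition placed on $\bm u$. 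This immediately yields $\bfJ^{\widetilde\Phi-1}(\bm\mu)=\mathbb{R}^k\times \bfJ^{\Phi-1}(\bm\mu)$, and shows that $\mathrm{pr}_M$ restricts to a surjective submersion $\bfJ^{\widetilde\Phi-1}(\bm\mu)\to\bfJ^{\Phi-1}(\bm\mu)$, giving the first chain of diffeomorphisms.

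Second, I would observe that the affine actions and their isotropies coincide in both settings. Since $\bfJ^{\widetilde\Phi}=\bfJ^\Phi\circ \mathrm{pr}_M$ and $\widetilde\Phi_g$ acts as the identity on the $\mathbb{R}^k$-factor, the co-adjoint cocycle $\bm\sigma$ associated with $\bfJ^{\widetilde\Phi}$ agrees with that of $\bfJ^\Phi$. Consequently, both momentum maps induce the same affine Lie group action $\bm\Delta$ on $(\mathfrak g^*)^k$, and the isotropy group at $\bm\mu$ is in both cases $G^{\bm\Delta}_{\bm\mu}$ (the notation $G^\Delta_{\bm\mu}$ appearing in the statement is the same object).

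Third, I would use the explicit relations $\mathrm{pr}_{\mathbb{R}^k}\circ\widetilde\Phi_g=\mathrm{pr}_{\mathbb{R}^k}$ and $\mathrm{pr}_M\circ\widetilde\Phi_g=\Phi_g\circ\mathrm{pr}_M$ from \eqref{Eq::ExtendedLieActionPoly} to describe the orbits. Through a point $(\bm u,x)\in \mathbb{R}^k\times \bfJ^{\Phi-1}(\bm\mu)$ the $G^{\bm\Delta}_{\bm\mu}$-orbit of $\widetilde\Phi$ is precisely $\{\bm u\}\times(G^{\bm\Delta}_{\bm\mu}\cdot x)$. Therefore the assignment $(\bm u,x)\mapsto (\bm u,[x])$ is constant along orbits and descends to a bijection $\bfJ^{\widetilde\Phi-1}(\bm\mu)/G^{\bm\Delta}_{\bm\mu}\to \mathbb{R}^k\times(\bfJ^{\Phi-1}(\bm\mu)/G^{\bm\Delta}_{\bm\mu})$; under the quotientability assumption inherited from Theorem \ref{Th::PolisymplecticReductionJ}, both sides carry smooth structures making this map a diffeomorphism.

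There is essentially no serious obstacle to overcome: because $\widetilde\Phi$ neither mixes the two factors of $\mathbb{R}^k\times M$ nor changes the $\mathbb{R}^k$-component, and because $\bfJ^{\widetilde\Phi}$ ignores that component entirely, the proof is a direct unpacking of the definitions. The only mildly delicate point is checking that the affine action $\bm\Delta$ (and thus $G^{\bm\Delta}_{\bm\mu}$) transfers unchanged from the $k$-polycosymplectic to the $k$-polysymplectic fibred setting, which is handled by the cocycle identification above.
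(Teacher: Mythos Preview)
Your proposal is correct and matches the paper's approach exactly: the paper does not give a detailed proof of this lemma but simply states that it is ``a straightforward consequence of previous facts and the relation ${\rm pr}_{\mathbb{R}^k}\circ\widetilde\Phi_g={\rm pr}_{\mathbb{R}^k}$'', which is precisely the product-structure argument you have unpacked. In fact, your write-up supplies more detail than the paper does, including the explicit identification of the co-adjoint cocycles (and hence of $G^{\bm\Delta}_{\bm\mu}$) in the two settings.
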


\begin{lemma}
\label{Lemm::MomentumMapRegValue}
A $k$-polycosymplectic momentum map $\mathbf{J}^\Phi:M\rightarrow (\mathfrak{g}^*)^k$ is $\bm\Delta$-equivariant with respect to a Lie group action $\Phi:G\times M\rightarrow M$ if and only if the associated $k$-polysymplectic momentum map $\mathbf{J}^{\widetilde\Phi}:\mathbb{R}^k\times M\rightarrow (\mathfrak{g}^*)^k$ is $\bm\Delta$-equivariant relative to $\widetilde{\Phi}:G\times \mathbb{R}^k\times M\rightarrow \mathbb{R}^k\times M$. Additionally, $\bm{\mu}\in(\Lg^*)^k$ is a (resp. weak) regular value of $\mathbf{J}^\Phi$ if and only if $\bm{\mu}$ is a (resp. weak) regular value of $\mathbf{J}^{\widetilde \Phi}$. Moreover, $\mathbf{J}^{\widetilde \Phi-1}(\bm{\mu})=\R^k\times \mathbf{J}^{\Phi-1}(\bm{\mu})$ and $\mathbf{J}^{\widetilde \Phi-1}(\bm\mu)$ is quotientable by $G^{\bm \Delta}_{\bm \mu}$ if and only if $\mathbf{J}^{\Phi-1}(\bm{\mu})$ is so. 
\end{lemma}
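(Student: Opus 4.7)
The plan is to prove each of the four assertions directly from the explicit forms of $\widetilde{\Phi}$ and $\mathbf{J}^{\widetilde{\Phi}}$ given in \eqref{Eq::ExtendedLieActionPoly}–\eqref{Eq::ExtendedMomentumMapPoly}. The key observation, underlying everything, is the factorisation $\mathbf{J}^{\widetilde{\Phi}}=\mathbf{J}^\Phi\circ\mathrm{pr}_M$ together with the intertwining $\mathrm{pr}_M\circ\widetilde{\Phi}_g=\Phi_g\circ\mathrm{pr}_M$ for every $g\in G$, which is immediate from $\widetilde{\Phi}_g(\bm u,x)=(\bm u,\Phi_g(x))$.

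First I would handle $\bm\Delta$-equivariance. Since $\bm\Delta$ acts on $(\mathfrak{g}^*)^k$ independently of any point of $M$ or $\mathbb{R}^k\times M$, the identity $\mathbf{J}^{\widetilde{\Phi}}\circ\widetilde{\Phi}_g=\bm\Delta_g\circ\mathbf{J}^{\widetilde{\Phi}}$ evaluated at $(\bm u,x)$ becomes $\mathbf{J}^\Phi(\Phi_g(x))=\bm\Delta_g(\mathbf{J}^\Phi(x))$, and conversely, so the two equivariance conditions are equivalent. Next, for the level-set statement, since $\mathbf{J}^{\widetilde{\Phi}}(\bm u,x)=\mathbf{J}^\Phi(x)$, one has $(\bm u,x)\in\mathbf{J}^{\widetilde{\Phi}-1}(\bm\mu)$ iff $x\in\mathbf{J}^{\Phi-1}(\bm\mu)$, yielding the equality $\mathbf{J}^{\widetilde{\Phi}-1}(\bm\mu)=\mathbb{R}^k\times\mathbf{J}^{\Phi-1}(\bm\mu)$.

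For the regular value statements, I would compute the tangent map: under the canonical identification $\mathrm{T}_{(\bm u,x)}(\mathbb{R}^k\times M)\simeq\mathrm{T}_{\bm u}\mathbb{R}^k\oplus\mathrm{T}_xM$, one has $\mathrm{T}_{(\bm u,x)}\mathbf{J}^{\widetilde{\Phi}}=\mathrm{T}_x\mathbf{J}^\Phi\circ\mathrm{T}_{(\bm u,x)}\mathrm{pr}_M$, so $\mathrm{T}_{(\bm u,x)}\mathbf{J}^{\widetilde{\Phi}}$ is surjective iff $\mathrm{T}_x\mathbf{J}^\Phi$ is surjective, giving equivalence of regular values. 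For weak regular values, the same identification shows $\ker\mathrm{T}_{(\bm u,x)}\mathbf{J}^{\widetilde{\Phi}}=\mathrm{T}_{\bm u}\mathbb{R}^k\oplus\ker\mathrm{T}_x\mathbf{J}^\Phi$, which matches $\mathrm{T}_{(\bm u,x)}(\mathbb{R}^k\times\mathbf{J}^{\Phi-1}(\bm\mu))$ exactly when $\mathrm{T}_x(\mathbf{J}^{\Phi-1}(\bm\mu))=\ker\mathrm{T}_x\mathbf{J}^\Phi$.

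Finally, for the quotientability claim, the identity $\mathrm{pr}_M\circ\widetilde{\Phi}_g=\Phi_g\circ\mathrm{pr}_M$ together with the triviality of $\widetilde{\Phi}_g$ on the $\mathbb{R}^k$-factor shows that the $G^{\bm\Delta}_{\bm\mu}$-orbit of $(\bm u,x)$ in $\mathbf{J}^{\widetilde{\Phi}-1}(\bm\mu)$ equals $\{\bm u\}\times (G^{\bm\Delta}_{\bm\mu}\cdot x)$. Thus $\mathbf{J}^{\widetilde{\Phi}-1}(\bm\mu)/G^{\bm\Delta}_{\bm\mu}$ is set-theoretically $\mathbb{R}^k\times(\mathbf{J}^{\Phi-1}(\bm\mu)/G^{\bm\Delta}_{\bm\mu})$, and the canonical projection factors accordingly. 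From this, one is a manifold with the projection being a submersion iff the other is. The proof is essentially a bookkeeping exercise; I expect no genuine obstacle, the only mild care needed being the explicit description of the kernel of $\mathrm{T}\mathbf{J}^{\widetilde{\Phi}}$ to identify tangent spaces to the level sets for the weak regular value case.
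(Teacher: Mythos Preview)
Your proposal is correct and matches the paper's approach. The paper does not actually spell out a proof of this lemma: it states that the proofs of Lemmas \ref{Cor::PolyDecomposition} and \ref{Lemm::MomentumMapRegValue} are ``a straightforward consequence of previous facts and the relation $\mathrm{pr}_{\mathbb{R}^k}\circ\widetilde\Phi_g=\mathrm{pr}_{\mathbb{R}^k}$'', which is exactly the triviality of $\widetilde\Phi$ on the $\mathbb{R}^k$-factor that you exploit throughout via the companion identity $\mathrm{pr}_M\circ\widetilde\Phi_g=\Phi_g\circ\mathrm{pr}_M$ and the factorisation $\mathbf{J}^{\widetilde\Phi}=\mathbf{J}^\Phi\circ\mathrm{pr}_M$. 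Your write-up simply fills in the routine bookkeeping the paper omits.
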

The $k$-polysymplectic Marsden--Weinstein reduction Theorem \ref{Th::PolisymplecticReductionJ} provides the conditions  \eqref{Eq::PolysymplecticReduction1eq} and \eqref{Eq::PolysymplecticReduction2eq} to ensure the existence of a $k$-polysymplectic structure on  $\mathbf{J}^{\widetilde{\Phi}-1}(\bm\mu)/G^{\bm\Delta}_{\bm{\mu}}$. Note that Lemmas \ref{Lemm::MomentumMapRegValue}, \ref{Lem::Transcrip}, and \ref{Cor::PolyDecomposition} allow us to see that the $k$-polysymplectic fibred manifold induced by a $k$-polycosymplectic one has some required conditions to perform a $k$-polysymplectic reduction. Moreover, the $k$-polysymplectic Reeb vector fields on $\mathbb{R}^k\times M$ are tangent to ${\bf J}^{\widetilde{\Phi} -1}(\bm \mu)$, because $R_1,\ldots, R_k$ are tangent to ${\bf J}^{\Phi-1}(\bm \mu)$ and project onto the quotient manifold as they are invariant relative to the action of $\widetilde{\Phi}$. Note that Theorem 
\ref{Th::kpolycoreduction} analyses the previous result and ensures that  the reduced $k$-polysymplectic form, which is defined on a manifold of the form $\mathbf{J}^{\widetilde{\Phi}-1}(\bm\mu)/G^{\bm\Delta}_{\bm{\mu}}\simeq\mathbb{R}^k\times M^{\bm\Delta}_{\bm \mu}$, is fibred and so related to a $k$-polycosymplectic form on $M^{\bm\Delta}_{\bm \mu}$. This finishes the development of a $k$-polycosymplectic reduction manifold theory. The reduction of Hamiltonian systems will be dealt with after this.

\begin{lemma}\label{Lem::Transcrip}Let $(M,\bm{\tau},\bm{\omega})$ be a $k$-polycosymplectic manifold and let $(\R^k\times M,\widetilde{\bm{\omega}})$ be its associated $k$-polysymplectic fibred manifold. If 
\begin{equation}\label{Eq::PolycosymplecticRedEq1}
    \T_x(G^\Delta_{\bm{\mu}} x) = \bigcap^k_{\alpha=1}\left( (\ker\omega_x^\alpha\cap\ker\tau_x^\alpha) + \T_x (G_{\mu^\alpha}^{\Delta^\alpha
} x)\right)\cap  \T_x\mathbf{J}^{\Phi-1}(\bm\mu)
\end{equation}
and
\begin{equation}
\label{Eq::PolycosymplecticRedEq2}
    \ker \T_x\mathbf{J}^{\Phi}_\alpha = \ker\omega_x^\alpha\cap\ker\tau_x^\alpha + \T_x\mathbf{J}^{\Phi-1}(\bm\mu) + \T_x(G_{\mu^\alpha}^{\Delta^\alpha} x)
\end{equation}
for every $x\in {\bf J}^{\Phi-1}({\bf \mu})$, then expressions \eqref{Eq::PolysymplecticReduction1eq} and \eqref{Eq::PolysymplecticReduction2eq} concerning the extensions ${\bf J}^{\widetilde{\Phi}}$ and $\widetilde{\Phi}$ to $\R^k\times M$ of ${\bf J}^\Phi$ and $\Phi$, namely
\begin{equation}\label{Eq::PolysymplecticReduction1eq2}
        \ker (\T_p\mathbf{J}_\alpha^{\widetilde{\Phi}}) = \T_p(\mathbf{J}^{\widetilde{\Phi}-1}(\bm\mu))+\ker\widetilde{\om}^\alpha_p + \T_p(G^{\Delta^\alpha}_{\mu^\alpha} p)
    \end{equation}
and \begin{equation}\label{Eq::PolysymplecticReduction2eq2}
        \T_p(G_{\bm\mu}^{\bm\Delta} p)=\bigcap^k_{\alpha=1}\left(\ker\widetilde{\om}^\alpha_p+\T_p(G^{\Delta^\alpha}_{\mu^\alpha}p)\right)\cap \T_p(\mathbf{J}^{\widetilde{\Phi}-1}(\bm\mu))\,,
    \end{equation}
    for every $p=({\bf u},x)\in {\bf J}^{\widetilde{\Phi}-1}({\bm \mu})$ and all $\alpha=1,\ldots,k$, are satisfied.
\end{lemma}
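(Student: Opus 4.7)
The plan is to verify \eqref{Eq::PolysymplecticReduction1eq2} and \eqref{Eq::PolysymplecticReduction2eq2} by computing both sides under the natural splitting $\T_p(\R^k\times M)\simeq \T_{\bm u}\R^k\oplus\T_x M$ at $p=(\bm u,x)$, and recognising that the resulting identities along the $\T_x M$-factor are exactly \eqref{Eq::PolycosymplecticRedEq1} and \eqref{Eq::PolycosymplecticRedEq2}, while the $\T_{\bm u}\R^k$-factor works out automatically.

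The main technical ingredient, and the step where I expect the only real work, is a clean description of $\ker\widetilde\omega^\alpha_p$ in terms of the data on $M$. Specifically, I would prove that, with no sum on $\alpha$,
\[
\ker\widetilde\omega^\alpha_p \;=\; \Big(\bigoplus_{\beta\neq\alpha}\langle \partial/\partial u^\beta|_{\bm u}\rangle\Big)\oplus\big(\ker\omega^\alpha_x\cap\ker\tau^\alpha_x\big)\,.
\]
To establish this, I write a generic tangent vector at $p$ as $(v,w)$ with $v=f^\beta\partial/\partial u^\beta$, use the formula $\widetilde\omega^\alpha={\rm pr}_M^*\omega^\alpha+\d u^\alpha\wedge{\rm pr}_M^*\tau^\alpha$, and split the equation $\iota_{(v,w)}\widetilde\omega^\alpha_p=0$ into its $\d u^\alpha$- and ${\rm pr}_M^*$-components, which yields $\tau^\alpha_x(w)=0$ and $\iota_w\omega^\alpha_x=-f^\alpha\tau^\alpha_x$. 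Contracting the second identity with the Reeb vector field $R_\alpha$ of $(M,\bm\tau,\bm\omega)$ and using $\iota_{R_\alpha}\omega^\alpha=0$ together with $\iota_{R_\alpha}\tau^\alpha=1$ forces $f^\alpha=0$, whence $w\in\ker\omega^\alpha_x\cap\ker\tau^\alpha_x$, while the coordinates $f^\beta$ for $\beta\neq\alpha$ remain free. This Reeb-based step is the only non-routine obstacle.

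Next, since $\widetilde\Phi_g(\bm u,x)=(\bm u,\Phi_g(x))$, the fundamental vector fields of $\widetilde\Phi$ are tangent to $\{\bm u\}\times M$, so $\T_p(Hp)=\{0\}\oplus\T_x(Hx)$ for $H=G$, $G^{\Delta^\alpha}_{\mu^\alpha}$ and $G^{\bm\Delta}_{\bm\mu}$. Moreover, from $\mathbf{J}^{\widetilde\Phi}=\mathbf{J}^\Phi\circ{\rm pr}_M$ and the equivalence of weak regularity given by Lemma \ref{Lemm::MomentumMapRegValue}, one has $\T_p(\mathbf{J}^{\widetilde\Phi-1}(\bm\mu))=\T_{\bm u}\R^k\oplus\T_x(\mathbf{J}^{\Phi-1}(\bm\mu))$ and $\ker\T_p\mathbf{J}^{\widetilde\Phi}_\alpha=\T_{\bm u}\R^k\oplus\ker\T_x\mathbf{J}^\Phi_\alpha$. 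Substituting these descriptions into the right-hand side of \eqref{Eq::PolysymplecticReduction1eq2}, the $\T_{\bm u}\R^k$-component trivially equals $\T_{\bm u}\R^k$, matching the left-hand side, while the $\T_x M$-component becomes $\T_x(\mathbf{J}^{\Phi-1}(\bm\mu))+(\ker\omega^\alpha_x\cap\ker\tau^\alpha_x)+\T_x(G^{\Delta^\alpha}_{\mu^\alpha}x)$, which equals $\ker\T_x\mathbf{J}^\Phi_\alpha$ by hypothesis \eqref{Eq::PolycosymplecticRedEq2}.

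Finally, for \eqref{Eq::PolysymplecticReduction2eq2}, I observe that $\bigcap_{\alpha=1}^k\bigoplus_{\beta\neq\alpha}\langle \partial/\partial u^\beta\rangle=\{0\}$, so the $\T_{\bm u}\R^k$-component of $\bigcap_\alpha(\ker\widetilde\omega^\alpha_p+\T_p(G^{\Delta^\alpha}_{\mu^\alpha}p))$ collapses to $\{0\}$, matching the $\T_{\bm u}\R^k$-component of $\T_p(G^{\bm\Delta}_{\bm\mu}p)$. Intersecting with $\T_p(\mathbf{J}^{\widetilde\Phi-1}(\bm\mu))$ then leaves, on the $\T_x M$-factor, exactly $\bigcap_\alpha\big((\ker\omega^\alpha_x\cap\ker\tau^\alpha_x)+\T_x(G^{\Delta^\alpha}_{\mu^\alpha}x)\big)\cap\T_x(\mathbf{J}^{\Phi-1}(\bm\mu))$, which coincides with $\T_x(G^{\bm\Delta}_{\bm\mu}x)$ by \eqref{Eq::PolycosymplecticRedEq1}. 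This reduces both $k$-polysymplectic identities to the two $k$-polycosymplectic hypotheses and concludes the proof.
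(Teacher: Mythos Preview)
Your proposal is correct and follows essentially the same route as the paper: both arguments split $\T_p(\R^k\times M)\simeq \T_{\bm u}\R^k\oplus\T_xM$, compute $\ker\widetilde\omega^\alpha_p=A^\alpha_{\bm u}\oplus(\ker\omega^\alpha_x\cap\ker\tau^\alpha_x)$ with $A^\alpha_{\bm u}=\bigoplus_{\beta\neq\alpha}\langle\partial/\partial u^\beta\rangle$, identify the orbit and level-set tangent spaces in the split picture, and then reduce \eqref{Eq::PolysymplecticReduction1eq2}--\eqref{Eq::PolysymplecticReduction2eq2} to \eqref{Eq::PolycosymplecticRedEq1}--\eqref{Eq::PolycosymplecticRedEq2}. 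The only cosmetic difference is that the paper obtains the kernel formula by contracting $\widetilde\omega^\alpha$ with $\partial/\partial u^\alpha$ and the extended Reeb fields $\widetilde R_\alpha$ to get $\ker\widetilde\omega^\alpha=\ker{\rm pr}_M^*\omega^\alpha\cap\ker\d u^\alpha\cap\ker{\rm pr}_M^*\tau^\alpha$, whereas you expand $\iota_{(v,w)}\widetilde\omega^\alpha=0$ directly and contract with $R_\alpha$ on $M$; these are equivalent computations.
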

\begin{proof}
Given the canonical projection ${\rm pr}_M:\R^k\x M\rightarrow M$ and the natural isomorphisms $\T_{(\bm{u},x)} (\mathbb{R}^k\times M)\simeq \T_{\bm u}\mathbb{R}^k\oplus \T_xM$ for every $({\bm u},x)\in \mathbb{R}^k\times M$, it follows that, for $\alpha=1,\ldots,k$, one has that
\begin{equation}\label{Eq::PolyCospaces1}
\begin{gathered}
    (\ker{\rm pr}_M^*\omega^\alpha)_{(\bm{u},x)} = \T_{\bm{u}}\mathbb{R}^k\oplus \ker\omega^\alpha_x\,,\qquad
    (\ker{\rm pr}_M^*\tau^\alpha)_{(\bm{u},x)} = \T_{\bm{u}}\mathbb{R}^k\oplus \ker\tau^\alpha_x\,,\\
    (\ker \d u^\alpha)_{(\bm{u},x)} = A_{\bm{u}}^\alpha\oplus \T_xM\,,
    \end{gathered}
\end{equation}
where $A_{\bm{u}}^\alpha = \T_{\bm{u}}\mathbb{R}^k\cap (\ker \d u^\alpha)_{(\bm{u},x)}$ and $\ker\bm\omega_x,\ker\bm\tau_x\subset \T_xM$  for every $({\bf u},x)\in \mathbb{R}^k\times M$. The contraction of $\widetilde{\omega}^\alpha$ with $\partial/\partial u^\alpha$ and the extended Reeb vector fields $\widetilde{R}_1,\ldots,\widetilde{R}_k$ on $\mathbb{R}^k\times M$ give, along with \eqref{Eq::PolyCospaces1}, that
\begin{multline}
\label{Eq::keromegatilde}
(\ker \widetilde{\omega}^\alpha)_{(\bm{u},x)}=(\ker{\rm pr}^*_M\omega^\alpha\cap \ker \d u^\alpha\cap\ker{\rm pr}^*_M\tau^\alpha)_{(\bm{u},x)}\\=\left(\T_{\bm{u}}\R^k\oplus \ker\omega^\alpha_x \right)\cap\left( A^\alpha_{\bm{u}}\oplus \T_xM\right)\cap\left( \T_{\bm{u}}\R^k\oplus \ker\tau^\alpha_x \right)=A^\alpha_{\bm{u}}\oplus\left(\ker\omega^\alpha_x\cap\ker\tau^\alpha_x\right),
\end{multline}
for every $({\bm u},x)\in \mathbb{R}^k\times M$.
Moreover, it follows from the extension formulas \eqref{Eq::ExtendedLieActionPoly} and \eqref{Eq::ExtendedMomentumMapPoly} that $\Lie_{\frac{\partial}{\partial u^\alpha}}\mathbf{J}^{\widetilde{\Phi}}=0$ and $\inn_{\xi_{\R^k\x M}}\d \bm u=0$ hold for every $\xi\in\Lg$ and $\alpha=1,\ldots,k$. Hence, 
\begin{equation}\label{Eq::PolyCospaces2}
\begin{gathered}
    \T_{({\bm{u}},x)}(G_{\bm{\mu}}^{\bm \Delta} ({\bm{u}},x)) = \T_x(G^{\bm \Delta}_{\bm{\mu}} x)\,,\qquad\T_{({\bm{u}},x)}\mathbf{J}^{\widetilde{\Phi}-1}({\bm{\mu}}) = \T_{\bm{u}}\R^k\oplus\T_x (\mathbf{J}^{\Phi-1}({\bm{\mu}}))\,,\,\\
    \T_{({\bm{u}},x)}(G_{\mu^\alpha}^{\Delta^\alpha}({\bm{u}},x)) = \T_x(G^{\Delta^\alpha}_{\mu^\alpha}x)\,,\qquad
\ker(\T_{({\bm{u}},x)}\mathbf{J}^{\widetilde{\Phi}}_\alpha) = \T_{\bm{u}}\R^k\oplus \ker \T_x\mathbf{J}^\Phi_\alpha\,,
\end{gathered}
\end{equation}
for $\alpha=1,\ldots,k$ and arbitrary ${\bm u}\in \mathbb{R}^k$ and points $x\in {\bf J}^{\Phi^{-1}}({\bm \mu})$. Then, condition \eqref{Eq::PolysymplecticReduction2eq} for our $k$-polysymplectic fibred manifold can be written as follows
\begin{multline}
\label{Eq::PolyRedEq1}
     \T_x(G^{\bm \Delta}_{\bm{\mu}} x) = \T_{({\bm{u}},x)}\left(G_{\bm{\mu}}^{\bm \Delta} ({\bm{u}},x) \right) = \bigcap^k_{\alpha=1}\left( (\ker\widetilde{\omega}^\alpha)_{({\bm{u}},x)} + \T_x\left(G_{\mu^\alpha}^{\Delta^\alpha}x\right)\right)\cap \T_{({\bm{u}},x)}\mathbf{J}^{\widetilde{\Phi}-1}({\bm{\mu}})\\
    =\bigcap^k_{\alpha=1}\left(A^\alpha_{\bm{u}}\oplus\left(\ker\omega_x^\alpha\cap\ker\tau_x^\alpha\right) + \T_x (G_{\mu^\alpha}^{\Delta^\alpha} x)\right)\cap \left(\T_{\bm{u}}\R^k\oplus \T_x\mathbf{J}^{\Phi-1}(\bm{\mu})\right)\\
    = \bigcap^k_{\alpha=1}\left[ (\ker\omega_x^\alpha\oplus\ker\tau_x^\alpha) + \T_x (G_{\mu^\alpha}^{\Delta^\alpha} x)\right]\cap  \T_x\mathbf{J}^{\Phi-1}({\bm{\mu}})\,,
\end{multline}
and \eqref{Eq::PolysymplecticReduction1eq} amounts to
\begin{multline}
\label{Eq::PolyRedEq2}
    \T_{\bm{u}}\R^k\oplus\ker \T_x\mathbf{J}^{\Phi}_\alpha=\ker \T_{({\bm{u}},x)}\mathbf{J}^{\widetilde{\Phi}}_\alpha = \T_{({\bm{u}},x)}(\mathbf{J}^{\widetilde{\Phi}-1}(\bm\mu)) + \ker\widetilde{\omega}_{({\bm{u}},x)}^\alpha + \T_{({\bm{u}},x)}\left( G^{\Delta^\alpha}_{\mu^\alpha}({\bm{u}},x)\right)\\
    = \T_{\bm{u}}\R^k\oplus \T_x\mathbf{J}^{\Phi-1}(\bm\mu) + A^\alpha_{\bm{u}}\oplus\left(\ker\omega_x^\alpha\cap\ker\tau_x^\alpha\right) +  \T_x\left(G_{\mu^\alpha}^{\Delta^\alpha} x\right)\\
=\T_{\bm{u}}\R^k\oplus\left(\ker\omega_x^\alpha\cap\ker\tau_x^\alpha + \T_x\mathbf{J}^{\Phi-1}(\bm\mu) + \T_x(G_{\mu^\alpha}^{\Delta^\alpha} x) \right)\,,
\end{multline}
where we have used \eqref{Eq::PolyCospaces1}, \eqref{Eq::keromegatilde}, and \eqref{Eq::PolyCospaces2}, for every $({\bm{u}},x)\in \R^k\x {\bf J}^{\Phi-1}({\bm \mu})$, every $\bm{\mu}\in(\Lg^*)^k$, and $\alpha=1,\ldots,k$.
Thus, \eqref{Eq::PolyRedEq1} and \eqref{Eq::PolyRedEq2} amount to the conditions \eqref{Eq::PolycosymplecticRedEq1} and \eqref{Eq::PolycosymplecticRedEq2}. This finishes the proof. 
\end{proof}

Note that, in view of Lemma \ref{Lem::Transcrip}, if \eqref{Eq::PolycosymplecticRedEq1} and \eqref{Eq::PolycosymplecticRedEq2} hold, Theorem \ref{Th::PolisymplecticReductionJ} yields that a $k$-polycosymplectic manifold $(M,{\bm \tau},{\bm \omega})$ ensures that its associated $k$-polysymplectic fibred manifold $(\mathbb{R}^k\times M,\widetilde{\bm \omega})$ gives rise to an induced reduced $k$-polysymplectic manifold $(\mathbf{J}^{\widetilde{\Phi}-1}(\bm\mu)/G^{\bm\Delta}_{\bm\mu},\widetilde{\bm\omega}_{\bm\mu})$. It is left to prove that this latter one amounts via Theorem \ref{Prop::PolyCosymSymEqui} to a $k$-polycosymplectic  Marsden--Weinstein reduction $(\mathbf{J}^{\Phi-1}(\bm\mu)/G^{\bm\Delta}_{\bm\mu},\bm\tau_{\bm\mu},\bm\omega_{\bm\mu})$ of our initial $k$-polycosymplectic manifold $(M,{\bm \tau},{\bm \omega})$.

\begin{theorem}
\label{Th::kpolycoreduction}
Let $(M,\bm\tau,\bm\om)$ be a $k$-polycosymplectic manifold and let ${\bf J}^{\Phi}:M\rightarrow (\mathfrak{g}^{*})^k$ be a $k$-polycosymplectic momentum map associated with a $k$-polycosymplectic Lie group action $\Phi:G\times M\rightarrow M$. Let $\bm\mu\in (\Lg^*)^k$ be a weak regular value of $\mathbf{J}^\Phi$ and let $\mathbf{J}^{\Phi-1}(\bm \mu)$ be quotientable by $G^{\bm\Delta}_{\bm\mu}$. Moreover, assume that
    
    \begin{equation}\label{Eq::PolycosymplecticReductionTh1}
        \T_x(G_{\boldsymbol{\mu}}^{\boldsymbol{\Delta}} x)=\bigcap^k_{\alpha=1}\left(\ker\om^\alpha_x\cap\ker\tau^\alpha_x+\T_x(G^{\Delta_\alpha}_{\mu^\alpha}x)\right)\cap \T_x\mathbf{J}^{\Phi-1}(\boldsymbol{\mu})\,,
    \end{equation}
    and 
    \begin{equation}
\label{Eq::PolycosymplecticReductionTh2}
    \ker \T_x\mathbf{J}^{\Phi}_\alpha = \ker\omega_x^\alpha\cap\ker\tau_x^\alpha + \T_x\mathbf{J}^{\Phi-1}(\boldsymbol{\mu}) + \T_x(G_{\mu^\alpha}^{\Delta_\alpha} x)\,,
\end{equation}
    for every $x\in {\bf J}^{\Phi-1}(\bm \mu)$ and  $\alpha=1,\ldots,k$. Then, $(\mathbf{J}^{\Phi-1}(\bm\mu)/G^{\boldsymbol{\Delta}}_{\bm\mu},\bm\tau_{\bm\mu},\bm\omega_{\bm\mu})$ is a $k$-polycosymplectic manifold such that ${\bm \tau_{\bm\mu}}$ and ${\bm \omega_{\bm\mu}}$ are defined univocally by
    \[
        \pi_{\bm\mu}^*\bm\tau_{\bm \mu}=\jmath_{\bm\mu}^*\bm\tau\,,\qquad \pi^*_{\bm \mu}\bm\om_{\bm\mu}=\jmath_{\bm\mu}^*\bm\om\,,
    \]
    where $\jmath_{\bm\mu}:\B J^{\Phi-1}(\bm\mu)\hookrightarrow M $ is the natural immersion and $\pi_{\bm\mu}:\mathbf{J}^{\Phi-1}(\bm\mu)\rightarrow \mathbf{J}^{\Phi-1}(\bm\mu)/G^{\bm\Delta}_{\bm\mu}$ is the canonical projection.
\end{theorem}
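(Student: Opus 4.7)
The plan is to reduce the statement to the already-established $k$-polysymplectic Marsden--Weinstein reduction theorem (Theorem \ref{Th::PolisymplecticReductionJ}) applied to the associated $k$-polysymplectic fibred manifold $(\mathbb{R}^k\times M,\widetilde{\bm\omega})$ of Theorem \ref{Prop::PolyCosymSymEqui}, and then to use Theorem \ref{Prop::PolyCosymSymEqui} in reverse to descend the reduced $k$-polysymplectic structure onto a $k$-polycosymplectic one on $M^{\bm\Delta}_{\bm\mu}:=\mathbf{J}^{\Phi-1}(\bm\mu)/G^{\bm\Delta}_{\bm\mu}$. First I would extend the data: $\Phi$ extends to $\widetilde\Phi$ and $\mathbf{J}^\Phi$ to $\mathbf{J}^{\widetilde\Phi}$ by \eqref{Eq::ExtendedLieActionPoly} and \eqref{Eq::ExtendedMomentumMapPoly}, and $\mathbf{J}^{\widetilde\Phi}$ is a $k$-polysymplectic momentum map for $\widetilde\Phi$ relative to $(\mathbb{R}^k\times M,\widetilde{\bm\omega})$. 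By Lemma \ref{Lemm::MomentumMapRegValue}, $\bm\mu$ is a weak regular value of $\mathbf{J}^{\widetilde\Phi}$ and $\mathbf{J}^{\widetilde\Phi-1}(\bm\mu)\simeq\mathbb{R}^k\times\mathbf{J}^{\Phi-1}(\bm\mu)$ is quotientable by $G^{\bm\Delta}_{\bm\mu}$. By Lemma \ref{Lem::Transcrip}, the hypotheses \eqref{Eq::PolycosymplecticReductionTh1}--\eqref{Eq::PolycosymplecticReductionTh2} are equivalent to the hypotheses \eqref{Eq::PolysymplecticReduction1eq2}--\eqref{Eq::PolysymplecticReduction2eq2} needed to invoke Theorem \ref{Th::PolisymplecticReductionJ} for $(\mathbb{R}^k\times M,\widetilde{\bm\omega})$.

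Applying Theorem \ref{Th::PolisymplecticReductionJ} yields a reduced $k$-polysymplectic manifold $(\widetilde M^{\bm\Delta}_{\bm\mu},\widetilde{\bm\omega}_{\bm\mu})$, where by Lemma \ref{Cor::PolyDecomposition} one has a natural identification $\widetilde M^{\bm\Delta}_{\bm\mu}=\mathbf{J}^{\widetilde\Phi-1}(\bm\mu)/G^{\bm\Delta}_{\bm\mu}\simeq\mathbb{R}^k\times M^{\bm\Delta}_{\bm\mu}$, and $\widetilde{\bm\omega}_{\bm\mu}$ is characterised by $\widetilde\pi_{\bm\mu}^*\widetilde{\bm\omega}_{\bm\mu}=\widetilde\jmath_{\bm\mu}^{\;*}\widetilde{\bm\omega}$. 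Since $\widetilde\Phi$ acts trivially on the $\mathbb{R}^k$ factor, each coordinate $u^\alpha$ on $\mathbb{R}^k$ descends to $\widetilde M^{\bm\Delta}_{\bm\mu}$, as do the vector fields $\partial/\partial u^\alpha$. Moreover, the extended Reeb vector fields $\widetilde R_1,\ldots,\widetilde R_k$ on $\mathbb{R}^k\times M$ are $\widetilde\Phi$-invariant and tangent to $\mathbf{J}^{\widetilde\Phi-1}(\bm\mu)$, because their projections $R_\alpha$ satisfy $R_\alpha J^\Phi_\xi=0$ (last condition of \eqref{Eq::polycosymMomentuMap}); hence they descend to vector fields $\widetilde R_{\bm\mu,\alpha}$ on $\widetilde M^{\bm\Delta}_{\bm\mu}$ satisfying $\iota_{\widetilde R_{\bm\mu,\alpha}}\widetilde\omega_{\bm\mu}^{\,\beta}=-\delta^\beta_\alpha\d u^\alpha$ and $\widetilde R_{\bm\mu,\alpha}u^\beta=0$. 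Thus $(\widetilde M^{\bm\Delta}_{\bm\mu},\widetilde{\bm\omega}_{\bm\mu})$ is itself a $k$-polysymplectic fibred manifold.

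Applying Theorem \ref{Prop::PolyCosymSymEqui} in reverse to $(\widetilde M^{\bm\Delta}_{\bm\mu},\widetilde{\bm\omega}_{\bm\mu})$ produces a unique $k$-polycosymplectic structure $(\bm\tau_{\bm\mu},\bm\omega_{\bm\mu})$ on $M^{\bm\Delta}_{\bm\mu}$ such that $\widetilde{\bm\omega}_{\bm\mu}=\mathrm{pr}^*_{M^{\bm\Delta}_{\bm\mu}}\bm\omega_{\bm\mu}+\d\bm u\barwedge\mathrm{pr}^*_{M^{\bm\Delta}_{\bm\mu}}\bm\tau_{\bm\mu}$, defined concretely by $\bm\omega_{\bm\mu}=\iota_{\bm u}^*\widetilde{\bm\omega}_{\bm\mu}$ and $\bm\tau_{\bm\mu}=\iota_{\bm u}^*\bigl(\iota_{\partial/\partial u^\alpha}\widetilde\omega_{\bm\mu}^{\,\alpha}\bigr)$ for any section $\iota_{\bm u}\colon M^{\bm\Delta}_{\bm\mu}\hookrightarrow\mathbb{R}^k\times M^{\bm\Delta}_{\bm\mu}$; independence of $\iota_{\bm u}$ follows from the $\mathbb{R}^k$-translation invariance of $\widetilde{\bm\omega}_{\bm\mu}$, exactly as in Section \ref{Sec:CosymSym}. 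It remains to verify the defining equalities $\pi_{\bm\mu}^*\bm\tau_{\bm\mu}=\jmath_{\bm\mu}^*\bm\tau$ and $\pi_{\bm\mu}^*\bm\omega_{\bm\mu}=\jmath_{\bm\mu}^*\bm\omega$, which follow by pulling back $\widetilde\pi_{\bm\mu}^*\widetilde{\bm\omega}_{\bm\mu}=\widetilde\jmath_{\bm\mu}^{\;*}\widetilde{\bm\omega}$ along $\iota_{\bm u}$ and decomposing into $\mathrm{pr}_M$-horizontal and $\d\bm u$-components; uniqueness of $(\bm\tau_{\bm\mu},\bm\omega_{\bm\mu})$ is immediate since $\pi_{\bm\mu}$ is a surjective submersion.

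The main obstacle I anticipate is the last step: carefully verifying that the reduced $k$-polysymplectic form $\widetilde{\bm\omega}_{\bm\mu}$ genuinely splits as $\mathrm{pr}^*\bm\omega_{\bm\mu}+\d\bm u\barwedge\mathrm{pr}^*\bm\tau_{\bm\mu}$ on $\mathbb{R}^k\times M^{\bm\Delta}_{\bm\mu}$ with $\bm\omega_{\bm\mu}$ and $\bm\tau_{\bm\mu}$ independent of $\bm u$, and that the descended Reeb fields $\widetilde R_{\bm\mu,\alpha}$ really satisfy the conditions of Theorem \ref{Prop::PolyCosymSymEqui}. This hinges on showing that translations $\partial/\partial u^\alpha$ preserve $\widetilde{\bm\omega}_{\bm\mu}$, which itself follows from their invariance on $(\mathbb{R}^k\times M,\widetilde{\bm\omega})$ together with their commutation with both $\widetilde\jmath_{\bm\mu}$ and $\widetilde\pi_{\bm\mu}$; once this is secured, the rest of the argument is bookkeeping.
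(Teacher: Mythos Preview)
Your proposal is correct and follows essentially the same route as the paper: extend to the $k$-polysymplectic fibred manifold, transfer the hypotheses via Lemmas \ref{Lemm::MomentumMapRegValue} and \ref{Lem::Transcrip}, apply Theorem \ref{Th::PolisymplecticReductionJ}, verify that the reduced $k$-polysymplectic manifold is again fibred by descending the extended Reeb fields and the $\partial/\partial u^\alpha$, and finally invoke Theorem \ref{Prop::PolyCosymSymEqui} in reverse and check the defining equalities by pulling back along $\iota_{\bm u}$. The one technical point you flag as the main obstacle---showing $\widetilde{\bm\omega}_{\bm\mu}$ splits as $\mathrm{pr}^*\bm\omega_{\bm\mu}+\d\bm u\barwedge\mathrm{pr}^*\bm\tau_{\bm\mu}$---is precisely where the paper also spends its effort (equation \eqref{eq:relll}), so your anticipation is on target; just note that Lemma \ref{Lem::Transcrip} gives only the implication you need (polycosymplectic conditions $\Rightarrow$ polysymplectic conditions), not the full equivalence you stated.
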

\begin{proof} 
Consider, as standard, the $k$-polysymplectic manifold $(\mathbb{R}^k\times M,\widetilde{\bm\omega})$, the extended action $\widetilde{\Phi}:G\times \R^k\times M\rightarrow \R^k\x M$ and its associated $k$-polysymplectic momentum map ${\bf{J}}^{\widetilde{\Phi}}:\R^k\times M\rightarrow (\mathfrak{g}^*)^k$. We write $\{u^1,\ldots,u^k\}$ for a standard coordinate system on $\mathbb{R}^k$ that gives rise, in the standard way, to $k$ coordinates on  $\R^k\times M$ that will be denoted in the same manner. According to Lemma \ref{Lemm::MomentumMapRegValue}, if ${\bm \mu}$ is a weak regular value for $\mathbf{J}^{\widetilde{\Phi}}$, then ${\bm \mu}$  is also a weak regular value for $\mathbf{J}^\Phi$. Additionally, ${\bf{ J}}^{\widetilde{\Phi}-1}(\bm \mu)$ is quotientable by the restriction of $\widetilde{\Phi}$ to $G^{\bm\Delta}_{\bm\mu}$ if and only if ${\bf J}^{\Phi-1}(\bm \mu)$ is so relative to the restriction of the Lie group action $\Phi$ to $G^{\bm \Delta}_{\bm \mu}$.

Lemma \ref{Lem::Transcrip} ensures that the conditions \eqref{Eq::PolycosymplecticReductionTh1} and \eqref{Eq::PolycosymplecticReductionTh2} imply that the conditions \eqref{Eq::PolysymplecticReduction1eq}  and \eqref{Eq::PolysymplecticReduction2eq} for the $k$-polysymplectic Marsden--Weinstein reduction on ${\bf J}^{\widetilde{\Phi}-1}({\bm \mu})$ hold. Hence, it is possible to accomplish a $k$-polysymplectic Marsden--Weinstein reduction on $\R^k\x {\bf J}^{\Phi-1}(\bm \mu)$. Let us now prove that the resulting  reduced $k$-polysymplectic manifold is a fibred one.

The $k$-polysymplectic manifold $(\R^k\times M,\widetilde{\bm \omega})$ admits, by the assumptions of the present theorem and Theorem \ref{Prop::PolyCosymSymEqui}, some $k$-polysymplectic Reeb vector fields $\widetilde R_1,\ldots, \widetilde{R}_k$ that are tangent to ${\bf J}^{\widetilde{\Phi}-1}(\bm \mu)$ because the Reeb vector fields $R_1,\ldots, R_k$ for $(M,\bm\tau,\bm\omega)$ are tangent to $\mathbf{J}^{\Phi-1}(\bm\mu)$ and ${\rm pr}_{M*}\widetilde{R}_\alpha=R_\alpha$ for $\alpha=1,\ldots,k$. The Reeb vector fields $R_1,\ldots,R_k$ are invariant under the action of $G^{\bm\Delta}_{\bm \mu}$ via $\Phi$. Therefore, the extensions $\widetilde{R}_1,\ldots,\widetilde{R}_k$, which also satisfy $\widetilde{R}_\alpha u^\beta=0$ for $\alpha,\beta=1,\ldots,k$, are also invariant relative to the action of $G^{\bm\Delta}_{\bm \mu}$ via $\widetilde{\Phi}$. Then, the projections of the restrictions of $\widetilde {R}_1,\ldots,\widetilde{R}_k$ to $\mathbf{J}^{\Phi-1}({\bm \mu})$ onto $\mathbf{J}^{\Phi-1}(\bm\mu)/G^{\bm\Delta}_{\bm\mu}$ are
$k$-polysymplectic Reeb vector fields $\widetilde {R}_{1\bm\mu},\ldots,\widetilde{R}_{k\bm\mu}$ on the reduced $k$-polysymplectic manifold $(\mathbf{J}^{\widetilde{\Phi}-1}(\bm\mu)/G^{\bm\Delta}_{\bm \mu},\widetilde{\bm \omega}_{\bm \mu})$.

Furthermore, the vector fields $\partial/\partial u^1,\ldots,\partial/\partial u^k$ project onto $\mathbf{J}^{\widetilde{\Phi}-1}({\bm \mu})/G_{\bm \mu}^{\bm \Delta}$, which is diffeomorphic to $ \R^k\x( \mathbf{J}^{\Phi-1}(\bm\mu)/G^{\bm\Delta}_{\bm\mu})$ by Lemma \ref{Cor::PolyDecomposition}, and their projections are linearly independent. In fact, the contractions $\iota_{\partial/\partial u^\beta}\iota_{\widetilde{R}_\alpha}\widetilde{\bm \omega}$ are projectable from $\mathbf{J}^{\widetilde{\Phi}-1}(\bm\mu)$ onto  $\mathbf{J}^{\widetilde{\Phi}-1}({\bm \mu})/G_{\bm \mu}^{\bm \Delta}$ and proportional, up to a non-zero constant, to $\delta^\beta_\alpha$.

Now, let us show how the reduced $k$-polysymplectic manifold $\left(\widetilde{M^{\bm \Delta}_{\bm \mu}}=\mathbf{J}^{\widetilde{\Phi}-1}(\bm\mu)/G^{\bm\Delta}_{\bm \mu},\widetilde{\bm \omega}_{\bm \mu}\right)$ gives rise to a $k$-polycosymplectic structure on $M^{\bm \Delta}_{\bm \mu}={\bf J}^{\Phi-1}({\bm \mu})/G^{\bm\Delta}_{\bm \mu}$. Consider the embedding $\inn_{\bm u}:x\in \mathbf{J}^{\Phi-1}(\bm{\mu})\ni x\mapsto (\bm{u},x)\in\R^k\x\mathbf{J}^{\Phi-1}(\bm{\mu})$ for any $\bm{u}\in \mathbb{R}^k$. Using Lemma \ref{Cor::PolyDecomposition}, we can define a reduced $k$-polycosymplectic structure on $M^{\bm \Delta}_{\bm \mu}$ via  $\widetilde{\bm\omega}_{\bm\mu}$ as follows
\begin{equation}
\label{Eq::retrievedpolycosymstructure}
{\bm \omega}_{\bm \mu}=\inn_{\bm u}^*{\widetilde{\bm \omega}}_{\bm \mu}\,,\quad \bm{\tau}_{\bm \mu}=\sum_{\alpha=1}^k\inn_{\bm u}^*\left(\inn_{\frac{\partial}{\partial u^\alpha}}{\widetilde{\bm \omega}}_{\bm \mu}\right)\,.
\end{equation}
Since $\widetilde{\bm\omega}_{\bm\mu}$ is closed and $\Lie_{\partial/\partial u^\alpha}\widetilde{\bm\omega}_{\bm \mu}=0$ for $\alpha=1,\ldots,k$, 
it follows that $\bm\omega_\mu$ and $\bm\tau_\mu$ are closed forms. Let ${\rm pr}_{M^{\bm\Delta}_{\bm \mu}}:\R^k\times M^{\bm\Delta}_{\bm \mu}\mapsto M^{\bm\Delta}_{\bm \mu}$ and $\widetilde{\pi}_{\bm \mu}:{\bf J}^{\widetilde{\Phi}-1}({\bm \mu})\rightarrow \widetilde{M^{\bm \Delta}_{\bm \mu}}$ be the canonical projections.
Then, the reduced $k$-polysymplectic form can be expressed as $\widetilde{\bm\omega}_{\bm \mu}={\rm pr}_{M^{\bm\Delta}_{\bm \mu}}^*{\bm\omega}_{\bm\mu}+\d{\bm u}\barwedge {\rm pr}_{M^{\bm\Delta}_{\bm \mu}}^*{\bm\tau}_{\bm \mu}$. Indeed, this expression satisfies previous relations with $\bm \omega_{\bm \mu}$ and ${\bm \tau}_{\bm \mu}$, and, more importantly, 
\begin{equation}\label{eq:relll}
\widetilde{\pi}_{\bm \mu}^*({\rm pr}_{M^{\bm\Delta}_{\bm \mu}}^*{\bm\omega}_{\bm\mu}+\d{\bm u}\barwedge {\rm pr}_{M^{\bm\Delta}_{\bm \mu}}^*{\bm\tau}_{\bm\mu})=\widetilde{\jmath}^*_{\bm \mu}\widetilde{\bm \omega}\,,
\end{equation}
 which determines univocally the $k$-polycosymplectic reduced structure on $M^{\bm \Delta}_{\bm \mu}$. To prove \eqref{eq:relll}, note that both sides vanish on pairs of tangent vectors belonging to $\T_{\bm u}\mathbb{R}^k$ understood as a subspace of $\T_{({\bm u},x)}{\bf J}^{\widetilde{\Phi}-1}({\bm \mu})\simeq \T_{\bm u}\mathbb{R}^k\oplus \T_{x}({\bf J}^{\Phi-1}({\bm \mu}))$. Moreover, due to the first expression in \eqref{Eq::retrievedpolycosymstructure} both sides of equality \eqref{eq:relll} take the same values on pairs of tangent vectors of the space $\T_x{\bf J}^{\Phi-1}(\bm \mu)$. Finally, given two tangent vectors belonging to $\T_{\bm u}\mathbb{R}^k$ and $\T_x{\bf J}^{\Phi-1}(\bm \mu)$, respectively, a short calculation shows that both sides also match, which, along with previous facts, yield that \eqref{eq:relll} holds. 

Since $(\R^k\x{\bf J}^{\Phi-1}(\bm\mu)/G^{\bm\Delta}_{\bm\mu},{\rm pr}_{M^{\bm\Delta}_{\bm \mu}}^*{\bm\omega}_{\bm\mu}+\d{\bm u}\barwedge {\rm pr}_{M^{\bm\Delta}_{\bm \mu}}^*{\bm\tau}_{\bm\mu})$ is a $k$-polysymplectic manifold by Theorem \ref{Th::PolisymplecticReductionJ} and it admits $k$-polysymplectic Reeb vector fields, then   $(\widetilde{M}^{\bm \Delta}_{\bm \mu},\widetilde{\bm \omega}_{\bm \mu})$ is a $k$-polysymplectic fibred manifold and Theorem \ref{Prop::PolyCosymSymEqui} gives that $(M^{\bf \Delta}_{\bm \mu},{\bm \tau}_{\bm \mu},{\bm \omega}_{\bm \mu})$ is a $k$-polycosymplectic manifold.

Let us prove that
\begin{equation}\label{eq::Rel41}
\jmath_{\bm\mu}^*\bm\omega=\pi_{\bm\mu}^*\bm\omega_{\bm\mu}\,,\qquad \jmath_{\bm\mu}^*\bm\tau=\pi_{\bm\mu}^*\bm\tau_{\bm\mu}\,.
\end{equation}
It stems from \eqref{eq:relll} that
\begin{equation*}
 \widetilde{\pi}_{\bm\mu}^*\left({\rm pr}^*_{M^{\bm\Delta}_{\bm\mu}} {\bm\omega}_{\bm\mu}+\d {\bm{u}} \barwedge {\rm pr}_{M_{\bm\mu}^\Delta}^* {\bm\tau}_{\bm\mu} \right)=\widetilde{\jmath}_{\bm\mu}^*\left({\rm pr}_M^* \bm\omega+\d {\bm u} \barwedge  {\rm pr}_M^*\bm\tau \right), 
 \end{equation*}
 which amounts to
 \begin{equation}\label{eq:Fuck}
 ({\rm pr}_{M^{\bm\Delta}_{\bm\mu}}\circ \widetilde{\pi}_{\bm\mu})^* \bm\omega_{\bm \mu} +\d \bm u\barwedge ({\rm pr}_{M^{\bm\Delta}_{\bm\mu}}\circ \widetilde{\pi}_{\bm\mu})^*{\bm \tau}_{\bm \mu} = ({\rm pr}_{M}\circ \widetilde{\jmath}_{\bm\mu})^* \bm\omega  +\d \bm u\barwedge ({\rm pr}_{M}\circ \widetilde{\jmath}_{\bm\mu})^*\bm \tau .
\end{equation}
Composing on both sides of the last equality by $\iota_{\bm u}^*$, one gets
$$
({\rm pr}_{M^{\bm\Delta}_{\bm\mu}}\circ \widetilde{\pi}_{\bm\mu}\circ \iota_{\bm u})^* \bm\omega_{\bm \mu} =({\rm pr}_{M}\circ \widetilde{\jmath}_{\bm\mu}\circ\iota_{\bm u})^* \bm\omega  
$$
and since $\jmath_{\bm \mu}={\rm pr}_{M}\circ \widetilde{\jmath}_{\bm\mu}\circ\iota_{\bm u}$ and
${\rm pr}_{M^{\bm\Delta}_{\bm\mu}}\circ \widetilde{\pi}_{\bm\mu}\circ \iota_{\bm u}=\pi_{\bm\mu}$, one has that
$$
\pi_{\bm\mu}^* \bm\omega_{\bm \mu}  =\jmath_{\bm\mu}^* \bm\omega  ,
$$
which proves the first equality in \eqref{eq::Rel41}. Composing \eqref{eq:Fuck} with $\partial/\partial u^1,\ldots,\partial/\partial u^k$ and repeating the above procedure, we get the second equality in \eqref{eq::Rel41}.
 \end{proof}

\begin{theorem}
\label{Th::PolyCoReductionDynamics}
Let the assumptions of Theorem \ref{Th::PolisymplecticReductionJ} be satisfied. Let $\bfX^h=(X_{1}^h,\ldots,X_{k}^h)$ be a $k$-polycosymplectic Hamiltonian $k$-vector field associated with a $G$-invariant function $h\in \Cinfty(M)$ relative to the Lie group action $\Phi$. Assume that $\Phi_{g*}\bfX^h=\bfX^h$ for every $g\in G$ and  $\bfX_h$ is tangent to $\mathbf{J}^{\Phi-1}(\boldsymbol{\mu})$. Then, for every $\alpha=1,\ldots,k$, the flow $F^\alpha_s$ of $X_{\alpha}^h$ leave $\mathbf{J}^{\Phi-1}(\boldsymbol{\mu})$ invariant and induces a unique flow $K^\alpha_s$ on $\mathbf{J}^{\Phi-1}(\boldsymbol{\mu})/G_{\boldsymbol{\mu}}^{\boldsymbol{\Delta}}$ satisfying $\pi_{\boldsymbol{\mu}}\circ F^\alpha_s=K^\alpha_s\circ \pi_{\boldsymbol{\mu}}$.
\end{theorem}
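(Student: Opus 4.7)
The plan is to mirror the argument of Theorem \ref{Th::PolyReductionDynamics} by combining two elementary ingredients: the tangency of each $X_\alpha^h$ to $\mathbf{J}^{\Phi-1}(\bm\mu)$ and the $G$-invariance of the $k$-vector field $\bfX^h$. First, I would observe that since $X_\alpha^h$ is tangent to $\mathbf{J}^{\Phi-1}(\bm\mu)$ by hypothesis, the standard theory of ordinary differential equations on manifolds shows that the maximal integral curves of $X_\alpha^h$ issued from points of $\mathbf{J}^{\Phi-1}(\bm\mu)$ remain inside $\mathbf{J}^{\Phi-1}(\bm\mu)$. Consequently, the local flow $F^\alpha_s$ of $X_\alpha^h$ on $M$ restricts, for each admissible $s$, to a diffeomorphism between open subsets of $\mathbf{J}^{\Phi-1}(\bm\mu)$.

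Next, the invariance hypothesis $\Phi_{g*}\bfX^h=\bfX^h$ yields $\Phi_{g*}X_\alpha^h=X_\alpha^h$ for every $g\in G$ and every $\alpha=1,\dotsc,k$. By naturality of the flow, this is equivalent to the commutation relation $\Phi_g\circ F^\alpha_s=F^\alpha_s\circ \Phi_g$ on $M$. By Proposition \ref{Prop::GenEqJPolyco}, the momentum map $\mathbf{J}^\Phi$ is $\bm\Delta$-equivariant, so the subgroup $G^{\bm\Delta}_{\bm\mu}$ of $G$ preserves $\mathbf{J}^{\Phi-1}(\bm\mu)$. Restricting the commutation relation to $g\in G^{\bm\Delta}_{\bm\mu}$, I would deduce that $F^\alpha_s$, viewed on $\mathbf{J}^{\Phi-1}(\bm\mu)$, sends each $G^{\bm\Delta}_{\bm\mu}$-orbit onto a $G^{\bm\Delta}_{\bm\mu}$-orbit.

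Finally, since $\pi_{\bm\mu}\colon\mathbf{J}^{\Phi-1}(\bm\mu)\to \mathbf{J}^{\Phi-1}(\bm\mu)/G^{\bm\Delta}_{\bm\mu}$ is a surjective submersion by the quotientability assumption, and $\pi_{\bm\mu}\circ F^\alpha_s$ is constant on $G^{\bm\Delta}_{\bm\mu}$-orbits by the previous step, it factors uniquely through $\pi_{\bm\mu}$ as $\pi_{\bm\mu}\circ F^\alpha_s=K^\alpha_s\circ \pi_{\bm\mu}$ for a unique smooth map $K^\alpha_s$ on the quotient. The local one-parameter group property $F^\alpha_{s_1+s_2}=F^\alpha_{s_1}\circ F^\alpha_{s_2}$ then transfers through this intertwining relation to $K^\alpha_{s_1+s_2}=K^\alpha_{s_1}\circ K^\alpha_{s_2}$, showing that $K^\alpha_s$ is indeed a local flow on $\mathbf{J}^{\Phi-1}(\bm\mu)/G^{\bm\Delta}_{\bm\mu}$.

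No substantive obstacle is expected here; the proof is essentially that of Theorem \ref{Th::PolyReductionDynamics}. The only delicate point worth emphasising is that one must use the affine isotropy $G^{\bm\Delta}_{\bm\mu}$ rather than the naive co-adjoint isotropy in order to guarantee that $\mathbf{J}^{\Phi-1}(\bm\mu)$ is stable under the relevant subgroup, which is precisely the reason for introducing the affine Lie group action $\bm\Delta$ in Section \ref{Sec::without}.
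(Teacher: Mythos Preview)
Your argument is correct and follows the same core strategy as the paper for the portion of the proof that the theorem statement literally asserts: tangency gives invariance of the level set under each flow $F^\alpha_s$, and $G$-invariance of $\bfX^h$ (equivalently, $\Lie_{\xi_M}X^h_\alpha=0$, which is how the paper phrases it) lets each restricted flow descend through the quotientable submersion $\pi_{\bm\mu}$ to a unique flow $K^\alpha_s$ on $\mathbf{J}^{\Phi-1}(\bm\mu)/G^{\bm\Delta}_{\bm\mu}$.

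The paper's proof, however, goes further than the theorem statement and establishes an additional fact you do not address: the resulting reduced $k$-vector field $\bfY=(Y_1,\dotsc,Y_k)$ is itself a $k$-polycosymplectic Hamiltonian $k$-vector field on $(\mathbf{J}^{\Phi-1}(\bm\mu)/G^{\bm\Delta}_{\bm\mu},\bm\tau_{\bm\mu},\bm\omega_{\bm\mu})$ for the reduced Hamiltonian $h_{\bm\mu}$ determined by $\pi_{\bm\mu}^*h_{\bm\mu}=\jmath_{\bm\mu}^*h$. This is where the hypotheses of Theorem~\ref{Th::kpolycoreduction} (referenced in the statement via Theorem~\ref{Th::PolisymplecticReductionJ}) are actually used, since one needs the relations $\pi_{\bm\mu}^*\bm\omega_{\bm\mu}=\jmath_{\bm\mu}^*\bm\omega$ and $\pi_{\bm\mu}^*\bm\tau_{\bm\mu}=\jmath_{\bm\mu}^*\bm\tau$, together with the projectability of the Reeb vector fields, to verify $\inn_{\bfY}\bm\omega_{\bm\mu}=\d h_{\bm\mu}-(R_{\alpha\bm\mu}h_{\bm\mu})\tau^\alpha_{\bm\mu}$ and $\inn_{Y_\alpha}\bm\tau_{\bm\mu}=e_\alpha$. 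Your proof never invokes conditions \eqref{Eq::PolycosymplecticReductionTh1}--\eqref{Eq::PolycosymplecticReductionTh2} because the bare flow-reduction statement does not require them; the paper's extra Hamiltonian verification is what justifies their presence among the hypotheses.
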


\begin{proof}
    Given the assumption that a $k$-polycosymplectic Hamiltonian $k$-vector field $\bfX^h$ is tangent to $\mathbf{J}^{\Phi-1}(\boldsymbol{\mu})$, it follows that each integral curve $F_s^\alpha$ of $X^h_\alpha$ with initial condition within $\mathbf{J}^{\Phi-1}(\boldsymbol{\mu})$ is contained in $\mathbf{J}^{\Phi-1}(\boldsymbol{\mu})$ for all the values of its parameter $s \in \mathbb{R}$ and $\alpha=1,\ldots,k$. Since $\Phi_{g*}\bfX^h=\bfX^h$ for every $g\in G$, one has that $\Lie_{\xi_M}X^h_\alpha=0$ for $\alpha = 1, \ldots, k$. This yields a reduced $k$-vector field $\mathbf{Y}=(Y_1,\ldots, Y_k)$ defined on the quotient manifold $\mathbf{J}^{\Phi-1}(\boldsymbol{\mu})/G^{\boldsymbol{\Delta}}_{\boldsymbol{\mu}}$, such that $\pi_{\bm\mu}\circ F^\alpha_s=K^\alpha_s\circ \pi_{\bm \mu}$, where $K_s^\alpha$ is the flow of $Y_\alpha$, for $\alpha=1,\ldots,k$. Furthermore, the $G$-invariance of $h\in\Cinfty(M)$ yields that there exists a reduced Hamiltonian function $h_{\boldsymbol{\mu}}\in\Cinfty(\mathbf{J}^{\Phi-1}(\boldsymbol{\mu})/G^{\boldsymbol{\Delta}}_{\boldsymbol{\mu}})$ such that $\pi_{\boldsymbol{\mu}}^*h_{\boldsymbol{\mu}} = \jmath_{\boldsymbol{\mu}}^*h$.
    
    Next, let us verify that $\bfY$ is a reduced $k$-polycosymplectic Hamiltonian $k$-vector field associated with $h_{\bm\mu}$. Indeed, the Reeb vector fields $R_1,\ldots, R_k$ are tangent to $\mathbf{J}^{\Phi-1}(\boldsymbol{\mu})$ and give rise to linearly independent vector fields $R^{\mathbf{J}^\Phi}_1,\ldots,R^{\mathbf{J}^\Phi}_k$ on ${\bf J}^{\Phi-1}({\bm \mu})$. Due to this fact and Theorem \ref{Th::kpolycoreduction}, it follows that
    \begin{multline*}
\d\pi^*_{\bm\mu}h_{\bm\mu}=\d\jmath_{\bm\mu}^*h=\jmath^*_{\bm\mu}(\inn_{\mathbf{X}^h}\bm\omega+(R_\alpha h)\tau^\alpha)=\inn_{\mathbf{X}^h}\jmath^*_{\bm\mu}\bm\omega+(R^{\mathbf{J}^\Phi}_\alpha \jmath^*_{\bm\mu}h)\jmath^*_{\bm\mu}\tau^\alpha)\\=\inn_{\mathbf{X}^h}\pi^*_{\bm\mu}\bm\omega_{\bm \mu}+(R^{\mathbf{J}^\Phi}_\alpha \pi^*_{\bm\mu}h_{\bm \mu})\pi^*_{\bm\mu}\tau_{\bm \mu}^\alpha)=\pi^*_{\bm\mu}(\inn_{\bm Y}\bm\omega_{\bm\mu}+(R_{{\alpha\bm\mu} }h_{\bm\mu})\tau_{\bm\mu}^\alpha),
    \end{multline*}
where we denoted by $\bfX^h$ both a $k$-polycosymplectic Hamiltonian $k$-vector field on $M$ and its restriction to ${\bf J}^{\Phi -1}(\bm\mu)$. Moreover,
    \begin{gather*}
        \pi^*_{\bm\mu}(\inn_{Y_\alpha}\tau^\beta_{\bm\mu})=\inn_{X^h_\alpha}(\pi^*_{\bm\mu}\tau^\beta_{\bm\mu})=\jmath_{\bm\mu}^*(\inn_{X^h_\alpha}\tau^\beta)=\delta^\beta_\alpha.
    \end{gather*}
Therefore, $\bfY$ is a reduced $k$-polycosymplectic Hamiltonian $k$-vector field such that $\pi_{\bm\mu*}\mathbf{X}^h=\mathbf{Y}$ and $\pi_{\bm\mu}\circ F^\alpha_s=K^\alpha_s\circ \pi_{\bm\mu}$ holds for $\alpha=1,\ldots,k$ and $s\in \mathbb{R}$.
\end{proof}
Additionally, Theorem \ref{Th::PolyCoReductionDynamics} could be established via $k$-polysymplectic reduction, i.e. Theorem \ref{Th::PolisymplecticReductionJ}, by extending the Hamiltonian function $h\in \Cinfty(M)$ to $\R^k\x M$. Let us study this in some detail.

Consider the extended Hamiltonian function $\wtl h\in\Cinfty(\R^k\x M)$ given by
\begin{equation}
\widetilde{h}(\bm{u},x)=h(x)-\sum^k_{\alpha=1}u^\alpha\,,\qquad \forall x\in M\,,\qquad 
\forall {\bm u}=(u_1,\ldots,u_k)\in\R^k\,.
\end{equation}
Recall that a $k$-polycosymplectic Hamiltonian $k$-vector field $\bfX^h$ associated with $h$ satisfies the following equations
\[
\inn_{\bfX^h}\boldsymbol{\omega}=\d h-(R_\beta h)\tau^\beta\,,\qquad \inn_{X^\alpha_h}\boldsymbol{\tau}=e^\alpha\,,\qquad \alpha=1,\ldots,k\,.
\]
Then, our aim is to extend $\bfX^h$ to a $k$-polysymplectic Hamiltonian $k$-vector field $\bfX^{\wtl h}$ associated with $\wtl h$. It can be verified that $\bfX^{\wtl h}$ of the form
\[
\bfX^{\widetilde{h}}=\bfX^h+(R_\alpha h)\frac{\partial}{\partial u^\alpha}\,,
\]
satisfies the required conditions for a $k$-polysymplectic Hamiltonian $k$-vector field on $\R^k\x M$ relative to $\widetilde{\bm\omega}$, namely
\[
\inn_{\bfX^{\widetilde{h}}}\widetilde{\bm\omega}=\inn_{\bfX^{h}}\bm\omega+(R_\beta h)\tau^\beta-\sum^k_{\alpha=1}(\inn_{X^\alpha_h}\tau^\alpha) \d u^\alpha=\d h-\sum^k_{\alpha=1}\d u^\alpha=\d\widetilde{h}\,,
\]
where we have used the natural isomorphism $\T_{(\bm{u},x)}(\R^k\times M)\simeq \T_{\bm{u}}\R^k\times \T_x M$ for every $(\bm u,x)\in\R^k\x M$.
Therefore, $\bfX^{\widetilde{h}}$ is a $k$-polysymplectic Hamiltonian $k$-vector field on $\mathbb{R}^k\times M$ related to $\widetilde{h}\in \Cinfty(\R^k\times M)$ with respect to $\widetilde{\bm \omega}$. From Theorem \ref{Th::PolyReductionDynamics} and Lemma \ref{Lemm::MomentumMapRegValue}, it follows that the reduced $k$-polysymplectic Hamiltonian $k$-vector field $\bfX^h_{\bm \mu}$ has the following form
\[
\bfX^{\widetilde{h}}_{\bm \mu}=\bfX^h_{\bm\mu}+(R_{\alpha\bm \mu} 
\,h_{\bm\mu})\frac{\partial}{\partial u^\alpha}\,.
\]
Consequently, $\bfX^{\widetilde{h}}_{\boldsymbol{\mu}}$ projects onto $\mathbf{J}^{\Phi-1}(\bm\mu)/G^{\bm\Delta}_{\bm \mu}$ and its projection is $\bfX^h_{\bm\mu}$. It is immediate that the latter gives the desired reduction.

\section{Examples}\label{Se::Examples}

\subsection{The product of cosymplectic manifolds}

This section presents an illustrative example of the $k$-polycosymplectic reduction of a product of $k$ cosymplectic manifolds. For simplicity, we will assume some technical conditions. Let $M=\bigtimes^k_{\alpha=1}M_\alpha$ for some $k$ cosymplectic manifolds $(M_\alpha,\tau^\alpha_M,\omega^\alpha_M)$ for $\alpha=1,\ldots,k$. If ${\rm pr}_\alpha: M\rightarrow M_\alpha$ is the canonical projection onto the $\alpha$-component, $(M,\sum_{\alpha=1}^k{\rm pr}_\alpha^*\tau_M^\alpha\otimes e_\alpha,\sum_{\alpha=1}^k{\rm pr}_\alpha^*\omega^\alpha_M\otimes e_\alpha)$ is a $k$-polycosymplectic manifold. Moreover, assume that a Lie group action $\Phi^\alpha: G_\alpha\x M_\alpha\rightarrow M_\alpha$  admits a cosymplectic momentum map $\mathbf{J}^{\Phi^\alpha}: M_\alpha\rightarrow\mathfrak{g}^*_\alpha$ for each $\alpha=1,\ldots,k$  and each $\Phi^\alpha$  acts in a quotientable manner on the level sets given by regular values of ${\bf J}^{\Phi^\alpha}$.

Then, define the Lie group action of $G=G_1\x\ldots\x G_k$ on $M$ as
\[
    \Phi:G\x M\ni (g_1,\ldots,g_k,x_1,\ldots,x_k)\longmapsto (\Phi^1_{g_1}(x_1),\ldots,\Phi^k_{g_k}(x_k))\in M\,.
\]
Moreover, let $\mathfrak{g}=\mathfrak{g}_1\x\ldots\x\mathfrak{g}_k$ be the Lie algebra of $G$. Then, we have the $k$-polycosymplectic momentum map 
\[
\mathbf{J}:M\ni(x_1,\ldots,x_k)\longmapsto(\mathbf{J}^{\Phi_1}(x_1),\ldots,\mathbf{J}^{\Phi_k}(x_k))\in\mathfrak{g}^*,
\]
where $\mathfrak{g}^*=\mathfrak{g}_1^*\x\ldots\x\mathfrak{g}_k^*$ is dual space to $\mathfrak{g}$.
Suppose, that $\mu^\alpha\in\mathfrak{g}^*_\alpha$ is a regular value of $\mathbf{J}^{\Phi_\alpha}:M_\alpha\rightarrow\mathfrak{g}^*_\alpha$ for each $\alpha=1,\ldots,k$. Hence, $\bm{\mu}=(\mu^1,\ldots,\mu^k)\in(\mathfrak{g}^*)^k$ is a regular value of $\mathbf{J}$. Then,   $\Phi$ acts in a quotientable on the associated level sets of ${\bf J}$.

Therefore, if $x=(x_1,\ldots,x_k)\in\mathbf{J}^{-1}(\bm\mu)$, it follows that
\begin{align}
    \ker\T_x \mathbf{J}_\alpha &= \T_{x_1}M_1\oplus\ldots\oplus \ker \T_{x_\alpha} \mathbf{J}^{\Phi_\alpha}\oplus\ldots\oplus \T_{x_k}M_k,\\
    \T_x\left(\mathbf{J}^{-1}(\bm\mu)\right)&=\ker \T_{x_1}\mathbf{J}^{\Phi_1}\oplus\ldots\oplus \ker \T_{x_k}\mathbf{J}^{\Phi_k},\\
    \ker \omega_x^\alpha\cap\ker\tau_x^\alpha &= \T_{x_1}M_1\oplus\ldots\oplus \T_{x_{\alpha-1}}M_{\alpha-1}\oplus \{0\}\oplus \T_{x_{\alpha+1}}M_{\alpha+1}\oplus\ldots\oplus \T_{x_k}M_k\,,\\
    \T_x\left(G^{\Delta^\alpha}_{\mu^\alpha}x\right) &= \T_{x_1}\left(G_1 x_1\right)\oplus\ldots\oplus \T_{x_\alpha}\left(G^{\Delta^\alpha}_{\alpha\mu^\alpha}x_\alpha\right)\oplus\ldots\oplus \T_{x_k}\left(G_{k}x_k\right),\\
    \T_x\left(G_{\bm\mu}^{\bm\Delta} x\right) &= \T_{x_1}\left(G^{\Delta^1}_{1\mu^1}x_1\right)\oplus\ldots\oplus \T_{x_k}\left(G^{\Delta^k}_{k\mu^k}x_k\right).
\end{align}
Then,
\begin{align*}
\ker\T_x\mathbf{J}_\alpha &= \T_x\left( \mathbf{J}^{-1}({\bf \mu})\right)+\ker\omega_x^\alpha\cap\ker\tau_x^\alpha,\\ \T_x\left(G_{\bm\mu}^{\bm\Delta} x\right)&=\bigcap^k_{\beta=1}\left(\ker\omega_x^\beta\cap\ker\tau_x^\beta + \T_x\left(G^{\Delta^\beta}_{\mu^\beta}x\right)\right),
\end{align*}
for $\alpha=1,\ldots,k$ and every 
 regular $\bm\mu\in (\mathfrak{g}^*)^k$ and $x\in {\bf J}^{-1}({\bm \mu})$. Recall that, by Theorem \ref{Th::PolisymplecticReductionJ}, these equations guarantee that the reduced space $\mathbf{J}^{-1}(\bm\mu)/G^{\bm\Delta}_{\bm\mu}$ can be endowed with a $k$-polycosymplectic structure, while
\[
\mathbf{J}^{-1}(\bm\mu)/G^{\bm\Delta}_{\bm\mu}\simeq \mathbf{J}^{\Phi_1-1}(\mu^1)/G^{\Delta^1}_{1\mu^1}\x\ldots\x \mathbf{J}^{\Phi_k-1}(\mu^k)/G^{\Delta^k}_{k\mu^k}.
\]


\subsection{Two coupled vibrating strings}

Consider the manifold $M = \R^2\x\bigoplus^2\cT\R^2$ with adapted coordinates $\{t,x;q^1,q^2,p_1^t,p_1^x,p_2^t,p_2^x\}$ and the standard associated two-polycosymplectic structure
\[
\boldsymbol{\tau}=\d t\otimes e_1+\d x\otimes e_2,\qquad \boldsymbol{\omega}=(\d q^1\wedge \d p_1^t+\d q^2\wedge \d p_2^t)\otimes e_1+(\d q^1\wedge \d p_1^x+\d q^2\wedge \d p_2^x)\otimes e_2.
\]
Consider the Hamiltonian function $h\in\Cinfty(M)$ given by
\[
h(t,x,q^1,q^2,p_1^t,p_2^t,p_1^x,p_2^x) = \frac{1}{2}\left( (p_1^t)^2 + (p_2^t)^2 - (p_1^x)^2 - (p_2^x)^2 \right) + C(t,x,q^1-q^2)\,,
\]
where $C(t,x,q^1-q^2)$ is a coupling function between the two strings. This system admits a Lie symmetry given by 
\[
\xi_M=
\frac{\partial}{\partial q^1}+\frac{\partial}{\partial q^2}
\]
associated with the Lie group action $\Phi:\R\x M\rightarrow M$ acting by translations along the $q^1+q^2$ direction, namely
\[
\Phi:(\lambda;t,x,q^1,q^2,p^t_1,p^t_2,p^x_1,p^x_2)\ni \mathbb{R}\times M\mapsto (t,x,q^1+\lambda,q^2+\lambda,p^t_1,p^t_2,p^x_1,p^x_2)\in  M.
\]
The Lie group action $\Phi$ gives rise to a two-polycosymplectic momentum map $\mathbf{J}^\Phi$ given by
\[\textstyle
{\bf J}^\Phi:(t,x,q^1,q^2,p^t_1,p^t_2,p^x_1,p^x_2)\in \R^2\times\bigoplus^2\cT\R^2\mapsto (p_1^t+p^t_2,p_1^x+p_2^x)=:(\mu^1,\mu^2)=\bm\mu\in (\R^*)^2.
\]
Consequently, the level set of the two-polycosymplectic momentum map $\mathbf{J}^\Phi$ is as follows
\[
{\bf J}^{\Phi-1}(\bm\mu)=\{(t,x,q^1,q^2,p_1^t,\mu^1-p_1^t,p_1^x,\mu^2-p^x_1)\in M:(t,x,q^1,q^2,p_1^t,p_1^x)\in \mathbb{R}^6\}.
\]
It is immediate that $\bm\mu=(\mu^1,
\mu^2)$ is a weak regular value of $\mathbf{J}^\Phi$ and $\mathbf{J}^\Phi$ is ${\Ad}^{*2}$-equivariant. Note that $\mathbf{J}^{\Phi-1}(\bm\mu)\simeq \R^6$ and $\R=G_{\boldsymbol{\mu}}=G_{\mu^\alpha}$ for $\alpha=1,2$. Then,
\begin{gather*}
    \T_m\left( G_{\bm\mu}m\right)=\T_m\left( G_{\mu^\alpha}m\right)=\left\langle \frac{\partial}{\partial q^1}+\frac{\partial}{\partial q ^2}\right\rangle_m,\\
    (\ker \omega^1\cap\ker\tau^1)_m=\left\< \frac{\partial}{\partial x},\frac{\partial}{\partial p^x_1},\frac{\partial}{\partial p^x_2}\right\>_m,\quad(\ker\omega^2\cap\ker\tau^2)_m=\left\< \frac{\partial}{\partial t},\frac{\partial}{\partial p^t_1},\frac{\partial}{\partial p^t_2}\right\>_m,\\\quad \T_m\mathbf{J}^{\Phi-1}(\bm\mu)=\left\< \frac{\partial}{\partial t},\frac{\partial}{\partial x},\frac{\partial}{\partial q^1},\frac{\partial}{\partial q^2},\frac{\partial}{\partial p^t_1}-\frac{\partial}{\partial p^t_2},\frac{\partial}{\partial p^x_1}-\frac{\partial}{\partial p^x_2}\right\>_m,\\
    \ker \T_m\mathbf{J}^\Phi_1=\left\< \frac{\partial}{\partial t}, \frac{\partial}{\partial x}, \frac{\partial}{\partial q^1},\frac{\partial}{\partial q^2}, \frac{\partial}{\partial p^x_1}, \frac{\partial}{\partial p^t_1}-\frac{\partial}{\partial p^t_2},\frac{\partial}{\partial p^x_2}\right\>_m,\\
    \ker \T_m\mathbf{J}^\Phi_2=\left\< \frac{\partial}{\partial t}, \frac{\partial}{\partial x}, \frac{\partial}{\partial q^1},\frac{\partial}{\partial q^2}, \frac{\partial}{\partial p^x_1}-\frac{\partial}{\partial p^x_2}, \frac{\partial}{\partial p^t_1},\frac{\partial}{\partial p^t_2}\right\>_m
\end{gather*}
and, indeed, the conditions \eqref{Eq::PolycosymplecticReductionTh1} and \eqref{Eq::PolycosymplecticReductionTh2} hold. 

Recall that the dynamics on $M$ is given by a two-polycosymplectic Hamiltonian two-vector field. Therefore, let us consider a general two-vector field ${\bf X}^h = (X^h_1,X^h_2)\in\X^2(M)$ with local expression
\[
    X^h_\alpha = A_\alpha^t\parder{}{t} + A_\alpha^x\parder{}{x} + B_\alpha^1\parder{}{q^1} + B_\alpha^2\parder{}{q^2} + C_{\alpha 1}^t\parder{}{p_1^t} + C_{\alpha 1}^x\parder{}{p_1^x} + C_{\alpha 2}^t\parder{}{p_2^t} + C_{\alpha 2}^x\parder{}{p_2^x}\,.
\]
Imposing the two-polycosymplectic Hamiltonian equations \eqref{eq:polycosymplectic-equations-fields}, the previous two-polycosymplectic Hamiltonian two-vector field $\bfX^h = (X^h_1,X^h_2)$ must be of the form
\[
\begin{gathered}
    X_1^h = \parder{}{t} + p_1^t\parder{}{q^1} + p_2^t\parder{}{q^2} + C_{1 1}^t\parder{}{p_1^t} + C_{1 1}^x\parder{}{p_1^x} + C_{1 2}^t\parder{}{p_2^t} + C_{1 2}^x\parder{}{p_2^x}\,,\\
    X_2^h = \parder{}{x} - p_1^x\parder{}{q^1} - p_2^x\parder{}{q^2} + C_{2 1}^t\parder{}{p_1^t} - \left( C_{11}^t + \parder{C}{q} \right)\parder{}{p_1^x} + C_{2 2}^t\parder{}{p_2^t}+\left(\frac{\partial C}{\partial q} -C_{12}^t\right)\parder{}{p_2^x}\,,
\end{gathered}
\]
where $q=q^1-q^2$ and $C_{11}^t,C_{11}^x,C_{12}^t,C_{12}^x,C_{21}^t,C_{22}^t\in\Cinfty(M)$ are, in principle, arbitrary functions. 

 Its integral sections, with $t,x$ being the coordinates in its domain, satisfy 
 \begin{gather}
 \label{Eq::FunctionsExample}
     C^t_{11}=\frac{\partial p^t_1}{\partial t}\,,\qquad C^x_{11}=\frac{\partial p^x_1}{\partial t}\,,\qquad C^x_{12}=\frac{\partial p^x_2}{\partial t}\,,\qquad C^t_{21}=\frac{\partial p^t_1}{\partial x}\,,\qquad C^t_{22}=\frac{\partial p^t_2}{\partial x}\,,\\
     -C^t_{11}-\frac{\partial C}{\partial q}=\frac{\partial p^x_1}{\partial x}\,,\qquad \frac{\partial C}{\partial q}-C^t_{12}=\frac{\partial p^x_2}{\partial x}\,.
 \end{gather}
 This system of PDEs is integrable when $[X^h_1,X^h_2]=0$, for instance, if $C=qF(x)+\widehat{F}(t,x)$ for arbitrary functions $\widehat{F}(t,x)$, $F(x)$, while $C_{11}^t,C_{11}^x,C_{12}^t,C_{12}^x,C_{21}^t,C_{22}^t$ vanish.
Then, to apply Theorem \ref{Th::PolyCoReductionDynamics}, we require $\bfX^h$ to be tangent to $\mathbf{J}^{\Phi-1}(\bm{\mu})$ and $\Lie_{\xi_M}X_\alpha=0$ for $\alpha=1,2$. Thus, $C_{12}^t+C^t_{11}=0$, $C^x_{11}+C^x_{12}=0$, and $C^t_{21}+C^t_{22}=0$ and $C_{ij}^t,C_{ij}^x$ must be first-integrals of $\xi_M$ for $i,j=1,2$. A two-polycosymplectic Hamiltonian two-vector field gives rise to the following Hamilton--De Donder--Weyl equations
\begin{gather}
    \parder{q^1}{t} = p_1^t\,,\qquad \parder{q^1}{x} = -p_1^x\,,\qquad \parder{q^2}{t} = p_2^t\,,\qquad \parder{q^2}{x} = -p_2^x\,,\\
    \parder{p_1^t}{t} + \parder{p_1^x}{x} = -\parder{C}{q}\,,\qquad \parder{p_2^t}{t} + \parder{p_2^x}{x} = \frac{\partial C}{\partial q}\,.
\end{gather}

Since $G=\R$ acts on $\mathbf{J}^{\Phi-1}(\boldsymbol\mu)$ by translations along the $q^1+q^2$ direction, the Lie group action $\Phi$ is free and proper. Therefore, $\mathbf{J}^{\Phi-1}(\boldsymbol{\mu})/G_{\boldsymbol{\mu}}$ is a smooth manifold and 
\[
\mathbf{J}^{\Phi-1}(\boldsymbol{\mu})/G_{\boldsymbol{\mu}}\simeq \R^2\x\T^*\R^2/\R\simeq\R^2\times \R^2/\R\times \R\simeq\R^2\x \R\x\R^2.
\]
Then, on the reduced manifold $\mathbf{J}^{\Phi-1}(\boldsymbol{\mu})/\R$, the reduced two-polycosymplectic structure reads
\[
\bm\tau_{\bm\mu}=\d t\otimes e_1+\d x\otimes e_2\,,\qquad \bm\omega_{\bm\mu}=\d q\wedge \d p_1^t\otimes e_1+\d q\wedge \d p_1^x\otimes e_2\,.
\]
Indeed, it becomes a two-cosymplectic structure since
\[
\ker \bm\omega_{\bm\mu}=\left\langle \frac{\partial}{\partial t},\frac{\partial}{\partial x}\right\rangle\,,\qquad \ker \bm\tau_{\bm\mu}\cap \ker\bm\omega_{\bm\mu}=0\,.
\]
The reduced dynamics on $\mathbf{J}^{\Phi-1}(\boldsymbol{\mu})/\R$ is given by the reduced two-polycosymplectic Hamiltonian two-vector field $\mathbf{X}^{h_{\boldsymbol{\mu}}}=(X^{h_{\boldsymbol{\mu}}}_1,X^{h_{\boldsymbol{\mu}}}_2)$ of the form
\begin{align}
X^{h_{\boldsymbol{\mu}}}_1&=\parder{}{t}+p^t\parder{}{q}+2C^t_{11}\parder{}{p^t}+2C^{x}_{11}\parder{}{p^x}\,,\\
X^{h_{\boldsymbol{\mu}}}_2&=\parder{}{x}-p^x\parder{}{q}-2C^t_{22}\parder{}{p^t}-2\left(C^t_{11}+\frac{\partial C}{\partial q}\right)\parder{}{p^x}\,,
\end{align}
where $h_{\boldsymbol{\mu}}=\frac{1}{4}\left( (p^t)^2+(p^x)^2+(\mu^1)^2-(\mu^2)^2\right)+C(t,x,q)$ is the reduced Hamiltonian function, where $p^t=p_1^t-p^t_2$ and $p^x=p^x_1-p^x_2$. A reduced two-polycosymplectic Hamiltonian two-vector field induces the following Hamilton--De Donder--Weyl equations
\begin{gather*}
    \frac{\partial q}{\partial x}=-p^x\,,\qquad \frac{\partial q}{\partial t}=p^t\,,\\
    \frac{\partial p^t}{\partial t}+\frac{\partial p^x}{\partial x}=-2\frac{\partial C}{\partial q}\,.
\end{gather*}

Let us generalise some of the ideas of the previous example. The starting point is the following proposition, whose proof is an immediate extension to the $k$-polycosymplectic realm of results in symplectic geometry.
\begin{proposition}\label{Prop::Red} Let $\Phi:G\times Q\rightarrow Q$ be a Lie group action. There exists a Lie group action $\widetilde{\Phi}:G\times \mathbb{R}^k\times \bigoplus_{\alpha=1}^k\cT Q\rightarrow \mathbb{R}^k\times \bigoplus_{\alpha=1}^k\cT Q$ of the form
$$
\widetilde\Phi(g,x,q,p^\alpha)=(x,\Phi(g,q),(\T_{\Phi(g,q)}\Phi_{g^-1})^*p^\alpha)\in \mathbb{R}^k\times \bigoplus_{\alpha=1}^k\cT Q\,.
$$
This Lie group action is $k$-polycosymplectic relative to the canonical $k$-polycosymplectic structure on $\mathbb{R}^k\times \bigoplus_{\alpha=1}^k\cT Q$ with the unique momentum map ${\bf J}^{\widetilde{\Phi}}:\mathbb{R}^k\times \bigoplus_{\alpha=1}^k \cT Q\rightarrow (\mathfrak{g}^{*})^k$ such that
$$
\langle{\bf J}^{\widetilde{\Phi}}(x,q,p^\alpha),{\bm \xi}\rangle=\iota_{{\bm \xi}_M}{\bf p}(q)\,,\qquad \forall {\bm \xi}\in \mathfrak{g}^k\,,\qquad \forall (x,q,p^\alpha)\in \mathbb{R}^k\times \bigoplus_{\alpha=1}^k\cT Q\,.
$$
Moreover, ${\bf J}^{\widetilde{\Phi}}$ is ${\rm Ad}^{*k}$-equivariant. 
\end{proposition}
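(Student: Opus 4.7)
The plan is to construct $\widetilde\Phi$ as the fibrewise cotangent lift of $\Phi$ on each of the $k$ factors of $\bigoplus^k\cT Q$, extended as the identity on the $\R^k$ component, and then reduce each required property to a standard statement about cotangent lifts. Smoothness of $\widetilde\Phi$ is immediate from its defining formula. One has $\widetilde\Phi_e=\id$ because $\T_q\id_Q$ is the identity, and $\widetilde\Phi_{gh}=\widetilde\Phi_g\circ\widetilde\Phi_h$ follows from the chain rule
\begin{equation*}
\T_{\Phi_{gh}(q)}(\Phi_{h^{-1}}\circ\Phi_{g^{-1}})=\T_{\Phi_h(q)}\Phi_{h^{-1}}\circ\T_{\Phi_{gh}(q)}\Phi_{g^{-1}},
\end{equation*}
together with the fact that transposition reverses the order of a composition.

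Next, I would verify that $\widetilde\Phi$ preserves the canonical $k$-polycosymplectic structure, which in adapted coordinates reads $\bm\tau=\d x^\alpha\otimes e_\alpha$ and $\bm\omega=\sum_\alpha\pi_\alpha^*\omega_{\cT Q}\otimes e_\alpha$, where $\pi_\alpha:\R^k\times\bigoplus^k\cT Q\to\cT Q$ denotes the projection onto the $\alpha$-th factor of the Whitney sum and $\omega_{\cT Q}$ is the canonical symplectic form. Since $\widetilde\Phi$ acts as the identity on $\R^k$, one has $\widetilde\Phi_g^*\bm\tau=\bm\tau$. Meanwhile $\widetilde\Phi_g^*\omega^\alpha=\omega^\alpha$ reduces on each factor to the classical fact that the cotangent lift of any diffeomorphism preserves the canonical one-form, hence the canonical symplectic form.

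For the momentum map, I would first compute the fundamental vector field $\xi_M$ of $\widetilde\Phi$ attached to $\xi\in\g$: its $\partial/\partial x^\beta$-components vanish (since the action is trivial on $\R^k$), so $\iota_{\xi_M}\bm\tau=0$, and on each $\cT Q$-factor $\xi_M$ restricts to the cotangent lift of $\xi_Q$. The classical identity $\iota_{\xi_{\cT Q}}\omega_{\cT Q}=\d(\iota_{\xi_Q}\theta_{\cT Q})$ on each factor then yields $\iota_{\xi_M}\omega^\alpha=\d(\iota_{\xi_Q}p^\alpha)$, so setting $J_\xi^\alpha(x,q,p^\bullet)=\iota_{\xi_Q}p^\alpha$ produces the formula stated in the proposition once both sides are paired with the dual basis. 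Independence of each $J_\xi^\alpha$ from the $\R^k$-coordinates, together with the identification $R_\beta=\partial/\partial x^\beta$ for the Reeb vector fields, gives $R_\beta J_\xi^\alpha=0$. Uniqueness is forced by the non-degeneracy of $\bm\omega$ on $\ker\bm\tau$.

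Finally, $\Ad^{*k}$-equivariance reduces, componentwise, to the classical $\Ad^*$-equivariance of cotangent-lift momentum maps: for $g\in G$, $\xi\in\g$, and $(x,q,p^\alpha)$ one computes
\begin{equation*}
J_\xi^\alpha(\widetilde\Phi_g(x,q,p^\bullet))=\iota_{\xi_Q(\Phi_g(q))}(\T_{\Phi_g(q)}\Phi_{g^{-1}})^*p^\alpha=\iota_{\T_{\Phi_g(q)}\Phi_{g^{-1}}\xi_Q(\Phi_g(q))}p^\alpha=\iota_{(\Ad_{g^{-1}}\xi)_Q(q)}p^\alpha,
\end{equation*}
where Lemma \ref{AdjointActionTh} is used in the last equality. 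No serious obstacle is expected; the only genuine bookkeeping concerns unpacking the tensor-product pairing between the $k$-vector field $\bm\xi_M$ and the $\R^k$-valued forms, and carefully tracking which fibre of $\cT^*Q$ each transposed differential lands in.
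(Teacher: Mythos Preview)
Your proposal is correct and matches the paper's approach: the paper does not give a detailed proof, stating only that it ``is an immediate extension to the $k$-polycosymplectic realm of results in symplectic geometry,'' and your argument is precisely that extension, reducing each claim componentwise to the classical facts about cotangent lifts (preservation of the Liouville form, the standard momentum map $p\mapsto\iota_{\xi_Q}p$, and its $\Ad^*$-equivariance via Lemma~\ref{AdjointActionTh}). One very minor remark: your sentence ``Uniqueness is forced by the non-degeneracy of $\bm\omega$ on $\ker\bm\tau$'' slightly overstates things, since $k$-polycosymplectic momentum maps are only determined up to an additive constant in $(\mathfrak g^*)^k$; in the proposition, uniqueness is simply that of the map specified by the explicit formula.
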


Proposition \ref{Prop::Red} implies that one may accomplish a $k$-polycosymplectic reduction relative to the level sets of ${\bf J}^{\widetilde{\Phi}}$, provided our technical conditions, e.g. \eqref{Eq::PolycosymplecticReductionTh1} and \eqref{Eq::PolycosymplecticReductionTh2}, are satisfied. In fact, the reduction procedure accomplished in the example of this section is nothing but a particular case of this construction for a Lie group symmetry. It is relevant to stress that this procedure does not allow for a reduction involving the variables of  $\mathbb{R}^k$. Physically, this reduction involves the variables of $\bigoplus_{\alpha=1}^k\cT Q$, namely the so-called fields and their momenta. Meanwhile, the variables in $\mathbb{R}^k$, which are physically related to space-times and other manifolds where the problem under study occurs, cannot be reduced. To do so, we will develop a new method described in the next section.

\section{A \texorpdfstring{$k$}--cosymplectic to \texorpdfstring{$\ell$}--polycosymplectic reduction}\label{Sec::SPRed}

This section performs a Marsden--Weinstein reduction from a $k$-cosymplectic to an $\ell$-cosymplectic manifold. Moreover, our procedure may allow, under certain conditions, for the further successive application of the techniques developed in previous sections to obtain a smaller $\ell$-polycosymplectic manifold. To simplify our presentation and avoid explaining trivial results, we hereafter assume $\ell<k$. In physics, this permits us to develop Marsden--Weinstein reduction techniques for field theories involving the elimination of space-time variables, which is not possible via the approaches described in the previous sections since the fundamental vector fields of the involved $k$-polycosymplectic Lie group action took values in $\ker \bm\tau$. It is worth noting that the reductions developed in this section represent a rather pioneering and non-standard approach in the literature, which is frequently based on other methods, e.g. principal bundles and Lie group actions not involving the reduction of base manifolds \cite{CM08, CR03}. Moreover, we will hereafter give conditions allowing for the reduction of the HDW equations on a $k$-cosymplectic manifold to the ones in the reduced $\ell$-cosymplectic manifold and its potential further $\ell$-polycosymplectic reductions. 

This section is restricted to the study of a canonical $k$-cosymplectic manifold $(M_k\!=\!\mathbb{R}^k\times \bigoplus_{\alpha=1}^k\cT Q,{\bm \tau}_k,{\bm \omega}_k)$ with its natural polarisation $V_k$. It is worth noting that, in virtue of the Darboux's theorem for $k$-cosymplectic manifolds \cite{LM98}, every $k$-cosymplectic manifold is locally diffeomorphic to $(M_k,{\bm \tau}_k,{\bm \omega}_k)$. Hence, our results apply, locally, to every $k$-cosymplectic manifold. Let us start by accomplishing a $k$-cosymplectic to $\ell$-cosymplectic Marsden--Weinstein manifold reduction. As before, $\{e_1,\ldots,e_k\}$ is a basis for $\mathbb{R}^k$, while ${\bm \tau}_k=\tau^\alpha\otimes e_\alpha\in \Omega^1(M_k,\mathbb{R}^k)$ and ${\bm \omega}_k=\omega^\alpha\otimes e_\alpha\in \Omega^2(M_k,\mathbb{R}^k)$. Recall that the sum over repeated crossed indexes of a particular type will be over its standard range, e.g. $\alpha$ ranges from 1 to $k$, unless a summation symbol indicating otherwise is  used. It is worth stressing that all results to be hereafter described are local.

\begin{theorem}
\label{Th::kcotosco}
    Let $(M_k,\bm\tau_k, \bm\omega_k, V_k)$ be a canonical $k$-cosymplectic manifold, let $\Phi: G\times M_k\rightarrow M_k$ be an associated $k$-cosymplectic Lie group action, and let $\{\overline{\tau}^1,\ldots,\overline{\tau}^\ell\}$ be a basis of the linear subspace (over the real numbers) of  $\langle\tau^1,\ldots,\tau^k\rangle $ vanishing on the space $W$ of fundamental vector fields of $\Phi$. Assuming, without loss of generality, that the last $k-\ell$ differential one-forms in the basis $\{\tau^1,\ldots,\tau^k\}$ are linearly independent as linear forms on $W$,  we set $\overline{\tau}^\beta=c^\beta_{\,\alpha}\,\, \tau^\alpha$ for certain unique constants $c^\beta_\alpha$ with $\alpha=1,\ldots,k$ and $\beta=1,\ldots,\ell$.  We define $\overline{\bm \tau}=\sum_{\beta=1}^\ell c_\alpha^\beta\, \tau^\alpha\otimes e_\beta$ and $ \overline{\bm\omega}=\sum^\ell_{\beta=1} c^\beta_{\,\alpha}\,\, \omega^\alpha\otimes e_\beta$. If the map
    $$
    \pi:(\bar x^1,\ldots,\bar x^k,q,\bar p^1,\ldots,\bar p^k)\in \mathbb{R}^k\times\bigoplus_{\alpha=1}^k\cT Q\longmapsto (\bar x^1,\ldots,\bar x^\ell,q,\bar{p}^1,\ldots,\bar{p}^\ell)\in \mathbb{R}^\ell\times \bigoplus_{\alpha=1}^\ell\cT Q
    $$
    is the canonical projection, then $(M_\ell=\R^\ell\x\bigoplus_{\alpha=1}^\ell\cT Q,  {\bm\tau}_\ell, {\bm\omega}_\ell)$ is an $\ell$-cosymplectic manifold with
    $$
    \pi^*{\bm \omega}_\ell=\overline{\bm \omega}\,,\qquad\pi^*{\bm \tau}_\ell=\overline{\bm \tau}\,.
    $$
    Furthermore, there exists a Lie group action $\Phi_\ell:G\times M_\ell\rightarrow M_\ell$ that is equivariant relative to $\pi$.
\end{theorem}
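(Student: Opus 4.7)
The plan is to reduce everything to a coordinate calculation: first pick coordinates adapted to the basis change $(c^\beta_\alpha)$, then check that the two forms are basic with respect to $\pi$ and identify the descended structure, and finally descend $\Phi$ itself.

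\textbf{Step 1 (normal form).} First I would extend $(c^\beta_\alpha)$ to an invertible $k\times k$ matrix $\tilde c$ by setting $\tilde c^\beta_\alpha=c^\beta_\alpha$ for $\beta\le\ell$ and $\tilde c^\beta_\alpha=\delta^\beta_\alpha$ otherwise; its invertibility follows from the hypothesis that $\{\tau^{\ell+1},\dotsc,\tau^k\}$ is linearly independent on $W$. Starting from standard canonical coordinates $\{t^\alpha,q^i,p^\alpha_i\}$ on $M_k$, introduce $\bar x^\beta=\tilde c^\beta_\alpha t^\alpha$ and $\bar p^\beta_i=\tilde c^\beta_\alpha p^\alpha_i$. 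Then $d\bar x^\beta=\overline\tau^\beta$ and $dq^i\wedge d\bar p^\beta_i$ coincides with the $\beta$-th scalar component of $\overline{\bm\omega}$ for each $\beta\le\ell$, so in the new coordinates
\[
\overline{\bm\tau}=\sum_{\beta=1}^{\ell}d\bar x^\beta\otimes e_\beta,\qquad \overline{\bm\omega}=\sum_{\beta=1}^{\ell}dq^i\wedge d\bar p^\beta_i\otimes e_\beta,
\]
and $\pi$ becomes the natural projection that drops the last $k-\ell$ entries of $\bar x$ and of $\bar p$.

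\textbf{Step 2 (projection and reduced structure).} The vertical distribution of $\pi$ is $\ker\T\pi=\langle\partial/\partial\bar x^\alpha,\partial/\partial\bar p^\alpha_i\rangle_{\alpha>\ell,\,i}$, and the coordinate expressions above immediately give $\iota_X\overline{\bm\tau}=0$ and $\iota_X\overline{\bm\omega}=0$ for every $X\in\ker\T\pi$. Closedness of both forms forces $\Lie_X\overline{\bm\tau}=\Lie_X\overline{\bm\omega}=0$, so both forms are $\pi$-basic and descend uniquely to closed forms $\bm\tau_\ell\in\Omega^1(M_\ell,\R^\ell)$ and $\bm\omega_\ell\in\Omega^2(M_\ell,\R^\ell)$ satisfying $\pi^*\bm\tau_\ell=\overline{\bm\tau}$ and $\pi^*\bm\omega_\ell=\overline{\bm\omega}$. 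Their local expressions on $M_\ell$ are the canonical $\ell$-cosymplectic ones; one checks directly that $\ker\bm\omega_\ell=\langle\partial/\partial\bar x^\beta\rangle_{\beta\le\ell}$ has rank $\ell$, that $\ker\bm\tau_\ell\cap\ker\bm\omega_\ell=0$, and that $V_\ell=\langle\partial/\partial\bar p^\beta_i\rangle_{\beta\le\ell,\,i}$ is a polarisation, turning $(M_\ell,\bm\tau_\ell,\bm\omega_\ell,V_\ell)$ into an $\ell$-cosymplectic manifold.

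\textbf{Step 3 (descent of $\Phi$).} Because $\Phi$ is $k$-cosymplectic, $\Phi_g^*\tau^\alpha=\tau^\alpha$ and $\Phi_g^*\omega^\alpha=\omega^\alpha$ for every $g\in G$, and linearity in $(c^\beta_\alpha)$ gives $\Phi_g^*\overline{\bm\tau}=\overline{\bm\tau}$ and $\Phi_g^*\overline{\bm\omega}=\overline{\bm\omega}$. The identity $\Phi_g^*(\iota_X\alpha)=\iota_{(\Phi_{g^{-1}})_*X}\Phi_g^*\alpha$ then shows that $(\Phi_g)_*$ preserves $\ker\overline{\bm\tau}\cap\ker\overline{\bm\omega}$, which a short coordinate computation identifies with $\ker\T\pi$. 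The fibres of $\pi$ are diffeomorphic to $\R^{k-\ell}\times(\T^*_qQ)^{k-\ell}$ and are therefore connected, so each $\Phi_g$ carries fibres of $\pi$ into fibres and descends to a unique smooth map $\Phi_{\ell,g}\colon M_\ell\to M_\ell$ with $\pi\circ\Phi_g=\Phi_{\ell,g}\circ\pi$; the action axioms for $\Phi_\ell$ follow from those of $\Phi$ together with surjectivity of $\pi$.

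The substantive technical point is the identification $\ker\overline{\bm\tau}\cap\ker\overline{\bm\omega}=\ker\T\pi$ produced by the basis change of Step 1, which makes both the projection of the forms in Step 2 and the descent of $\Phi$ in Step 3 automatic; all other verifications are standard or follow from direct inspection of the coordinate expressions above.
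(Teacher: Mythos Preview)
Your proof is correct and follows essentially the same strategy as the paper: choose adapted coordinates via the linear change $(c^\beta_\alpha)$, verify that $\overline{\bm\tau}$ and $\overline{\bm\omega}$ are $\pi$-basic and identify the descended canonical $\ell$-cosymplectic structure, and then show the action descends because it preserves $\ker\overline{\bm\tau}\cap\ker\overline{\bm\omega}=\ker\T\pi$. The one noteworthy difference is in Step~3: the paper argues infinitesimally, showing that $[\xi_{M_k},X]\in\ker\overline{\bm\tau}\cap\ker\overline{\bm\omega}$ for every fundamental vector field $\xi_{M_k}$ and every $X$ tangent to the fibres, so that the fundamental vector fields project; you instead work directly with the global diffeomorphisms $\Phi_g$, using that they preserve the fibre distribution and that fibres are connected to conclude that each $\Phi_g$ carries fibres to fibres. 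Your global argument is slightly cleaner for obtaining the actual Lie group action $\Phi_\ell$ on $M_\ell$ (the paper's infinitesimal route tacitly needs an integration step), while the paper's version makes the role of the Cartan calculus identities more explicit; both are valid routes to the same conclusion.
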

\begin{proof}
Let $\{x^\alpha,y^j,p^\alpha_j\}$ denote locally adapted coordinates to $(M_k,{\bm \tau}_k,{\bm \omega}_k)$, namely
\[
{\bm \omega}_k=\d y^j\wedge \d p_j^\alpha\otimes e_\alpha\,,\qquad {\bm \tau}_k=\d x^\alpha\otimes e_\alpha\,,\qquad V_k=\bigg\langle \frac{\partial}{\partial p^\alpha_j}\bigg\rangle_{\alpha=1,\ldots,k,\ j=1,\ldots,\dim Q}\,. 
\]
Since $\Phi^*_g{\bm \omega}_k={\bm \omega}_k$ for every $g\in G$, then $\ker {\bm \omega}_k=\left\langle\tparder{}{x^1},\ldots,\tparder{}{x^k}\right\rangle$ is $G$-invariant with respect to the lifted Lie group action of $\Phi$ to $\T M_k$. Furthermore, as $\Phi_g^*\bm\tau_k=\bm\tau_k$ for every $g\in G$, there exists a local linear Lie group action $\Phi^{k}: G\times \mathbb{R}^k\rightarrow \R^k$ whose space of orbits, around a point of $\mathbb{R}^k$, is a quotient space, $\mathbb{R}^k/E$, for a certain linear subspace $E\subset \mathbb{R}^k$. Hence, $\mathbb{R}^k/E$ has a natural structure of $\ell$-dimensional linear space. 

Since ${\bm \tau}_k$ is closed and  $\Lie_{\xi_{M_k}}{\bm \tau
}_k=0$ for every $\xi\in\mathfrak{g}$, it follows that $\iota_{\xi_{M_k}}{\bm \tau}_k$ is constant for each $\xi\in \mathfrak{g}$. Therefore, $\mathfrak{W}=\langle \tau^1,\ldots,\tau^k\rangle$ can be considered as a linear subspace of the dual, $W^*$, to the linear (over the reals) space $W$ of fundamental vector fields of the Lie group action $\Phi$. Hence, there exists a linear subspace $\mathfrak{A}\subset \mathfrak{W}$ consisting of the elements of $\mathfrak{W}$ vanishing on $W$. Let  $\{\overline\tau^1,\ldots,\overline\tau^\ell\}$ be a basis of $\mathfrak{A}$. Then, we can define $\overline{\tau}^\beta=c^\beta_{\,\alpha}\,\,\tau^\alpha$ and $\overline{\omega}^\beta=c^\beta_{\,\alpha}\omega^\alpha$ for some unique constants $c^\beta_{\,\alpha}$, where $\alpha=1,\ldots,k$ and $\beta=1,\ldots,\ell$. 
Note that $\overline{\bm\tau}\in\Omega^1(\R^k\times \bigoplus_{\alpha=1}^k\cT Q,\R^\ell)$ and $\overline{\bm\omega}\in\Omega^2(\R^k\times \bigoplus_{\alpha=1}^k\cT Q,\R^\ell)$. It follows that $\overline{\bm\tau}$ and $\overline{\bm\omega}$ are closed, since $\bm\tau_k$ and $\bm\omega_k$ are closed and the coefficients $c^\beta_{\,\alpha}$, with $\alpha=1,\ldots,k$ and $\beta=1,\ldots \ell$, are constants. There exist new local  adapted coordinates to $M_k$ obtained linearly from the previous ones, let us say $\{\bar x^\alpha=A^\alpha_\beta x^\beta, y^j,\bar p^\alpha_j=A^{\alpha}_{ \beta}p^\beta_j\}$ for a certain constant $(k\times k)$-matrix $A^\alpha_\beta$, such that
\[
\overline{\bm \omega}=\sum_{\beta=1}^\ell\d y^j\wedge \d \bar p^\beta_j\otimes e_\beta\,,\qquad \overline{\bm \tau}=\sum_{\beta=1}^\ell\d \bar x^\beta\otimes e_\beta\,.
\]
Note that $\ker\overline {\bm \tau}\cap \ker \overline{\bm \omega}$ is an integrable regular distribution on $M_k$ given by
$$
\ker \overline{\bm \tau}\cap \ker \overline{\bm \omega}=\bigg\langle\frac{\partial }{\partial \bar p^\alpha_j},\frac{\partial}{\partial \bar{x}^\alpha}\bigg\rangle_{\footnotesize \begin{array}{c}\alpha=\ell+1,\ldots,k,\\ j=1,\ldots,
\dim Q\end{array}}.
$$
The pair $(\overline{\bm \tau},\overline{\bm \omega})$ is not a $k$-cosymplectic structure on $M_k$, but a $k$-precosymplectic one (see \cite{Gra2020} for details).

The space   $\T_x M_k/\left(\ker\overline{\boldsymbol\tau}\cap\ker\overline{\boldsymbol{\omega}}\right)_x$ is diffeomorphic to $\T_{\pi(x)}M_\ell$ for  $x\in M_k$, where $M_\ell=\mathbb{R}^\ell\times \bigoplus_{\alpha=1}^\ell\cT Q$. Since  $\overline{\bm \tau}$ and $\overline{\bm \omega}$ vanish on the fundamental vector fields $\ker \overline{\bm\omega}\cap \ker \overline{\bm \tau}$ and are closed, they are projectable via the canonical projection $\pi:M_k\rightarrow M_\ell$ onto $M_\ell$ giving rise to an $\ell$-cosymplectic manifold $(M_\ell,\bm\tau_\ell,\bm\omega_\ell)$, where ${\bm \tau}_\ell$ and ${\bm \omega}_\ell$ are the only differential forms on $M_\ell$ taking values in $\mathbb{R}^\ell$ so that
\[
\pi^*{\bm \tau}_\ell=\overline{\bm \tau}\,,\qquad\pi^*{\bm \omega}_\ell=\overline{\bm \omega}\,.
\]
Let $\xi$ be any element of $\mathfrak{g}$. Moreover,  $\iota_{\xi_{M_k}}\overline{\bm \tau}=0$ and every $\xi\in \mathfrak{g}$. Then, for every vector field $X$ on $M_k$ taking values in $\ker \overline{\bm \tau}\cap\ker \overline{\bm \omega}$, one has 
$$
\iota_{[{\xi_{M_k}},X]}\overline{\bm \omega}=\Lie_{{\xi_{M_k}}}\iota_X\overline{\bm \omega}-\iota_X\Lie_{{\xi_{M_k}}}\overline{\bm \omega}=0
$$  
and, similarly, $\iota_{[\xi_{M_k},X]}{\overline{\bm\tau}}=0$. This implies that the fundamental vector fields of $\Phi$ project onto $M_\ell$ and give rise to a new Lie group action $\Phi^\ell: G\times M_\ell\rightarrow M_\ell$ equivariant to $\Phi$ relative to the canonical projection $\pi: M_k\rightarrow M_\ell$. 

\end{proof}

The previous procedure can potentially be continued by using an $\ell$-polycosymplectic momentum map ${\bf J}_\ell$ to perform an $\ell$-polycosymplectic Marsden--Weinstein reduction according to the results of previous sections. Note that the fundamental vector fields of $\Phi^\ell$ leave invariant ${\bm \omega}_{\ell}$, which means that, for each $\xi \in \mathfrak{g}$, we have $\iota_{\xi_{M_\ell}}{\bm \omega}_\ell=\d\bm h_\xi$ for a certain ${\bm h}_\xi$ and by construction $\iota_{\xi_{M_\ell}}{\bm \tau}_\ell=0$. Nevertheless, we have to impose that $R_{\ell\alpha}{\bf h_\xi}=0$ for $\alpha=1,\ldots,\ell$. Note that the latter is not satisfied in general: the initial $\Phi$ is just $k$-cosymplectic and it is not ensured that it admits a $k$-cosymplectic momentum map.

We shall now demonstrate how Theorem \ref{Th::kcotosco} induces some dynamics on the reduced space $M_\ell$ from one in $M_k$ satisfying certain conditions. Recall that, in the reductions of $k$-cosymplectic structures treated in this section, the last coordinates of ${\bm \tau}$ will be considered, without loss of generality, to be linearly independent as linear mappings on the fundamental vector fields of $\Phi$.

Before continuing, let us set and recall some notation used and to be employed.

\begin{center}
    \begin{tikzcd}
    \parbox{11cm}{\textrm{Original $k$-cosymplectic manifold and Hamiltonian $k$-cosymplectic $k$-vector field.}}
    &
    \big((M_k,{\bm \tau}_k,{\bm\omega}_k),\, {\bm X}^h\big)
    \arrow[dd,""]
    \\
    &
    \\
    \parbox{11cm}{\textrm{New $k$-cosymplectic manifold and Hamiltonian $k$-vector field obtained by making linear combinations of the components in $\mathbb{R}^k$ of ${\bm \omega}$ and ${\bm \tau}$ to ensure that the first $\ell$  components of $\widehat{\bm \tau}$ vanish on the fundamental vector fields of a $k$-cosymplectic Lie group action.}}
    &
    \big((M_k,\widehat{\bm\tau},\widehat{\bm \omega}), \,\widehat{\bm X}^h\big)
    \arrow[dd]
    \\
    &
    \\
    \parbox{11cm}{\textrm{An $\ell$-precosymplectic manifold and its $k$-vector field obtained by cutting the last $k-\ell$ components of the previous $k$-cosymplectic manifold and $k$-cosymplectic Hamiltonian $k$-vector field.}}
    &
    \big((M_k,\overline{\bm\tau},\overline{\bm \omega}), \,\overline{\bm X}^h\big)
    \arrow[dd,"\pi"]
    \\
    &
    \\
    \parbox{11cm}{\textrm{Projected $\ell$-cosymplectic manifold and its $\ell$-cosymplectic Hamiltonian $\ell$-vector field.}}
    &
    \big((M_\ell,{\bm\tau}_\ell,{\bm \omega}_\ell),\, {\bm X}^{h_\ell}\big)
    \end{tikzcd}
    \end{center}


\begin{theorem}\label{The::RedDynPolyCosym} Let $(M_k,{\bm \tau}_k,{\bm \omega}_k,V_k)$ be a $k$-cosymplectic manifold and let  $\Phi: G\times M_k\rightarrow M_k$ be an associated  $k$-cosymplectic Lie group action.  Assume that  $h\in \Cinfty(M_k)$ and  ${\bf X}^h$ is an associated $k$-cosymplectic Hamiltonian $k$-vector field  that is invariant relative to $\Phi$. Let us consider that $h$ is also invariant relative to the vector fields taking values in $\ker \overline{\bm \tau}\cap \T\mathbb{R}^k$ while the Lie bracket of any component of ${\bf  {X}}^h$ with any vector field taking values in $\ker\overline{\bm \tau}\cap\ker \overline{\bm \omega}$ takes values in the kernel of $\T\pi$. Let us suppose that $
\sum_{\alpha=\ell+1}^k[\widehat{X}^h_{\alpha}]^\alpha_i=0$ for $i=1,\ldots,\dim Q$. Then, there exists a function $h_\ell\in \Cinfty(M_\ell)$ such that ${\bf X}^{h_\ell}$ is the projection of $(\widehat{X}^h_1,\ldots,\widehat{X}^h_\ell)$ onto $M_\ell$ and $\pi^*h_\ell=h$ on a submanifold of constant values of the momenta $p^\alpha_i$ with $\alpha=\ell+1,\ldots,k$ and $i=1,\ldots,\dim Q$.  The $\ell$-vector field ${\bf X}^{h_\ell}$ is Hamiltonian relative to $(M_\ell,{\bm \tau}_\ell,{\bm \omega}_{\ell})$ and the solutions for the HDW equations of $h_\ell$ are solutions of the original HDW equations for constant associated momenta with $\alpha=\ell+1,\ldots,k$ for $\overline{\bm \tau},{\overline{\bm\omega}}$.
\end{theorem}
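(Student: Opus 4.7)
The plan is to work throughout in local coordinates $\{\bar x^\alpha, y^j, \bar p^\alpha_j\}$ adapted to the transformed $k$-cosymplectic structure $(\widehat{\bm\tau},\widehat{\bm\omega})$, so that $\widehat\tau^\alpha = \d\bar x^\alpha$, $\widehat\omega^\alpha = \d y^j\wedge \d\bar p^\alpha_j$, and the first $\ell$ components of $\widehat{\bm\tau}$ coincide with $\overline{\bm\tau}$. A direct computation in these coordinates shows that $\ker T\pi = \langle \partial/\partial\bar x^\alpha, \partial/\partial\bar p^\alpha_i\rangle_{\alpha>\ell}$ coincides with $\ker\overline{\bm\tau}\cap\ker\overline{\bm\omega}$, while $\ker\overline{\bm\tau}\cap T\mathbb{R}^k = \langle\partial/\partial\bar x^\alpha\rangle_{\alpha>\ell}$. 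The invariance of $h$ relative to $\ker\overline{\bm\tau}\cap T\mathbb{R}^k$ thus reads $\partial h/\partial \bar x^\alpha=0$ for $\alpha>\ell$. Fixing constants $c^\alpha_i\in\R$ with $\alpha=\ell+1,\ldots,k$ and $i=1,\ldots,\dim Q$, I would let $N\subset M_k$ be the submanifold defined by $\bar p^\alpha_i = c^\alpha_i$ for those indices; since $\pi|_N:N\to M_\ell$ is a local diffeomorphism and $h|_N$ does not depend on any of the fibre coordinates of $\pi|_N$, I would set $h_\ell = h|_N\circ (\pi|_N)^{-1}\in\Cinfty(M_\ell)$, which satisfies $\pi^*h_\ell = h$ on $N$.

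Next, I would establish the projectability of the first $\ell$ components $\widehat X^h_\beta$. Recall that a vector field $X$ on $M_k$ descends via $\pi$ iff $[X,Y]\in \ker T\pi$ for every $Y\in\ker T\pi$. Since $\ker T\pi=\ker\overline{\bm\tau}\cap\ker\overline{\bm\omega}$, the hypothesis on Lie brackets is exactly the condition for each $\widehat X^h_\beta$ to be $\pi$-projectable. Therefore each $\widehat X^h_\beta$ with $\beta\leq\ell$ descends to a unique vector field $Y_\beta\in\X(M_\ell)$, and I would define $\bm X^{h_\ell} := (Y_1,\ldots,Y_\ell)\in\X^\ell(M_\ell)$.

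To verify that $\bm X^{h_\ell}$ is Hamiltonian for $h_\ell$ relative to $(\bm\tau_\ell,\bm\omega_\ell)$, I would translate the HDW equations componentwise. From $\inn_{\widehat X^h_\beta}\widehat\tau^\gamma = \delta^\gamma_\beta$ one obtains $[\widehat X^h_\beta]^{\bar x^\gamma}=\delta^\gamma_\beta$, and restricting to $\beta,\gamma\leq\ell$ yields $\inn_{Y_\beta}\tau^\gamma_\ell=\delta^\gamma_\beta$. The relation $[\widehat X^h_\beta]^{y^j} = \partial h/\partial \bar p^\beta_j$ combined with the projectability (which forces $\partial h/\partial \bar p^\beta_j$ to be independent of $\bar p^\alpha_i$ for $\alpha>\ell$) implies that this derivative descends to $\partial h_\ell/\partial \bar p^\beta_j$ on $M_\ell$, so $[Y_\beta]^{y^j} = \partial h_\ell/\partial\bar p^\beta_j$. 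Finally, adding the original momentum equations $\sum_{\alpha=1}^k [\widehat X^h_\alpha]^{\bar p^\alpha_i} = -\partial h/\partial y^i$ and subtracting the supplementary hypothesis $\sum_{\alpha=\ell+1}^k[\widehat X^h_\alpha]^{\bar p^\alpha_i}=0$ give $\sum_{\beta=1}^\ell [Y_\beta]^{\bar p^\beta_i} = -\partial h_\ell/\partial y^i$ on $N$, which together assemble into $\inn_{\bm X^{h_\ell}}\bm\omega_\ell = \d h_\ell - (R^\ell_\beta h_\ell)\tau^\beta_\ell$, since in these coordinates $R^\ell_\beta=\partial/\partial\bar x^\beta$ and $R^\ell_\beta h_\ell = R_\beta h|_N$ for $\beta\leq\ell$.

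For the last assertion, any integral section $\psi_\ell\colon U\subset\R^\ell\to M_\ell$ of $\bm X^{h_\ell}$ lifts uniquely, via $(\pi|_N)^{-1}$, to a map $\psi\colon U\to N\subset M_k$ with $\bar p^\alpha_i\circ\psi = c^\alpha_i$ constant for $\alpha>\ell$; by construction of $\bm X^{h_\ell}$, this lift satisfies precisely the first $\ell$ of the HDW equations associated with $(\overline{\bm\tau},\overline{\bm\omega})$ for $h$. The main obstacle I anticipate lies in the bookkeeping: making sure that $\ker T\pi$ is globally (not only on a chart) identified with $\ker\overline{\bm\tau}\cap\ker\overline{\bm\omega}$, checking that the two invariance hypotheses together indeed force the second mixed derivatives $\partial^2 h/(\partial\bar p^\beta_j\partial\bar p^\alpha_i)$ to vanish for $\alpha>\ell$, $\beta\leq\ell$ (needed for $[Y_\beta]^{y^j}$ to be a well-defined function on $M_\ell$), and verifying that $h_\ell$ defined via $N$ is coordinate-independent up to the choice of the constants $c^\alpha_i$.
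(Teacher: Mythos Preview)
Your proposal is correct and follows essentially the same route as the paper: work in adapted coordinates $\{\bar x^\alpha, y^j, \bar p^\alpha_j\}$, use the Lie-bracket hypothesis to identify $\ker T\pi=\ker\overline{\bm\tau}\cap\ker\overline{\bm\omega}$ and establish projectability of $\widehat X^h_1,\ldots,\widehat X^h_\ell$, invoke the supplementary hypothesis $\sum_{\alpha>\ell}[\widehat X^h_\alpha]^\alpha_i=0$ to reduce the momentum equation, and define $h_\ell$ via restriction to the constant-momenta submanifold. The paper organises the computation slightly differently---it first derives the intrinsic relation $\sum_{\beta\le\ell}\iota_{\overline X^h_\beta}\overline\omega^\beta=\d_\ell h-\sum_{\alpha\le\ell}(\overline R_\alpha h)\overline\tau^\alpha$ on all of $M_k$ and then projects---but the content is the same; your concern about the mixed second derivatives $\partial^2 h/\partial\bar p^\beta_j\partial\bar p^\alpha_i$ is already settled by the projectability hypothesis, exactly as you note earlier in your argument.
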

\begin{proof}
Let $\widehat{c}_\alpha^\beta$ be the matrix of the change of bases mapping  $\{\tau^1,\ldots,\tau^k\}$ into the new  basis 
\[
\widehat{\tau}^1=\bar{\tau}^1,\ldots,\widehat{\tau}^\ell=\bar{\tau}^\ell,\widehat{\tau}^{\ell+1}=\tau^{\ell+1},\ldots,\widehat{\tau}^k={\tau}^k\,,
\]
and let $\widehat d_\alpha^\beta$ be the inverse matrix, namely $\tau^\alpha=\widehat d^\alpha_\beta \widehat\tau^\beta$, for $\alpha,\beta=1,\ldots,k$. Define a new Hamiltonian $k$-cosymplectic $k$-vector field $\widehat{\bf X}^h$ on $M_k$ relative to $(M_k,\widehat{\bm \tau}=\widehat{c}^\beta_\alpha \tau^\alpha\otimes e_\beta,\widehat{\bm \omega}=\widehat{c}_\alpha^\beta \omega^\alpha \otimes e_\beta)$ of the form
\[
{\widehat{X}}^h_\alpha=\widehat{d}_\alpha^\beta \,{X}^h_\beta\,,\qquad \alpha,\beta=1,\ldots,k\,.
\]
Since $\widehat c^\beta_\alpha$ is such that $\widehat c^\beta_\alpha=\delta^\beta_\alpha$ for $\beta=\ell+1,\ldots,k$ and $\alpha=1,\ldots,k$ by construction of  ${\widehat{\bm \tau}}$, then $\widehat{ d}^\beta_\alpha=\delta^\beta_\alpha$ for $\beta=\ell+1,\ldots,k$ and $\alpha=1,\ldots,k$. The relations between the new canonical coordinates in $\mathbb{R}^k\times\bigoplus_{\alpha=1}^k\cT Q$  and the previous ones are given by
\[
\widehat{x}^\beta=\widehat{c}^\beta_\alpha x^\alpha\,,\qquad \widehat{ p}_i^\beta=\widehat{c}^\beta_\alpha p_i^\alpha\,, \qquad \alpha,\beta=1,\ldots,k\,,\quad i=1,\ldots,\dim Q\,,
\]
while $q^1,\ldots,q^{\dim Q}$ are the same in the new and the old coordinate systems. 

If $\widehat{R}_\alpha=\widehat{d}^\beta_\alpha R_\beta$, it follows that 
\begin{equation}\label{eq:Com}
\iota_{\widehat{X}^h_\alpha}\widehat{\omega}^\alpha=\d h-(\widehat{R}_\alpha h)\widehat{\tau}^\alpha,\qquad \iota_{\widehat{X}_\alpha^h}\widehat{\tau}^\beta=\delta^\beta_\alpha.
\end{equation}

 It is worth noting that if $
\psi:s=(s^1,\ldots,s^k)\in \mathbb{R}^k\mapsto (x^\alpha(s),q^i(s),p_i^\alpha(s))\in \mathbb{R}^k\times \bigoplus_{\alpha=1}^k\cT Q$ is a solution to the HDW equations of the original ${\bf X}^h$, then the same $\psi$ is a solution for the HDW equations for $\widehat{\bf X}^h$ in the new coordinates  $
\psi:\widehat s=(\widehat s^1,\ldots,\widehat s^k)\in \mathbb{R}^k\mapsto (\widehat x^\alpha(\widehat s),q^i(\widehat s),\widehat p_i^\alpha(\widehat{s}))\in \mathbb{R}^k\times \bigoplus_{\alpha=1}^k\cT Q$ with $\widehat s^\beta=\widehat{c}^\beta_
\alpha s^\alpha$ for $\alpha,\beta=1,\ldots,k$, namely
\begin{equation}\label{eq::HDW}
\frac{\partial \widehat{x}^\beta}{\partial \widehat s^\alpha}=\delta^\beta_\alpha\,,\quad \frac{\partial q^i}{\partial \widehat s^\alpha}=\frac{\partial h}{\partial \widehat p^\alpha_i}\,,\quad \sum_{\alpha=1}^k\frac{\partial \widehat p^\alpha_i}{\partial \widehat s^\alpha}=-\frac{\partial h}{\partial q^i}\,,\qquad \alpha,\beta=1,\ldots, k\,,\quad i=1,\ldots,\dim Q\,.
\end{equation}

Then, let us show there is a new  $\ell$-vector field  $\overline{\bf X}^h$ on $\mathbb{R}^k\times\bigoplus_{\alpha=1}^k \cT Q$ related to $(\mathbb{R}^k\times \bigoplus_{\alpha=1}^k\cT Q, \overline{\bm \tau},\overline{\bm \omega})$ of the form
\[
\overline{\bf X}^h=\sum_{\alpha=1}^\ell\widehat{X}^h_{\alpha}\otimes e_\alpha\,,
\]
satisfying
\[
\iota_{\overline{\bf X}^h}\overline{\bm \omega}=\d_\ell h-\sum_{\alpha=1}^\ell(\overline{R}_\alpha h)\bar \tau^\alpha\,,\qquad \overline{R}_\alpha=\widehat{R}_\alpha\,,\qquad \alpha=1,\ldots,\ell\,,
\]
where $\d_\ell$ is the differential taking into account all canonical coordinates apart from  $\widehat{x}^\alpha$ and $\widehat{p}^\alpha_i$ for $\alpha=\ell+1,\ldots,k$ and $i=1,\ldots, \dim Q$.
If follows from \eqref{eq:Com} that
$$
\sum_{\beta=1}^\ell\iota_{\overline{X}^h_\beta }\overline{\omega}^\beta+\sum_{\beta=\ell+1}^k\frac{\partial h}{\partial \widehat{p}_i^\beta}\d \widehat{p}^\beta_i-\sum_{\beta=\ell+1}^k[\widehat{X}^h_\beta]^\beta_{i}\d q^i= \frac{\partial h}{\partial q^i}\d q^i+\sum_{\alpha=1}^\ell\frac{\partial h}{\partial \widehat{p}^\alpha_i}\d \widehat{p}^\alpha_i+\sum_{\alpha=\ell+1}^k\frac{\partial h}{\partial \widehat{p}^\alpha_i}\d \widehat{p}^\alpha_i\,.
$$
If we assume $\sum_{\beta=\ell+1}^k[{\widehat{X}}^h_\beta]^\beta_{i}=0$ for $i=1,\ldots\dim Q$, then
\begin{equation}\label{eq:proj}
\sum_{\beta=1}^\ell\iota_{\overline{X}^h_\beta }\overline{\omega}^\beta=\frac{\partial h}{\partial q^i}\d q^i+\sum_{\alpha=1}^\ell\frac{\partial h}{\partial \widehat{p}^\alpha_i}\d \widehat{p}^\alpha_i=\d_\ell h-\sum_{\alpha=1}^\ell(\overline{R}_\alpha h) \overline{\tau}_\alpha\,.
\end{equation}
In particular, the previous expression holds on the submanifold $S_\lambda$ for $\widehat p^\alpha_i=\lambda_i^\alpha$ for certain constants $\lambda_i^\alpha$,  with $\alpha=\ell+1,\ldots,k$ and $i=1,\ldots,\dim Q$. Note that the projection of this submanifold relative to  $\pi: M_k\rightarrow M_\ell$ is surjective and open. By the given assumptions, the restriction of $h$ to $S_\lambda$ is projectable onto a function $h_\ell$ on $M_\ell$. Since the Lie bracket of ${\bf X}^h$ with any vector field in $\ker \overline{\bm \tau}\cap \ker {\overline{\bm \omega}}$  belongs to the kernel of $\T \pi$, it follows that the same applies to ${ \widehat{\bf X}}^{h}$ and  $(\overline{X}^h_1,\ldots,\overline{X}^h_{\ell})$ is projectable onto $M_\ell$, which implies that the Lie derivatives of the $\overline{X}^h_1,\ldots,\overline{X}^h_\ell$ with $\partial/\partial{\bar{x}^{\ell+1}},\ldots,\partial/\partial \bar{x}^{k}$ and their associated momenta belong to the kernel of  $\T\pi$. And then, \eqref{eq:proj} projects onto $M_\ell$. Moreover, $\iota_{\overline{X}_\alpha}\overline{\tau}^\beta=\delta^\beta_\alpha$ for $\alpha,\beta=1,\ldots,\ell$. These facts show that the projection of $(\overline{X}^h_{1},\ldots,\overline{X}^h_{\ell})$ is Hamiltonian relative to the induced $\ell$-cosymplectic manifold $(M_\ell, {\bm \tau}_\ell,{\bm \omega}_\ell)$. The new local canonical variables are given by
\[
\bar{x}^\alpha=\widehat{x}^\alpha\,,\quad \bar{p}_i^\alpha = \widehat{p}_i^\alpha\qquad i = 1,\dotsc,\dim Q\,,\qquad \alpha=1,\ldots,\ell\,.
\]
It is recommendable to take a look at the HDW equations for the $\ell$-cosymplectic structure. They take the form
\begin{equation}\label{eq:polycosymplectic-hamiltonian-equations-coordinates-red}
        \parder{\bar x^\beta}{\bar s^\alpha} = \delta_\alpha^\beta\,,\quad
        \parder{q^i}{s^\alpha} = \parder{h_\ell}{\bar p_i^\alpha}\,,\quad
\sum_{\alpha=1}^\ell\parder{\bar p_i^\alpha}{\bar s^\alpha} = -\parder{h_\ell}{q^i} \,,\qquad \alpha,\beta=1,\ldots,\ell\,,\qquad i=1,\ldots,\dim Q\,.
\end{equation}
These are indeed the equations for the solutions to \eqref{eq::HDW} with constant $\bar{p}^\alpha_i$ with $\alpha=\ell+1,\ldots,k$ and such that $h$ does not depend on $\bar{x}_{\ell+1},\ldots, \bar{x}_k$. Hence, this allows us to reduce our problem.
\end{proof}

It is worth noting that if the fundamental vector fields of $\Phi$ are tangent to the manifolds $S_\lambda$, then the function $h_\ell$ will be again invariant relative to $\Phi$ and a standard $\ell$-polycosymplectic reduction will be available if the reduction of $\Phi$ satisfies certain additional conditions, e.g. that the reduction will have a momentum map.

Note that Theorem \ref{The::RedDynPolyCosym} gives some conditions ensuring that the $k$-cosymplectic Hamiltonian $k$-vector field $\widehat{\bf X}^h$ can be projected onto $M_\ell$. These conditions are not the most general ones for the above theorem to hold. For instance, one can assume that only the first $\ell$ components of $\widehat{\bf X}^h$ are projectable. 

Let us apply our techniques to a vibrating membrane having an exterior force that depends only on the radial distance. Let us recall that this system is determined by  the Hamiltonian function $\widetilde{h}\in\Cinfty(\mathbb{R}^3\times\bigoplus_{\alpha=1}^3\cT \mathbb{R})$ given by 
$$ \widetilde h(t,r,\theta,\zeta,p^t,p^r,p^\theta) = \frac{1}{2r}\left( (p^t)^2 - \frac{1}{c^2}(p^r)^2 - \frac{r^2}{c^2}(p^\theta)^2 \right) - r \zeta f(r)\,, $$
and the canonical three-cosymplectic structure on $\mathbb{R}^3\times\bigoplus_{\alpha=1}^3\cT \mathbb{R}$ given by
\[
{\bm \tau}=\d t\otimes e_1+\d r\otimes e_2+\d\theta\otimes e_3\,,\qquad {\bm \omega}=\d \zeta\wedge \d p^t\otimes e_1+\d \zeta\wedge \d p^r\otimes e_2+\d \zeta\wedge \d p^\theta\otimes e_3\,.
\]
A section 
$$
\psi:(t,r,\theta)\in \mathbb{R}^3\mapsto (t,r,\theta,\zeta(t,r,\theta),p^t(t,r,\theta),p^x(t,r,\theta),p^y(t,r,\theta))\in \R^3\times\textstyle\bigoplus_{\alpha=1}^3\cT \R=:M^v_3\,,
$$
becomes a solution of the HDW equations of the three-cosymplectic Hamiltonian three-vector field ${\bf X}^{\widetilde{h}}=(X_1^{\widetilde{h}},X_2^{\widetilde{h}},X_3^{\widetilde{h}})$ on $M^v_3$ given by 
\[
{X}^{\widetilde{h}}_1=\frac{\partial}{\partial t}+\frac{p^t}{r}\frac{\partial}{\partial \zeta},\quad
{X}^{\widetilde{h}}_2=\frac{\partial}{\partial r}-\frac{p^r}{c^2r}\frac{\partial}{\partial \zeta}+rf(r)\frac{\partial}{\partial p^r},\quad
{X}^{\widetilde{h}}_3=\frac{\partial}{\partial \theta}-\frac{rp^\theta}{c^2}\frac{\partial}{\partial \zeta}
\]
if
\begin{gather}
    \parder{p^t}{t} + \parder{p^r}{r} + \parder{p^\theta}{\theta} = rf(r)\,,\\
    \parder{\zeta}{t} = \frac{1}{r}p^t\,,\quad \parder{\zeta}{r} = -\frac{1}{rc^2}p^r\,,\quad \parder{\zeta}{\theta} = -\frac{r}{c^2}p^\theta\,.
\end{gather}
Combining these equations, one obtains the well-known equation
\[
\parder{^2\zeta}{t^2} - c^2\left( \parder{^2\zeta}{r^2} + \frac{1}{r}\parder{\zeta}{r} + \frac{1}{r^2}\parder{^2\zeta}{\theta^2} \right) = f(r)\,.
\]
 of a forced vibrating membrane in polar coordinates.  Let us assume a reduction of the space-time appearing in this problem. 
 
The Lie group action
\[
\Phi:\mathbb{R}^2\times M^v_3\ni(\lambda_1,\lambda_2;t,r,\theta,\zeta,p^t,p^r,p^\theta)\mapsto (t+\lambda_1,r,(\theta+\lambda_2)\!\!\!\!\mod 2\pi,\zeta,p^t,p^r,p^\theta)\in M^v_3\,,
\]
describes symmetries of $
\widetilde{h}$ and it is also  three-cosymplectic Lie group action, namely, it leaves invariant ${\bm \tau}, {\bm \omega}$, and their polarisation $V$.  
Then, the restriction of $\Phi$ to $\mathbb{R}^3$ reads
\[
\Phi_3:\R^2\times\R^3\ni (\lambda_1,\lambda_2;t,r,\theta)\mapsto(\lambda_1+t,r,(\theta+\lambda_2)\!\!\!\!\mod 2\pi)\in \R^3\,,
\]
and its space of orbits is diffeomorphic to $\R$. Recall that the existence of such a Lie group action was proved in the proof of Theorem \ref{The::RedDynPolyCosym}. In fact, its space of fundamental vector fields is
$$
D=\left\langle \frac{\partial}{\partial t},\frac{\partial}{\partial \theta}\right\rangle
$$
and the one-forms of $\langle \d r,\d t,\d \theta\rangle$ vanishing on it are $\langle \d r\rangle$. Hence, we have
$$
\overline{\bm \tau}=\d r,\qquad \overline{\bm \omega}=\d \zeta\wedge \d p^r.
$$
Note that ${\bf \widehat{X}}^{\widetilde{h}}=({\bf X}_2^{\widetilde{h}},{\bf X}_1^{\widetilde{h}},{\bf X}_3^{\widetilde{h}})$ and the HDW equations for ${\bf\widehat{ X}}^{\widetilde{h}}$ are the same as before (up to a reparametrization of the indexes of the variables in $\mathbb{R}^3$). 
Consider the submanifold $S_\lambda$ in $\R^3\times \bigoplus_{\alpha=1}^3\cT \R$ given by 
$$
p^t=\lambda_t,\quad p^\theta=\lambda_\theta,\qquad \lambda=(\lambda_t,\lambda_\theta)\in \mathbb{R}^2,
$$
for two constants $\lambda_t,\lambda_\theta\in \mathbb{R}$, which projects onto $\R^6$ diffeomorphically.
Note that there exists a function 
$$ k(r,\zeta,p^r) = \frac{1}{2r}\left( \lambda_t^2 - \frac{1}{c^2}(p^r)^2 - \frac{r^2}{c^2}\lambda_\theta^2 \right) - r \zeta f(r)\,, $$
whose pull-back to $\R^3\times \bigoplus_{\alpha=1}^3\cT\R$ matches the value of $\widetilde{h}$ on $S_\lambda$.

Note that $D$, namely the distribution spanned by the fundamental vector fields of $\Phi$, and  $\ker \overline{\bm \tau}\cap \ker \overline{\bm \omega}=\langle \partial/\partial t,\partial/\partial \theta,\partial/\partial p^t,\partial/\partial p^\theta\rangle$ is an  integrable distribution. Moreover, $\widetilde{h}$ is a first-integral of the vector fields of $\ker \overline{\bm \tau}\cap \T\mathbb{R}^k$. Additionally, one has that  $X_2^{\widetilde{h}2}+X_3^{\widetilde{h}3}=\widehat{X}_2^{\widetilde{h}2}+\widehat{X}_3^{\widetilde{h}}=0$. 

Then, one obtains that the space of leaves of $\Phi_3$ is diffeomorphic to $\mathbb{R}\times \cT \mathbb{R}$ and one has a projection
$$
\pi:(t,r,\theta,\zeta,p^t,p^r,p^\theta)\in\mathbb{R}^3\times \bigoplus_{\alpha=1}^3\cT \mathbb{R}\longmapsto (r,\zeta,p^r)\in \mathbb{R} \times   \cT \mathbb{R}.
$$
The Lie brackets of each of the components of $\bfX^h$ with vector fields of $\ker \overline{\bm \tau}\cap \ker \overline{\bm \omega}$ also belong to the kernel of $\T\pi$ and one obtains an induced $1$-cosymplectic structure
$$
(\mathbb{R} \times   \cT \mathbb{R},\d r,\d \zeta\wedge \d p^r).
$$
In fact, this is the canonical cosymplectic structure in the reduced manifold.  Then, the three-vector fields
$$
{X}^{\widetilde{h}}_1=\frac{\partial}{\partial t}+\frac{\lambda_t}{r}\frac{\partial}{\partial \zeta},\quad
{X}^{\widetilde{h}}_2=\frac{\partial}{\partial r}-\frac{p^r}{c^2r}\frac{\partial}{\partial \zeta}+rf(r)\frac{\partial}{\partial p^r},\quad
{X}^{\widetilde{h}}_3=\frac{\partial}{\partial \theta}-\frac{r\lambda_\theta}{c^2}\frac{\partial}{\partial \zeta}
$$
project  onto the quotient. The final HDW equations are
\begin{equation}
    \parder{p^r}{r} = rf(r)\,,\qquad \parder{\zeta}{r} = -\frac{p^r}{rc^2}\,.
\end{equation}
Combining these equations, the equation of a forced vibrating membrane in polar coordinates reads
$$ c^2\left( \parder{^2\zeta}{r^2} + \frac{1}{r}\parder{\zeta}{r}\right) = -f(r)\,. $$
These are the HDW equations obtained by assuming $p^\theta$ and $p^t$ to be constant in the initial HDW equations.
\section*{Conclusions and outlook}

We have found that $k$-polycosymplectic manifolds are equivalent to $k$-polysymplectic manifolds of a particular type: the here defined and analysed $k$-polysymplectic fibred manifolds. This relation is very interesting for studying geometric properties and developing techniques to study polycosymplectic manifolds via polysymplectic geometry as it shows that polycosymplectic geometry is a particular case of polysymplectic geometry.  Despite that, the relation may have limited interest in the study of their associated Hamiltonian systems. In particular, the equivalence relates manifolds of a different dimension, which may turn a problem related to a Hamiltonian system in a $k$-polycosymplectic manifold into a harder one in an associated $k$-polysymplectic fibred one of larger dimension. Notwithstanding, more research is necessary to determine how far this equivalence can be effectively used to study dynamical systems (see \cite{LMZ22} for similar investigations concerning symplectic and cosymplectic Hamiltonian systems and geometries). 

As a most relevant result, we have developed a $k$-polycosymplectic Marsden--Weinstein reduction procedure by proving that it can be understood as a $k$-polysymplectic Marsden--Weinstein reduction. This seems to close a problem that has been open for more than a decade now. Moreover, the $k$-polysymplectic reduction has been improved in several manners, e.g. by skipping the standard Ad-invariance of the momentum maps, and studied in particular cases, e.g. for $k$-polysymplectic fibred manifolds. Our $k$-polycosymplectic reduction has also been applied to physical problems. Although our work provides a $k$-polycosymplectic reduction by means of an improved $k$-polysymplectic reduction, it is still interesting to perform a $k$-polysymplectic reduction process without relying on a $k$-polysymplectic structure. Moreover, there exist other methods to extend the initial $k$-polycosymplectic manifold to develop a reduction via $k$-symplectic manifolds. 

An interesting approach to $k$-polycosymplectic Marsden--Weinstein reduction using other extension to polysymplectic manifolds was published a month after the first published version of our work in arXiv (see \cite{GM23}). Our work and \cite{GM23} are rather complementary, having different advantages. The results in \cite{GM23} are less general than ours in the sense that, for instance, they do not include our reductions in terms the space-time variables and we consider more general momentum-maps. Notwithstanding, \cite{GM23} noticed that the conditions in \cite{MRSV10} can be simplified, which is a significant advance, and it can further be included in our approach. The extension in \cite{GM23} to polysymplectic manifolds seems to have some nice properties too. We hope to merge the best ideas of our work and 
\cite{GM23} in a future paper.


Finally, we have developed a $k$-cosymplectic to $\ell$-cosymplectic manifold reduction as well as a related reduction for the HDW equations induced by a certain Hamiltonian function of the initial $k$-cosymplectic manifold. Our theory has been illustrated by a vibrating membrane. It is worth noting that, as far as we know, this is the first geometric reduction theory of $k$-cosymplectic manifolds that is able to reduce the base manifold of a field theory. In forthcoming works, we aim to look for new applications of our theory. Our results have many potential applications, but it demands many technical, but practical conditions. We aim to develop a more elegant and less technical description of our $k$-cosymplectic to $\ell$-cosymplectic reduction for the HDW equations given in Theorem \ref{The::RedDynPolyCosym}. Moreover, Theorem \ref{The::RedDynPolyCosym} can be generalised by assuming different conditions on the projectability of $\widehat{\bf X}^h$ or the components thereof. One of the ideas is to write conditions on a restriction of the initial $k$-vector field $\widehat{\bf X}^h$ or its components to $S_\lambda$ or another submanifold. We expect to study these problems in detail in the future. Moreover, we aim to investigate the reconstruction process, i.e. to analyse how the solutions of the reduced $\ell$-cosymplectic Hamiltonian problem on $M_\ell$ are related to the initial HDW equations on $M_k$.

Our main goal in the near future is to develop the Marsden--Weinstein multisymplectic reduction. In this sense, this paper is a good starting point since there are previous studies \cite{RRSV_2011} that allow us to relate $k$-polysymplectic and $k$-polycosymplectic structures to multisymplectic structures in related manifolds. Therefore, the results of this paper are a key point to approach the multisymplectic reduction.

\addcontentsline{toc}{section}{Acknowledgements}
\section*{Acknowledgements}
J. de Lucas, X. Rivas and B.M. Zawora acknowledge  financial support from the Novee Idee 2B-POB II project PSP: 501-D111-20-2004310 funded by the ``Inicjatywa Doskonałości - Uczelnia Badawcza'' (IDUB) program. X. Rivas acknowledges financial support from the Ministerio de Ciencia, Innovaci\'on y Universidades (Spain) project D2021-125515NB-21. B.M. Zawora would like to thank the UPC for his hospitality during the research stay that gave rise to the final form of this manuscript. S. Vilari\~no acknowledges partial financial support from Ministerio de Ciencia, Innovaci\'on y Universidades (Spain) project PID2021-125515NB-C22 and from DGA project E48\_20R.





\end{document}